\numberwithin{equation}{section}
\newtheorem{thm}{Theorem}[section]
\newaliascnt{corcnt}{thm}
\newaliascnt{propcnt}{thm}
\newtheorem{prop}[propcnt]{Proposition}
\newaliascnt{lemcnt}{thm}
\newtheorem{lem}[lemcnt]{Lemma}
\newaliascnt{factcnt}{thm}
\newtheorem{fact}[factcnt]{Fact}
\theoremstyle{remark}
\newaliascnt{remcnt}{thm}
\newtheorem{rem}[remcnt]{Remark}
\newaliascnt{excnt}{thm}
\newtheorem{example}[excnt]{Example}
\theoremstyle{definition}
\newaliascnt{defncnt}{thm}
\newtheorem{defn}[defncnt]{Definition}
\def\bfb{\mathbf{b}}
\def\bfa{\mathbf{a}}
\def\bfq{\mathbf{q}}
\def\bfp{\mathbf{p}}
\def\bfx{\mathbf{x}}
\def\bfy{\mathbf{y}}
\newcommand{\R}{\mathbb{R}}
\newcommand{\Q}{\mathbb{Q}}
\newcommand{\Z}{\mathbb{Z}}
\newcommand{\C}{\mathbb{C}}
\newcommand{\N}{\mathbb{N}}
\newcommand{\F}{\mathbb{F}}
\newcommand{\MC}{\mathcal{C}}
\newcommand{\A}{\mathcal{A}}
\newcommand{\MQ}{\mathcal{Q}}
\newcommand{\id}{\mathrm{id}}
\newcommand{\indx}{\operatorname{index}}
\newcommand{\co}{\mskip0.5mu\colon\thinspace}
\begin{document}

\title[Estimating the number of Reeb chords using linear
representations]{Estimating the number of Reeb chords using a linear
representation of the characteristic algebra}

\author{Georgios Dimitroglou Rizell}
\author{Roman Golovko}

\begin{abstract}
Given a chord-generic, horizontally displaceable Legendrian
submanifold $\Lambda\subset P\times \R$ with the property that its
characteristic algebra admits a finite-dimensional matrix
representation, we prove an Arnold-type lower bound for the number
of Reeb chords on $\Lambda$. This result is a generalization of the
results of Ekholm, Etnyre, Sabloff
and Sullivan, which
hold for Legendrian submanifolds whose Chekanov--Eliashberg algebras
admit augmentations. We also provide examples of Legendrian
submanifolds $\Lambda$ of $\C^{n}\times \R$, $n \ge 1$, whose
characteristic algebras admit finite-dimensional matrix
representations but whose Chekanov--Eliashberg algebras do not admit
augmentations. In addition, to show the limits of the method of
proof for the bound, we construct a Legendrian submanifold
$\Lambda\subset \C^{n}\times \R$ with the property that the
characteristic algebra of $\Lambda$ does not satisfy the rank
property. Finally, in the case when a Legendrian submanifold
$\Lambda$ has a non-acyclic Chekanov--Eliashberg algebra, using
rather elementary algebraic techniques we obtain lower bounds for
the number of Reeb chords of~$\Lambda$. These bounds are slightly
better than the number of Reeb chords  it is possible
 to achieve
with a Legendrian submanifold whose Chekanov--Eliashberg algebra is
acyclic.
\end{abstract}

\address{Department of Pure Mathematics and Mathematical
Statistics\\\newline
Centre for Mathematical Sciences\\
University of Cambridge\\\newline
Wilberforce Road\\
Cambridge CB3\,0WB\\
UK}
\email{g.dimitroglou@maths.cam.ac.uk}
\urladdr{http://www.dimitroglou.name/}

\address{Alfr\'{e}d R\'{e}nyi Institute of Mathematics\\
Hungarian Academy of Sciences\\\newline
Realtanoda u.\ 13--15\\
Budapest\\
H-1053\\
Hungary}
\email{golovko.roman@renyi.mta.hu}
\urladdr{http://sites.google.com/site/ragolovko/}

\date{\today}
\thanks{This work was partially supported by the ERC Starting Grant of Fr\'{e}d\'{e}ric Bourgeois StG-239781-ContactMath and by the ERC Advanced Grant LDTBud.}
\subjclass[2010]{Primary 53D12; Secondary 53D42}

\keywords{Legendrian contact homology, characteristic algebra,
linear representation, Arnold-type inequality}

\maketitle

\section{Introduction}
\subsection{Geometric background}
The {\em contactization of an exact symplectic $2n$--dimensional manifold}
$(P,d\theta)$ is a contact manifold $P\times \R$ equipped with the
contact structure $\xi:=\ker (dz+\theta)$, where $z$ is a coordinate
on~$\R$.
 Let $\Lambda$ be an $n$--dimensional submanifold of $P\times \R$. We
say that $\Lambda$ is a {\em Legendrian submanifold} if and only if
$T_x \Lambda\subset \xi_x$ for all $x\in \Lambda$. A smooth
$1$--parameter family of Legendrian submanifolds is called a {\em
Legendrian isotopy}. We will
always assume that $(P,d\theta)$
has finite geometry at infinity.

We note that if $\Lambda\subset P\times \R$ is a Legendrian
submanifold, then double points of the so-called {\em
Lagrangian projection} $\Pi_{L}\co P\times
\R\to P$ defined by $\Pi_{L}(x,z)=x$ correspond bijectively to integral
curves of
$\partial_z$ in $P\times \R$ having endpoints on $\Lambda$. The
vector field $\partial_{z}$ is the Reeb field of the contact form
$dz+\theta$ and its integral curves having endpoints on~$\Lambda$ are
called {\em Reeb chords on $\Lambda$}. The set of Reeb chords on $\Lambda$
will be denoted by $\MQ(\Lambda)$. We say that
$\Lambda$ is {\em chord-generic} if all self-intersections of the
Lagrangian immersion $\Pi_{L}(\Lambda)$ are transverse double
points, which, in the case when $\Lambda$ is closed, in particular implies
that $|\MQ(\Lambda)|<\infty$. Note that the chord-generic Legendrian
embeddings of a closed manifold form an open and dense subset of the space
of Legendrian embeddings.

From now on we assume that all Legendrian
submanifolds are closed, orientable, connected and chord-generic, unless
stated otherwise.

A Legendrian submanifold  $\Lambda\subset P\times \R$
is called {\em horizontally displaceable}
if the projection $\Pi_{L}(\Lambda)$
can be completely displaced off of itself by a Hamiltonian isotopy.

Observe that in the case $(P,d\theta)=(T^*M,d\theta_M)$, where $\theta_M$ is
the so-called \emph{Liouville form}, the bundle projection
$p \co
T^{\ast}\!M \to M$ induces the
natural projection $\Pi_F\co T^{\ast}\!M\times \R\to M\times \R$ defined by
$\Pi_{F}(x,z)= (p(x), z)$. This
projection is called the {\em front projection}.
As a special case, this also applies to the standard symplectic vector space
$(\C^n=T^*\R^n,-d(y_1dx_1+\cdots+y_ndx_n))$ (where we usually use minus the
Liouville~form).%

There are two ``classical'' invariants for a Legendrian submanifold $\Lambda
\subset \C^{n}\times \R$ which are invariant under Legendrian isotopy:
the {\em
Thurston--Bennequin number $\operatorname{tb}(\Lambda)$} and the {\em rotation
class $r(\Lambda)$}. The
Thurston--Bennequin number of a homologically trivial Legendrian
$\Lambda\subset P\times \R$ was first defined in the case $n=1$
by Bennequin \cite{EERDP}, and independently by Thurston, and then
extended to the case when $n\geq 1$ by Tabachnikov~\cite{AIOASTITTAD}.
 One may define it by
$\operatorname{tb}(\Lambda):=\mathrm{lk}(\Lambda,\Lambda')$, where
$\Lambda'$ is a sufficiently small push-off of~$\Lambda$ along the Reeb
vector field.
The rotation class $r(\Lambda)$ of a Legendrian submanifold
$\Lambda \subset P \times \R$ is defined as the homotopy class of the complex
bundle monomorphism $T\Lambda \otimes \C \to \xi \subset TP$ induced by the
differential of the inclusion (together with some choice of compatible almost
complex structure on $P$). We refer to Ekholm, Etnyre and Sullivan \cite{NILSIRTNPO} for more details.

{\em Legendrian contact homology} is a modern invariant of
Legendrian submanifolds in~$P\times\nobreak \R$. It is a variant of the
symplectic field theory introduced by Eliashberg, Givental and
Hofer \cite{ITSFT}. Independently, for Legendrian knots in
$\C\times \R$, it was defined by Chekanov \cite{DAOLL}. This
invariant associates a differential graded algebra (DGA) denoted by
$\A(\Lambda)$ to a Legendrian submanifold $\Lambda$, sometimes called the {\em
Chekanov--Eliashberg algebra of $\Lambda$}. $\A(\Lambda)$ is a non-commutative
unital differential graded algebra over a field $\F$ freely generated by
elements of $\MQ(\Lambda)$.  The differential $\partial(a)$ of a generator
$a \in \MQ(\Lambda)$ is given by a count of rigid pseudo-holomorphic disks
for some choice of compatible almost complex structure, and is then extended
using the Leibniz rule.

We will use the version of Legendrian contact homology for
Legendrian submanifolds of $P\times \R$, which was developed by
Ekholm, Etnyre and Sullivan \cite{LCHIPXR}. It was
shown there
that the homology of $(\A(\Lambda), \partial)$, the so-called {\em
Legendrian contact homology of~$\Lambda$}, is independent of the
choice of an almost complex structure and invariant under Legendrian
isotopy.

We now sketch the definition of the differential, as given in
\cite{LCHIPXR}. For a generic tame almost complex structure $J$ on $P$,
we define
\[
\partial(a)=
\mkern-6mu
\sum_{\dim \mathcal{M}_{a;\bfb ;A}(\Lambda;J)=0}\mkern-5mu
(-1)^{(n-1)(|a|+1)}\#(\mathcal{M}_{a;\bfb ;A}(\Lambda;J))\bfb ,
\]
where $\bfb =b_1 \cdots  b_m$ is a word of Reeb
chords. Here $\mathcal{M}_{a;\bfb ;A}(\Lambda;J)$ is the
moduli-space of $J$--holomorphic disks in $P$ having boundary on
$\Pi_L(\Lambda)$, a positive boundary puncture mapping to $a$,
negative boundary punctures mapping to $b_1,\ldots,b_m$ (in that
order relative
to the oriented boundary with the positive puncture
removed), and being in the relative homology class $A \in
H_2(P,\Pi_L(\Lambda))$. We refer to \autoref{sectrans} for the
definition of positive and negative punctures.

The {\em Maslov class} of a Legendrian submanifold $\Lambda$ is a
cohomology class $\mu(\Lambda )$
in $H^{1}(\Lambda;\Z)$ that assigns
to each $1$--dimensional homology class the Maslov index of a path
representing that class; see eg~\cite{NILSIRTNPO}. In the case when the
Maslov class vanishes, the Chekanov--Eliashberg algebra of a one-component
Legendrian submanifold has a canonical grading in $\Z$. In general,
the Chekanov--Eliashberg algebra of an oriented one-component Legendrian
submanifold has a canonical grading in $\Z_2$.

Given a Chekanov--Eliashberg algebra $(\A(\Lambda), \partial)$ over a field
$\F$, an {\em augmentation} of  $(\A(\Lambda), \partial)$
is a unital algebra chain map $\varepsilon\co(\A(\Lambda), \partial)\to (\F,
\partial=0)$, ie an algebra map satisfying
$\varepsilon(1)=1$ and $\varepsilon\circ \partial = 0$. If $\varepsilon(c)=0$
for $|c|\neq
0$, $c\in \MQ(\Lambda)$, we say that $\varepsilon$ is {\em graded}. If
$(\A(\Lambda), \partial)$ admits
an augmentation (which is not always the case), then we can follow the
linearization procedure due to Chekanov to produce a complex spanned as a
vector space by the Reeb chords. We first define a tame automorphism
$\sigma_{\varepsilon}\co (\mathcal A, \partial) \to (\mathcal A,
\partial)$ with $\sigma_{\varepsilon}(c)=c+\varepsilon(c)$. Then we
define $(\MC(\Lambda),
\partial^{\varepsilon}:=(\sigma_{\varepsilon}\circ\partial\circ\sigma_{\varepsilon}^{-1})^{}_1)$,
where $\MC(\Lambda)$ is the $\F$--vector space spanned by the elements of
$\MQ(\Lambda)$. The homology of $(\MC(\Lambda),
\partial^{\varepsilon})$ is called the {\em linearized Legendrian
contact homology of $\Lambda$}.

Observe that an augmentation can be
seen as a $1$--dimensional linear representation of $\A(\Lambda)$
which satisfies the additional property that $\varepsilon\circ\partial = 0$. A
$k$--dimensional linear representation $\rho\co \A(\Lambda)\to \mathrm
M_k(\F)$ is called  {\em graded}
if $\rho(c)=0$ for $|c|\neq\nobreak 0$,~$c\in\nobreak \MQ(\Lambda)$.
We will be
interested in such representations satisfying the additional condition
$\rho\circ\nobreak \partial =\nobreak 0$.

Assume that we are given a Legendrian submanifold $\Lambda\subset (P\times \R,
dz+\theta)$ with Chekanov--Eliashberg DGA $(\A(\Lambda),\partial)$. Ng
\cite{CLI} defined the so-called  {\em characteristic algebra
of $\Lambda$}, which is given by $\MC_{\Lambda}:=\A(\Lambda)/I$,
where $I$ denotes the two-sided ideal of $\A(\Lambda)$ generated by
$\{\partial(c)\}_{c\in \MQ(\Lambda)}$. If two
submanifolds $\Lambda$ and $\Lambda'$ are Legendrian isotopic, then
$\MC_{\Lambda}$ and $\MC_{\Lambda'}$ become isomorphic after stabilizations
by free products with suitable finitely generated free algebras, as follows
from \cite[Theorem 3.4]{CLI} together with \cite[Theorem 1.1]{LCHIPXR}.

Note that there is a one-to-one correspondence between
(graded) linear representations $\rho\co \A(\Lambda)\to \mathrm
M_{k}(\F)$ satisfying $\rho\circ \partial = 0$ and (graded) linear
representations $\MC_\Lambda\to\nobreak \mathrm M_{k}(\F)$. Again, \cite[Theorem
3.4]{CLI} together with \cite[Theorem 1.1]{LCHIPXR} shows the following
important invariance result:

\begin{prop}
\label{propinvariance}
The property of having a characteristic algebra admitting a (graded)
$k$--dimensional representation is invariant under Legendrian isotopy and
independent of the choice of almost complex structure used in the definition
of the Chekanov--Eliashberg algebra.
\end{prop}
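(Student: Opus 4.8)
The plan is to reduce the statement to an elementary observation about representations of free products of algebras, and then to invoke the stable-isomorphism invariance of the characteristic algebra recalled above. By the one-to-one correspondence between (graded) representations $\rho\co\A(\Lambda)\to\mathrm M_k(\F)$ satisfying $\rho\circ\partial=0$ and (graded) representations $\MC_\Lambda\to\mathrm M_k(\F)$, it suffices to prove that the property \emph{``$\MC_\Lambda$ admits a (graded) $k$--dimensional representation''} depends only on the Legendrian isotopy class of $\Lambda$ and not on the auxiliary almost complex structure.

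First I would isolate the following algebraic lemma: for any (graded) unital $\F$--algebra $A$ and any finitely generated free algebra $\mathcal F=\F\langle x_1,\dots,x_m\rangle$, the free product $A*\mathcal F$ admits a (graded) $k$--dimensional representation if and only if $A$ does. This is immediate from the universal property of the free product, which is the coproduct in the category of unital $\F$--algebras: a unital homomorphism $A*\mathcal F\to\mathrm M_k(\F)$ is the same datum as a unital homomorphism $A\to\mathrm M_k(\F)$ together with an arbitrary assignment of the generators $x_i$ to matrices in $\mathrm M_k(\F)$. Hence a representation of $A$ extends to $A*\mathcal F$ by sending each $x_i$ to $0$, which is automatically graded since $0$ lies in every graded piece of $\mathrm M_k(\F)$; conversely, any representation of $A*\mathcal F$ restricts along the canonical inclusion $A\hookrightarrow A*\mathcal F$ to a representation of $A$, and this inclusion is degree-preserving, so gradedness is inherited.

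With the lemma at hand the main argument is short. If $\Lambda$ and $\Lambda'$ are Legendrian isotopic, then by the invariance result recalled above, following from \cite[Theorem 3.4]{CLI} together with \cite[Theorem 1.1]{LCHIPXR}, there are finitely generated free algebras $\mathcal F$ and $\mathcal F'$ and a (grading-preserving) isomorphism $\MC_\Lambda*\mathcal F\cong\MC_{\Lambda'}*\mathcal F'$. Applying the lemma to each side, $\MC_\Lambda$ admits a (graded) $k$--dimensional representation if and only if $\MC_\Lambda*\mathcal F$ does, if and only if $\MC_{\Lambda'}*\mathcal F'$ does (these being isomorphic), if and only if $\MC_{\Lambda'}$ does. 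Independence of the chosen almost complex structure follows identically, since the same cited theorems furnish a stable isomorphism of this type between the characteristic algebras computed with respect to two generic tame almost complex structures on $P$.

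The only point requiring genuine care is the bookkeeping of gradings: one must verify that the stable isomorphism provided by the cited theorems is grading-preserving (valued in $\Z$ when $\mu(\Lambda)=0$ and in $\Z_2$ in general), and that the extra free generators introduced by the stabilization may indeed be sent to $0$ while respecting the condition $\rho(c)=0$ for $|c|\neq0$. As noted in the proof of the lemma, sending every free generator to $0$ is compatible with any grading requirement, so once the graded form of the stabilization is correctly quoted no further obstacle arises.
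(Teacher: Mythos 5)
Your proposal is correct and follows essentially the same route as the paper, which derives the proposition from the stable isomorphism $\MC_\Lambda*\mathcal F\cong\MC_{\Lambda'}*\mathcal F'$ furnished by Ng's Theorem 3.4 together with the invariance theorem of Ekholm--Etnyre--Sullivan; the paper leaves the free-product lemma implicit, whereas you spell it out via the universal property of the coproduct and the observation that extra free generators may be sent to $0$. This is exactly the intended argument, including the remark that gradedness (in the paper's sense of $\rho(c)=0$ for $|c|\neq 0$) is preserved both by restriction and by the zero extension.
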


The {\em front $S^1$--spinning} is a procedure defined by Ekholm,
Etnyre and Sullivan \cite{NILSIRTNPO} which, given a Legendrian
submanifold $\Lambda\subset \C^{n}\times \R$, produces a Legendrian
submanifold $\Sigma_{S^1}\Lambda\subset \C^{n+1}\times \R$
diffeomorphic to $\Lambda\times S^1$.  The second
author \cite{ANOTFSC} generalized this construction to a notion of {\em the front
$S^m$--spinning} for $m\in \mathbb N$. Given a Legendrian submanifold
$\Lambda \subset \C^{n}\times \R$, this construction produces a
Legendrian submanifold $\Sigma_{S^m}\Lambda\subset \C^{n+m}\times
\R$ diffeomorphic to $\Lambda\times S^m$. Observe that the front
$S^m$--spinning construction can be seen as a particular case of the
{\em Legendrian product} construction defined by Lambert-Cole
~\cite{LP}.

\subsection{Algebraic background}
In this section, we recall several basic definitions from non-commutative ring
theory that will become useful later.

\begin{defn}
A
ring $R$ satisfies
\begin{itemize}
\item
{\em the invariant basis number (IBN) property}  if the left-modules
$R^n$ and $R^m$ are isomorphic if and only if $m=n$;
\item {\em the rank property} if from the existence
of an epimorphism $R^n\to R^m$ of free left modules it follows that
$n\geq m$;
\item {\em the strong rank property} if from the
existence of monomorphism $R^n\to R^m$ of free left modules it
follows that $n\leq m$.
\end{itemize}
\end{defn}

The
following three remarks, which describe basic
properties of these three conditions,
can all be derived using
elementary algebra.
\vadjust{\goodbreak}

\begin{rem}\label{rhomoibnrcondpres}
Let $f\co R\to S$ be a (unital) ring homomorphism. If $S$ satisfies the rank
property (the IBN
property), then $R$ satisfies the
rank property (the IBN property). Unfortunately, the same principle does
not hold for the strong rank property in general.
\end{rem}

\begin{rem}\label{genpictimplibnrpstrrkpr}
For
a unital ring we have the implications
$$
\mbox{strong rank property}\ \implies \ \mbox{rank
property}\ \implies \ \mbox{IBN property}.
$$
In general, none of the reversed implications are true.
\end{rem}

\begin{rem}\label{nonibnpnontrp}
A
ring $R$ fails to satisfy the rank property if and
only if there exist natural numbers $m>n$ and matrices $A$, $B$ over
$R$ of sizes $m\times n$ and $n\times m$, respectively, such that
$AB=I_m$.
\end{rem}

\subsection{Results}
Many important techniques in symplectic and contact topology are
designed to capture quantitative properties of dynamical systems
induced by, say, a Hamiltonian flow or a Reeb flow. The problem of
finding closed orbits of a Hamiltonian vector field and a Reeb
vector field can in turn be related to the problems of finding
intersections between Lagrangian submanifolds and Reeb chords on
Legendrian submanifolds, respectively. Several examples of rigidity phenomena
have been established in different settings, where the numbers of such
intersection points, or chords, are strictly greater than the numbers
predicted by topology alone.
Often
 these estimates are proven using
different variants of Floer homology.

\subsubsection{The first lower bounds}
Gromov's theorem \cite[Theorem 2.3.$\mathrm{B}_1$]{PCISM}
 implies that any
horizontally displaceable Legendrian submanifold must have at least
one Reeb chord. Note that this estimate is already
 highly
non-trivial in general.

In even dimensions, there is a lower bound obtained from topology
alone. Assume that we are given a chord-generic, horizontally
displaceable, orientable Legendrian submanifold $\Lambda\subset
P\times \R$, where $\dim P=2n$ and $n=2k$ is even. The Whitney
self-intersection index of $\Pi_{L}(\Lambda)$ is equal to
$(-1)^{k+1} \frac{1}{2} \chi(\Lambda)$, as follows from purely
topological considerations; see Audin \cite[Proposition
0.4]{FNIEDDPDILEPTR} for $P=\C^n$. In particular, $\Pi_{L}(\Lambda)$
has at least
$\frac{1}{2}|\chi(\Lambda)|$ double points,
and hence $|\MQ(\Lambda)|\geq \frac{1}{2}|\chi(\Lambda)|$.
In the case when $P=\C^{2k}$, \cite[Proposition
3.2(2)]{NILSIRTNPO}
shows moreover that
$\operatorname{tb}(\Lambda)=(-1)^{k+1} \frac{1}{2} \chi(\Lambda)$.

\subsubsection{Arnold's inequality}
For some time it was expected that a horizontally displaceable,
chord-generic, Legendrian submanifold would have at least as many Reeb
chords as half the sum of its Betti numbers. This type of inequality
is in line with other conjectures due to Arnold \cite{FSIST}, and it is
usually referred to as \emph{Arnold's inequality} or an \emph{Arnold-type
inequality}. This inequality is however not fulfilled in general; we refer
to \autoref{secfailure} for references to counter-examples belonging to
the so-called flexible side of contact geometry.

On the other hand, it has been shown that the inequality is
fulfilled for certain natural Legendrian isotopy classes. In the
case when the Chekanov--Eliashberg algebra of $\Lambda\subset P\times
\R$ admits an augmentation, the inequality was originally proven by
Ekholm, Etnyre and Sullivan \cite{OILCHAELI}. A ``graded''
refinement of this result is also a consequence of the (considerably
stronger) duality result for linearized Legendrian contact homology
due to Ekholm, Etnyre and Sabloff \cite{ADESFLCH}. In this paper, we show
that the requirement of having an augmentation can be relaxed considerably
in the proof of the above
inequality.

\begin{thm}\label{mainthartinchlocbig}
Let $\Lambda\subset P\times \R$ be an $n$--dimensional horizontally
displaceable Legendrian submanifold which is chord-generic. If its
characteristic algebra $\MC_{\Lambda}$ admits a $k$--dimensional
representation $\rho\co \MC_{\Lambda}\to \mathrm{M}_{k}(\F)$ for some
field $\F$, then the following inequality holds:
\begin{equation}\label{weekarniineqbcfgagbn}
\frac{1}{2} \sum_{i\in I} b_i \le \sum_{i\in I} c_i,
\end{equation}
where $b_j:=\dim_{\F} H_j (\Lambda;\F)$, $c_j$ is
the number of Reeb
chords on $\Lambda$ of grading $j$ and $I\in\{ 2\Z,\, \Z\!\setminus\!
2\Z\}$.

In addition, if $\rho$ is a graded representation, then
the following refinement of the previous inequality holds:
\begin{equation}\label{arnineqmrepbnbad}
b_{i} \leq c_{i} + c_{n-i}
\end{equation}
for all $i$ satisfying $0\leq i\leq n$.
\end{thm}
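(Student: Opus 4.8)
The plan is to run the proof of Ekholm, Etnyre and Sullivan \cite{OILCHAELI} for the augmentation case, but to feed in the $k$--dimensional representation $\rho$ in place of a scalar augmentation, so that all coefficients become $k\times k$ matrices and a factor of $k^2$ cancels from the final estimate. First I would build a $\rho$--twisted linearized complex. Its underlying graded $\F$--vector space is $\MC(\Lambda)\otimes_\F \mathrm{M}_k(\F)$, which in degree $i$ has dimension $k^2 c_i$. Writing $\partial c=\sum_{\bfb}n_{c,\bfb}\,b_1\cdots b_m$, I define the twisted differential on a generator $c\otimes X$ by keeping one letter $b_j$ of each word undifferentiated and replacing the remaining letters by their matrix images, threading $\rho(b_1\cdots b_{j-1})$ and $\rho(b_{j+1}\cdots b_m)$ as left and right matrix factors. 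The identity $(\partial^\rho)^2=0$ then follows formally from $\partial^2=0$ together with the hypothesis $\rho\circ\partial=0$, exactly as the relation $\varepsilon\circ\partial=0$ forces $(\partial^\varepsilon)^2=0$ in the classical linearization; here one uses the correspondence, recorded before \autoref{propinvariance}, between representations of $\MC_\Lambda$ and representations of $\A(\Lambda)$ killing $\partial$.

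The geometric heart of the argument is horizontal displaceability. As in \cite{OILCHAELI}, a compactly supported Hamiltonian isotopy displacing $\Pi_L(\Lambda)$ off itself makes the relevant Lagrangian intersection Floer homology of $\Pi_L(\Lambda)$ vanish. I would then invoke the duality long exact sequence of Ekholm, Etnyre and Sabloff \cite{ADESFLCH}, relating the linearized contact homology, its dual, and the singular homology $H_*(\Lambda;\F)$, now carried out with matrix coefficients. With the Floer term killed by displaceability, exactness produces an injection of $H_i(\Lambda;\F)\otimes \mathrm{M}_k(\F)$ into the direct sum of the $\rho$--linearized homology in one degree and the dual of the $\rho$--linearized homology in the complementary degree. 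Since the linearized homology in degree $i$ has dimension at most the number $k^2 c_i$ of generators, and since dualizing preserves dimensions, cancelling the common factor $k^2$ yields the refined bound $b_i\le c_i+c_{n-i}$, once the grading conventions in the definition of $\partial^\rho$ are matched to those of \cite{ADESFLCH}. This handles \eqref{arnineqmrepbnbad} under the hypothesis that $\rho$ is graded, which is exactly what is needed for the complex to be $\Z$--graded.

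For the first inequality \eqref{weekarniineqbcfgagbn}, where $\rho$ need not be graded, only the $\Z_2$--grading of $\partial^\rho$ survives, so I would run the same displaceability-and-duality argument in the $\Z_2$--graded setting and fix a parity class $I$. The duality pairing identifies the parity-$I$ part of the linearized homology with the dual of its image, so each Reeb chord contributes to the parity-$I$ count both through the complex and through its dual; summing the resulting bound over $i\in I$ therefore produces $\sum_{i\in I}b_i\le 2\sum_{i\in I}c_i$, which after cancelling $k^2$ and dividing by $2$ is exactly \eqref{weekarniineqbcfgagbn}.

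The main obstacle I anticipate is not the formal algebra of $\partial^\rho$ but the transport of the two geometric inputs to matrix coefficients: one must check that the chain map coming from the displacing Hamiltonian isotopy and the chain-level duality isomorphism of \cite{ADESFLCH} remain well defined and compatible with the $\rho$--twist after the scalar augmentation is replaced by a representation that is only required to satisfy $\rho\circ\partial=0$ rather than to be a genuine chain map to $(\F,0)$. A secondary, purely bookkeeping difficulty is tracking the grading shift so that the complementary degree in the duality sequence comes out as $n-i$, and so that the parities match across the exact sequence in the $\Z_2$--graded version.
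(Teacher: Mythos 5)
Your proposal follows essentially the same route as the paper: the paper's proof is exactly the Ekholm--Etnyre--Sabloff two-copy/duality argument with the augmentation replaced by $\rho$, using $\rho\circ\partial=0$ to reduce the bimodule of mixed chords of $\Lambda\cup\Lambda'$ to a complex of free $\mathrm{M}_{k^2}(\F)$--modules, acyclicity from horizontal displaceability via the invariance proof, and the action filtration $\mathcal{P}\subset\mathcal{C}\oplus\mathcal{P}\subset\mathcal{Q}\oplus\mathcal{C}\oplus\mathcal{P}$ to produce the long exact sequence and the generator-count bounds. The one caveat is presentational rather than mathematical: the step you defer to ``invoking the duality sequence with matrix coefficients'' is precisely where the paper does its work, since no such matrix-coefficient sequence exists off the shelf and constructing it (via the two-copy link and the quotient complex of words with one mixed chord) constitutes the body of the proof.
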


\begin{rem} In the case when $\operatorname{char} \F \neq 2$, we must
make the additional assumption that $\Lambda$ is spin in order for its
Chekanov--Eliashberg algebra to be well-defined with coefficients in $\F$.
\end{rem}

To provide interesting examples of Legendrian submanifolds
satisfying the assumptions of the theorem we will proceed as follows. In the
literature, there are many computations of Chekanov--Eliashberg
algebras of Legendrian knots in $\C\times \R$. We provide
higher-dimensional examples by using the front $S^m$--spinning construction.
The Chekanov--Eliashberg algebra of the $S^1$--spinning
of a
$1$--dimensional Legendrian knot was computed in terms of the
Chekanov--Eliashberg algebra of the original knot
by Ekholm and K\'{a}lm\'{a}n~\cite{IOL1KAL2T}. In \autoref{techsectspuninvhom},
we provide a part of this computation in the general case. Even
though we do not compute the full Chekanov--Eliashberg algebra of the
$S^m$--spinning of a general Legendrian submanifold $\Lambda$, we are
able to establish the following result:

\begin{thm}\label{cormrftspgdsy}
A Legendrian submanifold $\Lambda \subset \C^n\times\R$ and its
$S^m$--spinning
$\Sigma_{S^m}\Lambda \subset \C^{n+m}\times\R$
satisfy the
following relations:
\begin{enumerate}
\item The Chekanov--Eliashberg algebra of $\Sigma_{S^m}\Lambda$ admits a
(graded) augmentation in a unital ring $R$ if and only if that
of $\Lambda$ admits a (graded) augmentation in $R$.
\item The characteristic algebra of $\Sigma_{S^m}\Lambda$ admits a
(graded) $k$--dimensional representation
if and only if that of $\Lambda$ admits a (graded) $k$--dimensional
representation.
\item The Chekanov--Eliashberg algebra of $\Sigma_{S^m}\Lambda$ is
acyclic if and only if that of $\Lambda$ is acyclic.
\end{enumerate}
\end{thm}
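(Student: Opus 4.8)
The plan is to deduce all three equivalences from the partial computation of $\A(\Sigma_{S^m}\Lambda)$ carried out in \autoref{techsectspuninvhom}. Write $A:=\A(\Lambda)$ and $B:=\A(\Sigma_{S^m}\Lambda)$, and let $V\subset A$ be the $\F$--span of the Reeb chords of $\Lambda$. The structural input I would extract from that computation (a generalization of the Ekholm--K\'alm\'an formula of \cite{IOL1KAL2T}) is the following: $B$ is free on two copies $\{a\}$ and $\{\hat a\}$ of the generators of $A$, with $|\hat a|=|a|+m$; the assignment $a\mapsto a$ defines a grading-preserving inclusion of differential graded algebras $\iota\co(A,\partial)\hookrightarrow(B,\partial^{\Sigma})$ onto the subalgebra on the unhatted chords, where $\partial^{\Sigma}a=\partial a$; the assignment $a\mapsto a$, $\hat a\mapsto 0$ defines a grading-preserving retraction of differential graded algebras $\pi\co(B,\partial^{\Sigma})\to(A,\partial)$ with $\pi\circ\iota=\id$; and on hatted generators $\partial^{\Sigma}\hat a=\widehat{\partial a}$, where $\widehat{(\cdot)}\co A\to B$ is the degree-$m$ derivation determined by $a\mapsto\hat a$. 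Everything below is a formal consequence of these facts.

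For parts (1) and (2) I would transport structures along $\pi$ and $\iota$. Recall that a (graded) representation of $\MC_{\Lambda}$ is the same as a (graded) unital algebra map $\rho$ out of $A$ with $\rho\circ\partial=0$, and likewise for augmentations with $\mathrm M_{k}(\F)$ replaced by $R$ and $k=1$. For the implication ``$\Lambda\Rightarrow\Sigma_{S^m}\Lambda$'' I set $\hat\rho:=\rho\circ\pi$; since $\pi$ is a map of differential graded algebras, $\hat\rho\circ\partial^{\Sigma}=\rho\circ\partial\circ\pi=0$, so $\hat\rho$ represents $\MC_{\Sigma_{S^m}\Lambda}$ (equivalently, the extra relations $\widehat{\partial a}$ lie in the ideal generated by the hatted chords, which $\pi$, and hence $\hat\rho$, annihilates). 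For the converse I set $\rho:=\hat\rho\circ\iota$ and use that $\iota$ is a map of differential graded algebras in the same way. In both directions the dimension $k$ is unchanged, and since $\iota$ and $\pi$ are grading-preserving the graded condition is preserved; the degree shift $|\hat a|=|a|+m$ plays no role because $\pi$ annihilates the $\hat a$. This settles (1) and (2).

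For part (3) the essential observation is that $\partial^{\Sigma}$ preserves the number of hatted letters in a word: $\partial^{\Sigma}a=\partial a$ contains none, while each term of $\partial^{\Sigma}\hat a=\widehat{\partial a}$ contains exactly one, so by the Leibniz rule $\partial^{\Sigma}$ respects the decomposition $B=\bigoplus_{p\ge 0}B^{(p)}$, where $B^{(p)}$ is spanned by the words with exactly $p$ hatted letters. Hence $(B,\partial^{\Sigma})=\bigoplus_{p}(B^{(p)},\partial^{\Sigma})$ as complexes, with $B^{(0)}=(A,\partial)$. Since a differential graded algebra is acyclic exactly when its homology vanishes, the implication ``$\Sigma_{S^m}\Lambda$ acyclic $\Rightarrow\Lambda$ acyclic'' is immediate, as $A=B^{(0)}$ is a direct summand of $B$. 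For the reverse implication I must show each $B^{(p)}$ is acyclic when $A$ is. Writing a word of hat-number $p$ uniquely as $w_{0}\hat a_{1}w_{1}\cdots\hat a_{p}w_{p}$ with $w_{i}\in A$ identifies $B^{(p)}\cong A\otimes V\otimes A\otimes\cdots\otimes V\otimes A$ ($p$ copies of $V$), and under this identification the differential is the one induced by $\partial$ on the $p$--fold tensor power over $A$ of the bimodule of noncommutative K\"ahler differentials $\Omega:=A\otimes V\otimes A\cong B^{(1)}$, the term $\widehat{\partial a}$ corresponding to the universal derivation applied to $\partial a$. The fundamental short exact sequence of the tensor algebra $A=T(V)$, namely $0\to A\otimes V\otimes A\to A\otimes A\to A\to 0$, is a short exact sequence of complexes; as $\F$ is a field it yields a long exact sequence in homology, and since $H_{*}(A\otimes A)\cong H_{*}(A)\otimes H_{*}(A)=0$ and $H_{*}(A)=0$ by acyclicity, we get $H_{*}(B^{(1)})=0$. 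The cases $p\ge 2$ follow by induction, using $B^{(p)}\cong\Omega\otimes_{A}B^{(p-1)}$ together with the same sequence.

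I expect the substantive content to sit in two places. The first, which I am taking as given, is the geometric derivation in \autoref{techsectspuninvhom} of the exact form of $\partial^{\Sigma}$ on the hatted generators; the clean statements for (1), (2) and the direct-sum splitting in (3) all hinge on there being no additional terms beyond $\widehat{\partial a}$. The second, and the main genuinely algebraic obstacle, is the acyclicity bookkeeping for all $p\ge 1$ in part (3): one must match the twisted differential on $B^{(p)}$ with the K\"ahler-differential differential precisely, and control the homological algebra of the iterated tensor powers $\Omega^{\otimes_{A}p}$ over $\F$, where finiteness of $\MQ(\Lambda)$ and the boundedness of the grading in each degree guarantee that the K\"unneth and long-exact-sequence arguments behave well. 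Parts (1) and (2), by contrast, are purely formal once the retraction $\pi$ and the inclusion $\iota$ are in hand.
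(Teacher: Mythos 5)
Your arguments for parts (1) and (2) are correct and coincide with the paper's: once the unital DGA morphisms $\iota$ and $\pi$ of \autoref{maininclinviig} are available, augmentations and representations killing the differential are transported by precomposition in both directions, and the graded versions follow because both maps preserve degree.

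Part (3), however, rests on an input that the paper does not provide and explicitly disclaims. You assume the full differential of $\A(\Sigma_{S^m}\Lambda)$ is known, namely that $\partial^{\Sigma}\hat a=\widehat{\partial a}$ with no further terms, so that the number of hatted letters is preserved and the algebra splits as a direct sum of complexes $B^{(p)}$. The paper states in \autoref{techsectspuninvhom} that it does \emph{not} compute the full Chekanov--Eliashberg algebra of the spinning; the Ekholm--K\'alm\'an formula is only established for $m=1$ and $\dim\Lambda=1$. The transversality analysis of \autoref{lemindex} and \autoref{lemunbroken} controls only those configurations with positive puncture in $\mathcal{Q}_N$ and \emph{all} negative punctures in $\mathcal{Q}_S$ --- exactly enough to show that $\partial_L$ preserves the two-sided ideal $\langle\mathcal{Q}_N\rangle$, i.e.\ that $\pi$ is a chain map, but not enough to exclude terms of $\partial_L(\hat a)$ containing two or more hatted letters (which would already destroy your hat-number decomposition) or to identify the hat-number-one part with the universal derivation applied to $\partial a$. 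Fortunately none of your K\"ahler-differential machinery is needed: a unital DGA is acyclic if and only if its unit is a boundary (if $1=\partial x$, then every cycle $c$ satisfies $c=\pm\partial(cx)$), and this property is transported in both directions by the unital chain maps, since $1=\partial_{\Lambda}(x)$ gives $1=\iota(1)=\partial_{L}(\iota(x))$, and conversely using $\pi$. Replacing your argument for (3) by this observation closes the gap and recovers the paper's intended ``elementary algebraic considerations.''
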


Sivek \cite{TCHOLKWMTBI} has provided examples of Legendrian
knots in $\C\times \R$ whose characteristic algebras admit
$2$--dimensional,
 but not $1$--dimensional, representations over
$\Z_2$. In \autoref{exfindimreprbnaug}, we apply the front
$S^m$--spinning construction to these knots and other knots
constructed from them and, using \autoref{cormrftspgdsy}, we prove
the following:

\begin{thm}\label{applsptwtherffdrbnaugalk}
Fix a product $S:=S^{m_1}\times \dots \times S^{m_s}$ of spheres.
There exists an infinite family of Legendrian embeddings $\Lambda_i
\subset \C^{1+m_1+\cdots+m_s} \times \R$, $i=1,2,\ldots{}$, of $S^1
\times S$ which are in pairwise different Legendrian isotopy classes
and whose characteristic algebras all admit finite-dimensional
matrix representations over $\Z_2$, but whose Chekanov--Eliashberg
algebras do not admit augmentations to unital commutative rings.
In particular, \autoref{mainthartinchlocbig} can be applied to
these Legendrian submanifolds.
\end{thm}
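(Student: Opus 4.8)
The plan is to construct the family $\Lambda_i$ by combining three ingredients: Sivek's knots, a connected-sum-type operation to produce infinitely many distinct isotopy classes, and the front $S^m$-spinning construction to pass to higher dimensions. First I would take one of Sivek's Legendrian knots $K\subset\C\times\R$ whose characteristic algebra admits a $2$-dimensional representation over $\Z_2$ but no $1$-dimensional representation (equivalently, no augmentation over $\Z_2$). The key point is that having a finite-dimensional matrix representation is a much weaker requirement than admitting an augmentation to a commutative ring; I would argue that the obstruction to augmentations in a commutative ring persists under the operations below, while the $2$-dimensional representation survives.

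Next I would produce an infinite family in dimension one before spinning. The natural device is to take Legendrian connected sums (or stabilizations/tensor-type products) $K_i$ of the basic knot $K$ with itself or with a growing family of auxiliary knots, arranged so that the classical invariants $\operatorname{tb}$ and $r$, or some other Legendrian-isotopy invariant, separate the $K_i$ into pairwise distinct classes. The behavior of the characteristic algebra under such an operation should be that a $2$-dimensional representation of the pieces assembles into a finite-dimensional representation of $K_i$, whereas the nonexistence of augmentations to commutative rings is inherited from the $K$-factor. I would verify that this operation neither creates an augmentation nor destroys the finite-dimensional representation; this bookkeeping about how representations and augmentations behave under the chosen operation is where care is needed, and I would lean on \autoref{propinvariance} for the invariance of the representation-admitting property.

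I would then apply the front $S^m$-spinning construction iteratively, $S^{m_1}$ then $S^{m_2}$, and so on, to each $K_i$, obtaining Legendrian embeddings $\Lambda_i\subset\C^{1+m_1+\cdots+m_s}\times\R$ diffeomorphic to $S^1\times S^{m_1}\times\cdots\times S^{m_s}=S^1\times S$. The crucial input is \autoref{cormrftspgdsy}: part (2) gives that $\Sigma_{S^m}\Lambda$ admits a $k$-dimensional representation if and only if $\Lambda$ does, so each $\Lambda_i$ inherits the finite-dimensional representation from $K_i$; and part (1), with $R$ ranging over unital commutative rings, shows that $\Sigma_{S^m}\Lambda$ admits an augmentation to such an $R$ if and only if $\Lambda$ does, so the nonexistence of commutative-ring augmentations is likewise preserved. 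That the $\Lambda_i$ remain in pairwise distinct Legendrian isotopy classes would follow from the fact that the spun invariants still distinguish them, using either the invariance of linearized or non-linearized contact homology data under spinning established in \autoref{techsectspuninvhom}, or an argument that spinning is injective on isotopy classes.

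The main obstacle I expect is the second step: ensuring that the chosen infinite family $K_i$ simultaneously (a) retains a finite-dimensional matrix representation of its characteristic algebra, (b) genuinely fails to admit any augmentation to \emph{any} unital commutative ring, not merely over $\Z_2$, and (c) consists of pairwise non-isotopic knots. Points (a) and (c) are relatively standard, but (b) is delicate because one must rule out augmentations to \emph{all} commutative rings at once; the natural strategy is to show that an augmentation to a commutative ring $R$ would force a $1$-dimensional representation of the characteristic algebra over some field (a quotient of $R$), contradicting Sivek's computation that no such representation exists. Verifying that this contradiction is stable under the connected-sum operation is the technical heart of the argument, and the rest reduces to invoking \autoref{cormrftspgdsy} and \autoref{propinvariance}.
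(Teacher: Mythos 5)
Your overall architecture matches the paper's: start from Sivek's knots $\Lambda_{p,-q}$ (optionally connect-summed with augmentable knots such as $T_{2,k}$, as in \autoref{rzeconsumlegknofun} and \autoref{ecexampleconsmnbnto}), apply iterated front spinning, and invoke \autoref{cormrftspgdsy} to transport both the finite-dimensional representation and the non-existence of augmentations. Your strategy for ruling out augmentations into \emph{all} unital commutative rings --- quotient by a maximal ideal to reduce to a field, then use the strictness of the Kauffman bound --- is also exactly the paper's argument (\autoref{propkauffbound} combined with part (2) of \autoref{rzeconsumlegknofun}).

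The one step that would fail as you describe it is distinguishing the spun Legendrians $\Lambda_i$ from one another. You propose to use either ``the invariance of linearized or non-linearized contact homology data under spinning'' or ``an argument that spinning is injective on isotopy classes.'' Neither is available: linearized contact homology requires an augmentation, which these examples by construction do not admit; the full DGA or characteristic algebra is not computed finely enough here to separate the members of the family; and injectivity of spinning on Legendrian isotopy classes is not established (nor needed). The paper instead arranges the one-dimensional family so that the Maslov classes are pairwise distinct --- since $\mu(\Lambda_{p,-q})=2(q-p)$, one fixes $p$ and varies $q$, so no connected sums are even required to generate the infinitude --- and then uses Lambert-Cole's computation of the Maslov class of a Legendrian product to conclude that $\mu(\Lambda_{l_1})\neq\mu(\Lambda_{l_2})$ persists after spinning, which already forces pairwise distinct isotopy classes. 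Replacing your distinguishing device with this classical invariant closes the gap; the rest of your outline is sound.
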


Observe that the condition that there exists a finite-dimensional
representation of $\mathcal C_{\Lambda}$ over a field $\F$ is a
natural condition that can be used to define the notion of rank for
free $\MC_{\Lambda}$--$\MC_{\Lambda'}$--bimodules in the case when
$\Lambda'$ is Legendrian isotopic to $\Lambda$. It also allows us to
write rank inequalities for long exact sequences of such free
finite-rank $\MC_{\Lambda}$--$\MC_{\Lambda'}$--bimodules. These rank
inequalities play a crucial role in the proof of
\autoref{mainthartinchlocbig}. We do not know if this
restriction on $\MC_{\Lambda}$ can be weakened. On the other hand,
in \autoref{arglimsecttomonbg} we show that there are
examples of Legendrian submanifolds of $\C^{n}\times \R$ with a
``wild'' behavior of $\MC_{\Lambda}$. More precisely, there are
$n$--dimensional Legendrian submanifolds~$\Lambda$ with non-trivial
$\MC_\Lambda$ for which there exists a monomorphism (epimorphism)
from a free $\MC_{\Lambda}$--$\MC_{\Lambda'}$--bimodule of rank $k$ to a
free $\MC_{\Lambda}$--$\MC_{\Lambda'}$--bimodule of rank $l$, where $k>
l$~($k< l$). In other words, we prove the following:

\begin{thm}\label{hdbexwithoutthestrrnkpr}
Given any $n\geq 1$, there exists a Legendrian submanifold $\Lambda\subset
\C^n\times\R$
whose characteristic algebra $\MC_{\Lambda} \neq 0$ does not satisfy the
rank property.
\end{thm}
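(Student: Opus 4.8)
The plan is to realize, for each $n\ge 1$, a Legendrian $\Lambda\subset\C^n\times\R$ whose characteristic algebra contains an explicit ``Leavitt-type'' relation forcing the failure of the rank property. By \autoref{nonibnpnontrp} it suffices to produce $\Lambda$ with $\MC_\Lambda\neq 0$ together with rectangular matrices $A$ of size $m\times n$ and $B$ of size $n\times m$ over $\MC_\Lambda$, with $m>n$, satisfying $AB=I_m$. I would aim for the smallest case, $n=1$ and $m=2$: I want Reeb chords $a_1,a_2,b_1,b_2$ of grading $0$ so that, setting $A=\begin{pmatrix}a_1\\a_2\end{pmatrix}$ and $B=\begin{pmatrix}b_1&b_2\end{pmatrix}$, the four entries of $AB-I_2$ lie in the differential ideal, i.e.\ $a_1b_1=a_2b_2=1$ and $a_1b_2=a_2b_1=0$ hold in $\MC_\Lambda$. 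The two diagonal relations force the \emph{unit} to appear; by \autoref{rhomoibnrcondpres}, since every matrix algebra $\mathrm{M}_{k}(\F)$ satisfies the rank property, such a $\MC_\Lambda$ can admit no finite-dimensional representation whatsoever. This is precisely the ``wild'' behavior advertised, and the reason the method behind \autoref{mainthartinchlocbig} cannot apply here.

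First I would carry out the construction in dimension $n=1$. I seek a Legendrian knot $\Lambda_0\subset\C\times\R$ with Reeb chords $a_1,a_2,b_1,b_2$ of grading $0$ and four further chords $r_1,\dots,r_4$ of grading $1$ whose differentials are $\partial r_1=1-a_1b_1$, $\partial r_2=1-a_2b_2$, $\partial r_3=a_1b_2$, $\partial r_4=a_2b_1$, with $\partial a_i=\partial b_j=0$. The crucial feature is the appearance of \emph{constant terms} in $\partial r_1,\partial r_2$, which come from rigid holomorphic disks with a single positive puncture and no negative puncture; these are the disks responsible for the possible failure of augmentations. I would realize such differentials by building an explicit front out of standard local pieces --- a ``dip''/``splash'' producing the constant term, together with clasp-like pieces producing the quadratic terms $a_ib_j$ --- and computing the disk count combinatorially. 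Granting this, the differential ideal $I$ is generated by $1-a_1b_1$, $1-a_2b_2$, $a_1b_2$, $a_2b_1$, so that $AB=I_2$ holds in $\MC_{\Lambda_0}$. To certify $\MC_{\Lambda_0}\neq 0$ I would exhibit the faithful infinite-dimensional representation on the vector space with basis indexed by finite words in $\{1,2\}$, where $b_i$ prepends the letter $i$ and $a_i$ deletes a leading $i$ and annihilates words not beginning with $i$; this representation satisfies the relations and is nonzero, so $1\notin I$ and $\MC_{\Lambda_0}$ fails the rank property.

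For general $n>1$ I would propagate the one-dimensional example upward by the front $S^{n-1}$--spinning, forming $\Sigma_{S^{n-1}}\Lambda_0\subset\C^n\times\R$. Rather than invoke only \autoref{cormrftspgdsy}(2) --- which transfers the absence of finite-dimensional representations but is not by itself enough to conclude failure of the rank property --- I would use the partial computation of the Chekanov--Eliashberg algebra of a spinning carried out in \autoref{techsectspuninvhom}. That computation identifies the chords $a_i,b_j$ of $\Lambda_0$ with chords of $\Sigma_{S^{n-1}}\Lambda_0$ and controls their differentials well enough to show that the relations $AB=I_2$ persist in $\MC_{\Sigma_{S^{n-1}}\Lambda_0}$ and that this algebra remains nonzero. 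I expect that the spun characteristic algebra differs from $\MC_{\Lambda_0}$ only by a free product with a free algebra, under which both the matrix identity and nonvanishing are preserved; in any case, $\Sigma_{S^{n-1}}\Lambda_0$ then provides the required example in every dimension.

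The hard part will be the dimension-one geometric realization: producing an honest Legendrian front whose rigid disk count yields exactly the prescribed differentials --- in particular the constant terms in $\partial r_1,\partial r_2$ --- while keeping $1\notin I$. Once the local front pieces are assembled and the disk bookkeeping is verified (with signs checked, in the case $\operatorname{char}\F\neq 2$, using that the relevant pieces may be taken spin), the algebraic consequences above are immediate.
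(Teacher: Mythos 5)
Your reduction to exhibiting $a_1,a_2,b_1,b_2\in\MC_{\Lambda}$ with $a_ib_j=\delta_{ij}$ is sound (this is exactly \autoref{nonibnpnontrp} with $m=2$, $n=1$), and your word-space representation does certify $\MC_{\Lambda_0}\neq 0$ for the abstract quotient algebra. The genuine gap is the base case: you never produce the Legendrian knot. The entire construction rests on the claim that there is a front whose Chekanov--Eliashberg differential is exactly $\partial r_1=1-a_1b_1$, $\partial r_2=1-a_2b_2$, $\partial r_3=a_1b_2$, $\partial r_4=a_2b_1$ with $\partial a_i=\partial b_j=0$, and this is asserted rather than proved. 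Realizing a prescribed free DGA as the DGA of an actual Legendrian knot is not a routine matter of assembling ``dips'' and ``clasps'': the disk counts of such local pieces produce many additional terms and chords, the gradings of the chords are constrained by $\operatorname{tb}$ and the rotation number, and there is no general realization theorem you can invoke. As written, the ``hard part'' you defer is the whole theorem in dimension one.

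The paper avoids this realization problem entirely: it takes an example already computed in the literature, Sivek's Legendrian representative $\Lambda_{m(10_{132})}$, in whose characteristic algebra one has explicit elements with $xy=1$ and $p(1-yx)q=1$, and observes (this is \autoref{factnonrprop}) that then
\[
\begin{pmatrix} x\\ p(1-yx)\end{pmatrix}\begin{pmatrix} y & (1-yx)q\end{pmatrix}=I_2 .
\]
Note that this produces precisely the Leavitt-type relations you were aiming for, but as \emph{derived} elements of a known characteristic algebra rather than as generators of a designed one; nonvanishing of $\MC_{\Lambda_{m(10_{132})}}$ is likewise part of the existing computation. Your treatment of the higher-dimensional step is essentially the paper's: the inclusion $\iota$ of \autoref{maininclinviig} is a unital DGA morphism, hence induces a unital algebra map $\MC_{\widetilde{\Lambda}}\to\MC_{\Sigma_{S^{m_s}}\cdots\Sigma_{S^{m_1}}\widetilde{\Lambda}}$, and \autoref{rhomoibnrcondpres} pushes the failure of the rank property forward along any such map, so there is no need to check separately that the relations ``persist.'' If you repair the base case by citing a computed example (or by actually carrying out the front construction and disk count), the rest of your argument goes through.
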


This
result shows that it is unlikely that the method in the proof of
\autoref{mainthartinchlocbig} can be used to prove an Arnold-type inequality
for a general horizontally displaceable Legendrian submanifold having a
Chekanov--Eliashberg algebra which is not acyclic.

\subsubsection{A failure of Arnold's inequality}
\label{secfailure}
There are examples of horizontally displaceable Legendrian submanifolds
$\Lambda \subset P \times \R$ for which Arnold's inequality is not
satisfied. Observe that all known examples have an acyclic Chekanov--Eliashberg
algebra.

The first example of a Legendrian submanifold for which Arnold's
inequality does not hold was provided by Sauvaget \cite{CLED4},
who constructed a genus-two Legendrian surface in $\C^2\times
\R$ having only one transverse Reeb chord.

Loose Legendrian submanifolds is a class of Legendrian submanifolds
defined by Murphy \cite{LLEIHDCM},
who showed they
satisfy an
h-principle. Since these submanifolds belong to
the flexible domain of contact geometry, one does not expect them to satisfy
any rigidity phenomena. Observe that the
Chekanov--Eliasbherg algebra of a loose Legendrian submanifold is acyclic
(with or without Novikov coefficients).
Using this h-principle, together
with the h-principle for exact Lagrangian caps as shown
in~\cite{LC}, Ekholm, Eliashberg, Murphy and Smith
\cite{CELIWFDP} provided many examples of exact Lagrangian
immersions with few double points. We
present here a weaker form of
their result.

\begin{thm}[\cite{CELIWFDP}]
Suppose that $\Lambda$ is a smooth closed $n$--dimensional manifold for
which $T \Lambda \otimes \C$ is a trivial complex bundle. There exists a
loose, horizontally displaceable, chord-generic, Legendrian embedding $\Lambda
\subset \C^n \times \R$ satisfying
\[
\begin{cases}
1 \le |\MQ(\Lambda)| \le 2& \text{if }\, n \,\text{ is odd},\\
|\MQ(\Lambda)|=\frac{1}{2}|\chi(\Lambda)| & \text{if }\, n \,\text{ is even
and }\, \chi(\Lambda)<0,\\
\frac{1}{2}|\chi(\Lambda)| \le |\MQ(\Lambda)| \le
\frac{1}{2}|\chi(\Lambda)|+2 & \text{if }\, n \,\text{ is even and }\,
\chi(\Lambda)>0.
\end{cases}
\]
\end{thm}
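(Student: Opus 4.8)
The plan is to translate the problem into one about exact Lagrangian immersions and then invoke the flexible h-principles of Murphy \cite{LLEIHDCM} and Eliashberg--Murphy \cite{LC} exactly as in \cite{CELIWFDP}. Recall that a Legendrian embedding $\Lambda\subset\C^n\times\R$ projects under $\Pi_{L}$ to an exact Lagrangian immersion of $\Lambda$ into $\C^n$, and that the Reeb chords of $\Lambda$ are in bijection with the transverse double points of this immersion; conversely, an exact Lagrangian immersion of $\Lambda$ into $\C^n$ lifts to a Legendrian embedding whose Reeb chords are precisely its double points. Hence the statement is equivalent to producing, for each closed $n$--manifold $\Lambda$ with $T\Lambda\otimes\C$ trivial, an \emph{exact} Lagrangian immersion into $\C^n$ whose number of double points meets the stated bounds and whose Legendrian lift is loose.

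The formal data is supplied by the triviality of $T\Lambda\otimes\C$: by the Gromov--Lees h-principle for Lagrangian immersions this is exactly the condition guaranteeing that $\Lambda$ immerses Lagrangianly into $\C^n$ at all. The geometric content is the control of the double points. The signed algebraic count of double points is the Whitney self-intersection index, which by the computation recalled in the introduction equals $(-1)^{k+1}\tfrac{1}{2}\chi(\Lambda)$ when $n=2k$ is even and vanishes when $n$ is odd. This number is a lower bound for the geometric count, giving $|\MQ(\Lambda)|\ge\tfrac{1}{2}|\chi(\Lambda)|$ in the even case and no topological lower bound in the odd case. To realize a matching upper bound I would feed the formal immersion into the exact Lagrangian cap h-principle of \cite{LC}: working in the flexible regime one cancels double points in oppositely-signed pairs down to the algebraic minimum, while arranging exactness so that the immersion genuinely lifts. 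The construction assembles the Legendrian as the flexible end of such a cap, so its lift is automatically loose and Murphy's h-principle \cite{LLEIHDCM} applies.

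The counting then proceeds case by case. When $n$ is even and $\chi(\Lambda)<0$ all surviving double points can be taken of the single sign dictated by the index, so their number is exactly $\tfrac{1}{2}|\chi(\Lambda)|$ and equality holds. When $\chi(\Lambda)>0$ the sign produced cheaply by the cap construction is opposite to the one needed, and removing the final obstruction may cost one canceling pair of double points; this yields the range $\tfrac{1}{2}|\chi(\Lambda)|\le|\MQ(\Lambda)|\le\tfrac{1}{2}|\chi(\Lambda)|+2$. When $n$ is odd the index vanishes, so no double points are forced by the self-intersection count; however, since $\Lambda$ is horizontally displaceable, Gromov's theorem \cite{PCISM} guarantees at least one Reeb chord, so at least one double point must survive, and a parity consideration allows reduction to either one or two, giving $1\le|\MQ(\Lambda)|\le 2$. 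In every case horizontal displaceability is automatic, since $\Pi_{L}(\Lambda)\subset\C^n$ is compact and may be pushed off itself by a compactly supported Hamiltonian isotopy.

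The main obstacle is the sharp control of the geometric double-point count down to the algebraic minimum, together with the attendant parity bookkeeping that produces the additive corrections of $+2$ and the odd-dimensional lower bound of $1$; this is precisely where the strength of the caps h-principle of \cite{LC} is needed, and where the asymmetry between $\chi(\Lambda)<0$ and $\chi(\Lambda)>0$ originates. Ensuring simultaneously that the resulting immersion is \emph{exact}, so that it lifts to a genuine Legendrian embedding rather than merely an immersion, and that its lift is \emph{loose}, is the delicate step, all of which is carried out in \cite{CELIWFDP}; the present statement is obtained simply by extracting the $\chi$--dependent bounds from that construction.
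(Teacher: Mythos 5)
This theorem is stated in the paper without proof: it is imported, in an explicitly weakened form, from \cite{CELIWFDP}, so there is no internal argument to compare yours against, and the honest benchmark is the cited source. Measured against that, your outline follows the same strategy: the Gromov--Lees h-principle converts triviality of $T\Lambda\otimes\C$ into a formal (exact) Lagrangian immersion $\Lambda\to\C^n$; the Whitney self-intersection index gives the lower bound $\tfrac{1}{2}|\chi(\Lambda)|$ when $n$ is even; Gromov's theorem \cite{PCISM} (no closed exact Lagrangian embeddings in $\C^n$) forces at least one double point in all cases; and the flexibility machinery of \cite{LLEIHDCM} and \cite{LC} realizes the upper bounds, with horizontal displaceability automatic in $\C^n$. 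One caveat on the geometric core: your phrasing, cancelling double points of a given immersion ``in oppositely-signed pairs down to the algebraic minimum,'' is a gloss rather than the actual mechanism. In \cite{CELIWFDP} the immersion is assembled by gluing an embedded exact Lagrangian cap, produced by the caps h-principle of \cite{LC} from a loose Legendrian end (this is precisely where looseness of the lift comes from, not an automatic consequence), onto a standard immersed piece carrying the prescribed double points; moreover the asymmetry between $\chi(\Lambda)<0$ and $\chi(\Lambda)>0$ and the additive $+2$ arise from an action and grading constraint on double points of \emph{exact} immersions (at least one double point of positive action must survive, and its sign may disagree with the one demanded by the index), not from a bare parity count. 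Since you explicitly defer exactly these steps to \cite{CELIWFDP}, just as the paper does, your proposal is acceptable as a citation-level justification of the statement, but it should not be mistaken for an independent proof, and the two sentences attributing the reductions to the Whitney trick and to parity should be softened accordingly.
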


Some rather basic algebraic considerations show the following
(very) slight improvement of the lower bound in the case when a Legendrian
submanifold has a non-acyclic Chekanov--Eliashberg algebra.
\begin{prop}\label{aprbpundalgprfbwhkn}
Suppose that $\Lambda \subset P \times \R$ is a horizontally
displaceable, chord-generic $n$--dimensional Legendrian submanifold whose
characteristic algebra is non-trivial, but does not
admit any finite-dimensional representations. It follows that
\[
|\MQ(\Lambda)|\geq 3.
\]
Moreover, if $n=2k$, we have the bound
\[
|\MQ(\Lambda)|\geq \tfrac{1}{2}|\chi(\Lambda)|+2
\]
under the additional assumptions that $\Lambda$ is orientable and either
\begin{enumerate}
\item $\chi(\Lambda) \ge 0$, or
\item $\mu(\Lambda)=0$ and all generators have non-negative grading.
\end{enumerate}
\end{prop}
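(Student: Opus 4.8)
The plan is to argue by contradiction, extracting from the failure of finite-dimensional representations a lower bound on the number of generators of $\MC_\Lambda$, and then to combine this with the classical topological count for the even-dimensional refinement. By Gromov's theorem $|\MQ(\Lambda)|\geq 1$, so it suffices to rule out $|\MQ(\Lambda)|\in\{1,2\}$. Write $N:=|\MQ(\Lambda)|$ and let $a_1,\dots,a_N$ be the Reeb chords, so that $\A(\Lambda)=\F\langle a_1,\dots,a_N\rangle$ and $\MC_\Lambda=\A(\Lambda)/(\partial a_1,\dots,\partial a_N)$, with exactly one relation per generator. A one-dimensional representation of $\MC_\Lambda$ over an extension field $\F'$ is the same as an $\F'$-point of the affine variety cut out by the abelianized relations $\overline{\partial a_i}\in\F[a_1,\dots,a_N]$; hence the hypothesis that $\MC_\Lambda$ has no finite-dimensional representation implies in particular that this variety is empty over $\bar\F$, so $\MC_\Lambda^{\mathrm{ab}}=0$ and $1\in(\overline{\partial a_1},\dots,\overline{\partial a_N})$ by the Nullstellensatz. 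In particular the trivial assignment $a_i\mapsto 0$ is not an augmentation, whence some $\partial a_i$ carries a nonzero constant term; since $\partial a_i$ is homogeneous of degree $|a_i|-1$, this forces the existence of a chord of grading $1$ (respectively of odd grading in the $\Z_2$-graded case). For $N=1$ the algebra $\MC_\Lambda=\F[a_1]/(\partial a_1)$ is already commutative, so $\MC_\Lambda^{\mathrm{ab}}=\MC_\Lambda$; emptiness of its variety forces $\partial a_1$ to be a nonzero constant and hence $\MC_\Lambda=0$, contradicting non-triviality.

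The case $N=2$ is the crux, and it is here that I expect the main obstacle. Now the relations $\partial a_1,\partial a_2$ are not arbitrary: they are homogeneous for the grading and satisfy $\partial^2=0$. This last property is essential, since without it the Weyl-type algebra $\F\langle a_1,a_2\rangle/(a_1a_2-a_2a_1-1)$ would be a nonzero two-generator quotient with no finite-dimensional representation; it is excluded precisely because $\partial^2 a_1=\partial(a_1a_2-a_2a_1-1)\neq 0$. The plan is therefore, up to symmetry and for each admissible pair of degrees $(|a_1|,|a_2|)$, to analyze the homogeneous forms that $\partial a_1$ and $\partial a_2$ may take, to impose $\partial(\partial a_i)=0$, and to verify in each case that $\MC_\Lambda$ either vanishes or admits a nonzero finite-dimensional quotient algebra (for instance by killing one generator, which yields a nonzero finite-dimensional commutative quotient) and hence a finite-dimensional representation. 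I expect the delicate subcases to be those with a generator of grading $0$, and the $\Z_2$-graded situation, where homogeneity constrains the relations least; there one must use $\partial^2=0$ directly, exactly as in the Weyl computation above, to force the vanishing of the dangerous coefficients. This contradiction establishes $|\MQ(\Lambda)|\geq 3$; the cleanest formulation is an auxiliary purely algebraic lemma, to the effect that every nonzero quotient $\F\langle x_1,x_2\rangle/(r_1,r_2)$ by the images of a graded differential admits a finite-dimensional representation.

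For the refinement when $n=2k$ I would start from the topological estimate $|\MQ(\Lambda)|\geq\tfrac12|\chi(\Lambda)|$ recalled in the introduction, which arises from the Whitney self-intersection index $(-1)^{k+1}\tfrac12\chi(\Lambda)$ of $\Pi_L(\Lambda)$; orientability enters precisely so that each double point, i.e.\ each chord $c$, contributes a sign determined by the parity of $|c|$, giving $\sum_c(-1)^{|c|}=(-1)^{k+1}\tfrac12\chi(\Lambda)$. If $|\MQ(\Lambda)|$ were to equal $\tfrac12|\chi(\Lambda)|$, or to exceed it only by one, then this signed identity forces all but at most one chord to have gradings of a single parity. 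I would then derive a contradiction from the representation-theoretic input of the first paragraph: the chord of grading $1$ with a constant term, together with non-triviality of $\MC_\Lambda$ and $\partial^2=0$, forces chords of the complementary parity, and under either extra hypothesis ($\chi(\Lambda)\geq 0$, or $\mu(\Lambda)=0$ with all gradings non-negative) the parities are pinned down so that no cancellation in the signed sum can occur. Hence at least two chords beyond the signed minimum are required, yielding $|\MQ(\Lambda)|\geq\tfrac12|\chi(\Lambda)|+2$. The all-odd-parity configuration is again the place where homogeneity alone is insufficient and where these additional hypotheses, rather than any new geometric input, do the work.
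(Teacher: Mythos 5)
The central step of your argument --- ruling out $|\MQ(\Lambda)|=2$ --- is left as a plan rather than a proof, and the plan is aimed at a much harder problem than the one the paper actually has to solve. The missing ingredient is the action (energy) filtration on the Chekanov--Eliashberg differential: since the symplectic area of a non-constant holomorphic disk with positive puncture at $a$ and negative punctures at $b_1,\dots,b_m$ equals $\ell(a)-\sum_i\ell(b_i)>0$, the element $\partial(a)$ is a sum of words of total action strictly less than $\ell(a)$. With two chords $a,b$ ordered so that $\ell(a)\le\ell(b)$, this forces $\partial(a)\in\F$ (hence $\partial(a)=0$, else the DGA is acyclic and $\MC_\Lambda=0$) and $\partial(b)=p(a)\in\F[a]$. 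Thus $\MC_\Lambda=\F\langle a,b\rangle/\langle p(a)\rangle \cong \F[b]*\F[a]/\langle p(a)\rangle$, which surjects onto the nonzero commutative ring $\F[a]/\langle p(a)\rangle$ by killing $b$; quotienting by a maximal ideal gives a finite field extension of $\F$, i.e.\ a finite-dimensional representation --- contradiction. This is the paper's entire argument for $N=2$. Your proposed substitute (a case analysis over the possible graded homogeneous forms of $\partial a_1,\partial a_2$ constrained only by $\partial^2=0$) is not carried out, and the auxiliary ``purely algebraic lemma'' you would need --- that every nonzero quotient of $\F\langle x_1,x_2\rangle$ by the image of a graded square-zero differential admits a finite-dimensional representation --- is neither proved nor obviously true; without the action constraint the relations may a priori involve both letters in complicated ways, and excluding all Weyl-type phenomena degree by degree is a genuinely open-ended task. (Your $N=1$ Nullstellensatz argument does close, but even there the action filtration gives $\partial(a)\in\F$ immediately.)

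The even-dimensional refinement in your third paragraph is essentially the paper's argument: both use $c_{\mathrm{even}}-c_{\mathrm{odd}}=\pm\tfrac{1}{2}\chi(\Lambda)$ (via the Whitney index / Thurston--Bennequin formulas of Ekholm--Etnyre--Sullivan) to reduce to the case where all chords have a single parity, and then derive a contradiction. But the contradiction is simpler than you suggest: in case (1) the single parity is even, so $\partial$ of every generator lies in odd degree while every word of even-degree generators (including the constants) has even degree, whence $\partial=0$ and the trivial augmentation is a $1$-dimensional representation; in case (2), with all generators of odd positive degree, the degree-zero part of the algebra is $\F\cdot 1$, so non-acyclicity kills all constant terms of $\partial$ and the trivial augmentation again exists. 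No appeal to a ``chord of grading $1$ with a constant term'' or to $\partial^2=0$ is needed here, only to the grading and to non-acyclicity.
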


\subsection*{Acknowledgements}
This work was partially supported by the ERC Starting Grant of
Fr\'{e}d\'{e}ric Bourgeois StG-239781-ContactMath and by the ERC Advanced
Grant LDTBud.
The authors are deeply grateful to Yakov Eliashberg, John Etnyre and
Michael Sullivan for helpful conversations and interest in their
work. In addition, Golovko would like to thank
the Simons
Center, where part of this article was written, for its hospitality. Finally,
the authors are grateful to the referee for many valuable comments and
suggestions.

\section{Proof of \texorpdfstring{\autoref{mainthartinchlocbig}}{Theorem 1.6}}

The proof uses the same construction and idea as Ekholm, Etnyre and Sabloff
\cite{ADESFLCH}. In
other words, we consider the two-copy link $\Lambda \cup \Lambda'$
constructed as follows. First, identify a neighborhood of $\Lambda$ with
a neighborhood of the zero-section $\Lambda \subset J^1(\Lambda)$ using
the standard neighborhood theorem for Legendrian submanifolds. Using this
identification, the section $j^1f \subset J^1(\Lambda)$ can be considered
as a Legendrian submanifold $\Lambda '' \subset P \times \R$, where $f
\co \Lambda \to \R$ is a $C^2$--small Morse function. $\Lambda'$ is now
obtained from $\Lambda''$ by a translation sufficiently far in the positive
Reeb direction, so that there are no Reeb chords starting on $\Lambda'$ and
ending on $\Lambda$. Observe that $\Pi_L(\Lambda')$
may still be assumed
to be arbitrarily close to $\Pi_L(\Lambda)$ in $C^1$--norm, and hence that
there is a canonical bijective correspondence $\mathcal{Q}(\Lambda)\simeq
\mathcal{Q}(\Lambda')$ of Reeb chords.

By topological considerations of the disks involved in the
Chekanov--Eliashberg algebra of $\Lambda \cup \Lambda'$, we see that there
is a filtration
\[
(\mathcal{A}(\Lambda \cup \Lambda'),\partial) \supset \cdots \supset
(\mathcal{A}(\Lambda \cup \Lambda')^1,\partial) \supset (\mathcal{A}(\Lambda
\cup \Lambda')^0,\partial),
\]
where $(\mathcal{A}(\Lambda \cup \Lambda')^i,\partial)$ is spanned by
words containing at most $i$ letters corresponding to mixed chords having
starting point on $\Lambda$ and endpoint on $\Lambda'$. We can thus define
the quotient complex
\[
(\mathcal{A}(\Lambda,\Lambda'),\partial) := (\mathcal{A}(\Lambda \cup
\Lambda')^1,\partial)/\mathcal{A}(\Lambda \cup \Lambda')^0,
\]
which we identify with the  vector space over $\F$ spanned by words of
the form
$\bfa c\bfb $, where $\bfa $ is a word of Reeb chords
on $\Lambda$, $c$ is a Reeb chord starting on $\Lambda$ and ending
on $\Lambda'$, and $\bfb $ is a word of Reeb chords on
$\Lambda'$. In fact, this $\F$--vector space is naturally a free
$\mathcal{A}(\Lambda)$--$\mathcal{A}(\Lambda')$--bimodule spanned by the
set $\mathcal{Q}(\Lambda,\Lambda')$ of Reeb chords starting on $\Lambda$
and ending on $\Lambda'$. However, the differential is in general not a
morphism of bimodules.

From the existence of  $\rho\co \mathcal{C}_\Lambda \to \mathrm{M}_k(\F)$
and the fact that $\Lambda$ and $\Lambda'$ are Legendrian
isotopic, it follows from \autoref{propinvariance} that there
exists a linear representation $\rho'\co\mathcal{C}_{\Lambda'} \to
\mathrm{M}_k(\F)$ as well. Let $\mathcal{C}(\Lambda,\Lambda')$ denote
the free $\mathcal{C}_\Lambda$--$\mathcal{C}_{\Lambda'}$--bimodule
generated by $\mathcal{Q}(\Lambda,\Lambda')$. Similarly,
let $\mathcal{C}_\rho(\Lambda,\Lambda')$ denote the free
$\mathrm{M}_k(\F)$--$\mathrm{M}_k(\F)$--bimodule generated by
$\mathcal{Q}(\Lambda,\Lambda')$. Observe that the compositions
\begin{gather*}
\mathcal{A}(\Lambda) \xrightarrow{\pi_\Lambda} \mathcal{C}_\Lambda \xrightarrow{\rho}\mathrm{M}_k(\F),\\
\mathcal{A}(\Lambda') \xrightarrow{\pi_{\Lambda'}} \mathcal{C}_{\Lambda'} \xrightarrow{\rho'} \mathrm{M}_k(\F),
\end{gather*}
where $\pi_\Lambda$ and $\pi_{\Lambda'}$ are the quotient projections,
induce a commutative diagram
\[
\xymatrix{\mathcal{A}(\Lambda,\Lambda') \ar@{->>}[r]
\ar[d]_{\partial} & \mathcal{C}(\Lambda,\Lambda') \ar[r] \ar[d] &
\mathcal{C}_\rho(\Lambda,\Lambda') \ar@{-->}[d] \\
\mathcal{A}(\Lambda,\Lambda') \ar@{->>}[r] & \mathcal{C}(\Lambda,\Lambda')
\ar[r] & \mathcal{C}_\rho(\Lambda,\Lambda'),}
\]
where all maps are $\F$--linear and the horizontal maps are the
induced natural $\mathcal{A}(\Lambda)$--$\mathcal{A}(\Lambda')$--bimodule
morphisms. The middle vertical map is a uniquely determined morphism of
$\mathcal{C}_\Lambda$--$\mathcal{C}_{\Lambda'}$--bimodules induced by the
differential $\partial$, as follows from the Leibniz rule
\[
\partial(\bfa c\bfb )=\partial_{\Lambda}(\bfa )c\bfb +
(-1)^{|\bfa c|}
\bfa c\partial_{\Lambda'}(\bfb )+(-1)^{|\bfa |}\bfa \partial(c)\bfb
\]
together with the fact that $\partial_{\Lambda}(\bfa )$ and
$\partial_{\Lambda'}(\bfb )$ are in the kernels of the
projection to the respective characteristic algebras. It follows that the
rightmost vertical map is uniquely determined by the requirement of being
an $\mathrm{M}_k(\F)$--$\mathrm{M}_k(\F)$--bimodule morphism making the diagram
commutative.

\begin{rem}
In the case when $k=1$, the rightmost vertical map is simply the
linearized differential with respect to the augmentation $\epsilon \co
\mathcal{A}(\Lambda \cup \Lambda') \to \F$
that is defined as follows:
$\epsilon$ vanishes on generators corresponding to chords starting on
$\Lambda$ and ending on $\Lambda'$, while it takes the value $\rho \circ
\pi_{\Lambda}$ and $\rho' \circ \pi_{\Lambda'}$ on generators corresponding
to chords on $\Lambda$ and $\Lambda'$, respectively.
\vadjust{\goodbreak}%
\end{rem}

The assumption of horizontal displaceability implies
that there is
a Legendrian isotopy $\Lambda_t \cup \Lambda'_t$ for which $\Lambda_0
\cup \Lambda'_0=\Lambda \cup \Lambda'$ and such that there are no
chords between $\Lambda_1$ and $\Lambda'_1$. The invariance proof of
Ekholm, Etnyre and Sullivan
\cite{LCHIPXR} implies that, after a finite number of stabilizations
by the direct sum of a trivial complex with two generators, the complex
$(\mathcal{A}(\Lambda,\Lambda'),\partial)$ is isomorphic to the stabilization
of a trivial complex. It follows that the same is true for the complex
$(\mathcal{C}_\rho(\Lambda,\Lambda'),\partial)$. In particular, the latter
complex is acyclic as well.

Observe that there is a natural isomorphism
\[
\Phi\co\mathrm{M}_k(\F) \otimes \mathrm{M}_k(\F)^{\mathrm{\mathrm{op}}}
\to \mathrm{M}_{k^2}(\F)
\]
of $\F$--algebras determined by taking the values $\Phi(A\otimes B)(v\otimes
w):=A(v)\otimes B^T(w)$ and then extended by linearity
for $v, w \in
\F^k$
and $A, B\in \mathrm{M}_k(\F)$,
where we have used an identification
$\F^k \otimes \F^k \simeq \F^{k^2}$. The considerations of the set
$\mathcal{Q}(\Lambda,\Lambda')$
in \cite[Section~3.1]{ADESFLCH} show that
\[
\mathcal{C}_\rho(\Lambda,\Lambda')=\mathcal{Q} \oplus \mathcal{C} \oplus
\mathcal{P},
\vspace{-5pt}
%FMT
\]
where
$$
\mathcal{Q}=\mathrm{M}_{k^2}(\F)^{\oplus{\mathcal{Q}(\Lambda)}},
\quad
\mathcal{C}=\mathrm{M}_{k^2}(\F)^{\oplus{\mathrm{Crit}(f)}},
\quad
\mathcal{P}=\mathrm{M}_{k^2}(\F)^{\oplus{\mathcal{Q}(\Lambda)}}.
$$
Here, $\mathcal{Q}$ and $\mathcal{P}$ are both generated by subsets of
$\mathcal{Q}(\Lambda,\Lambda')$ which are in canonical bijection with
$\mathcal{Q}(\Lambda)$, and $\mathcal{C}$ is generated by Reeb chords which
are in canonical bijection with the critical points of $f$.

Moreover, using appropriate choices of Maslov potentials, a generator $q_c
\in \mathcal{Q}$ corresponding to the chord $c \in \mathcal{Q}(\Lambda)$ is
graded by $|q_c|=|c|$, a generator $c_x \in \mathcal{C}$ corresponding to the
critical point $x$ is graded by $|c_x|=\mathrm{index}_{\mathrm{Morse}}(x)-1$,
while a generator $q_c \in \mathcal{P}$ corresponding to the chord $c \in
\mathcal{Q}(\Lambda)$ is graded by $|q_c|=-|c|+n-2$. Finally, we may assume
that the actions of the generators in $\mathcal{Q}$ are strictly greater
than the actions of the generators in $\mathcal{C}$, which, in turn, are
strictly greater than the actions of the generators in $\mathcal{P}$.

The analysis in \cite[Theorem 3.6]{ADESFLCH} moreover shows that, for a
suitable choice of almost complex structure and metric on $\Lambda$, the
differential with respect to the above decomposition is of the form
\[
\partial=\begin{pmatrix} * & 0 & 0 \\
* & \partial_f & 0 \\
* & * & *
\end{pmatrix}\!,
\]
where $\partial_f$ is the Morse differential induced by a Morse--Smale pair
$(f,g)$ (where the coefficients have been taken in $\mathrm{M}_{k^2}(\F)$).
\vadjust{\goodbreak}%

The inclusion of subcomplexes
\[
\mathcal{P} \subset \mathcal{C} \oplus \mathcal{P}\subset
(\mathcal{C}_\rho(\Lambda,\Lambda'),\partial)
\]
induces the long exact sequence
\[
\cdots \to H_\bullet(\mathcal{P}) \to H_\bullet(\mathcal{C} \oplus
\mathcal{P}) \to H^{\mathrm{Morse}}_{\bullet+1}(f;\mathrm{M}_{k^2}(\F))
\to H_{\bullet-1}(\mathcal{P}) \to \cdots
\]
in homology. Since the acyclicity of
$(\mathcal{C}_\rho(\Lambda,\Lambda'),\partial)$ implies that
\[
H_\bullet(\mathcal{C} \oplus \mathcal{P}) \simeq H_{\bullet+1}(\mathcal{Q}),
\]
we obtain the long exact sequence
\[
\cdots \to H_\bullet(\mathcal{P}) \to H_{\bullet+1}(\mathcal{Q})
\to H^{\mathrm{Morse}}_{\bullet+1}(f;\mathrm{M}_{k^2}(\F)) \to
H_{\bullet-1}(\mathcal{P}) \to \cdots.
\]
Exactness and the fact that all modules in the long exact sequence
are finite-dimensional $\F$--vector spaces imply that we have the
inequality
\begin{equation}\label{mineqanofthernulgtr}
\dim_\F H^{\mathrm{Morse}}_i(f;\mathrm{M}_{k^2}(\F)) \le \dim_\F
H_{i-2}(\mathcal{P}) + \dim_\F H_i(\mathcal{Q}),
\end{equation}
where we can compute the left-hand side to be
\[
\dim_\F H^{\mathrm{Morse}}_i(f;\mathrm{M}_{k^2}(\F))=k^4 \dim_\F
H^{\mathrm{Morse}}_i(f;\F)=k^4b_i,
\]
since we are using field coefficients and
$\dim_\F
\mathrm{M}_{k^2}(\F)=k^4$. On the other hand, we also have the bounds
\begin{gather*}
\dim_\F H_i(\mathcal{Q})\le \dim_\F \mathcal{Q}_i =  k^4 c_i,\\
\dim_\F H_{i-2}(\mathcal{P})\le \dim_\F \mathcal{P}_{i-2} = k^4 c_{n-i},
\end{gather*}
from which the sought inequality follows.

\section[A partial computation of the Chekanov--Eliashberg algebra for
an Sm spun Legendrian]
{A partial computation of the Chekanov--Eliashberg\\
algebra for an $S^m$--spun Legendrian}
\label{techsectspuninvhom}

In
 the following we let $\Lambda \subset P \times \R$ be an
arbitrary closed chord-generic Legendrian submanifold of the contactization
of a Liouville domain. We
prove here
the following relationship between the Chekanov--Eliashberg algebra
of the Legendrian submanifold $S^m \times \Lambda \subset T^*S^m \times P
\times \R$ and the
Chekanov--Eliashberg algebra of $\Lambda$:

\begin{thm}\label{maininclinviig} For a suitable almost complex structure
on $T^*S^m \times P$ and a suitable Legendrian chord-generic perturbation
$L$ of $S^m \times \Lambda \subset T^*S^m \times P \times \R$, there is a
canonical inclusion
\[
\iota \co (\mathcal{A}_\bullet(\Lambda),\partial_\Lambda) \to
(\mathcal{A}_\bullet(L),\partial_L)
\vadjust{\goodbreak}
\]
of unital DGAs which, moreover, can be left-inverted by a unital DGA
morphism
\[
\pi \co (\mathcal{A}_\bullet(L),\partial_L) \to
(\mathcal{A}_\bullet(\Lambda),\partial_\Lambda).
\]
If $P=\C^{n}$, then the same is true for the corresponding Legendrian
submanifold obtained by an inclusion induced by an exact symplectic embedding
$$
(T^*S^m \times T\R^n,
d\theta_{S^m}
\oplus
d\theta_{\R^n})
\hookrightarrow (T\R^{n+m},d\theta_{\R^{n+m}}),
$$
 ie the $S^m$--spinning
$\Sigma_{S^m}\Lambda \subset J^1\R^{n+m}$ of $\Lambda$.
\end{thm}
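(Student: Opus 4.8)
The plan is to realize $L$ as a Morse perturbation of the Morse--Bott degenerate Legendrian $S^m\times\Lambda$ and to read off both DGA morphisms from the resulting chord structure. First I would equip $T^*S^m\times P$ with a product almost complex structure $J_{S^m}\oplus J_P$, compatible with the product Liouville structure, and perturb $S^m\times\Lambda$ using the one-jet of a fixed Morse function $f\co S^m\to\R$ with exactly two critical points, a minimum and a maximum, coming from the standard height function. Since the Lagrangian projection of $S^m$ (equivalently, of the $C^2$-small perturbation $j^1f$) is an embedding, the $T^*S^m$ factor contributes no Reeb chords of its own; hence the only Reeb chords of $L$ arise from the $S^m$-families of chords lying over the double points of $\Pi_L(\Lambda)$, each such family being resolved by $f$ into one chord over the minimum and one over the maximum. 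I would write these as $c_{\min}$ and $c_{\max}$ for $c\in\MQ(\Lambda)$. A Morse--Bott index computation then gives the relative grading $|c_{\max}|=|c_{\min}|+m$, and the minimum is normalized so that $|c_{\min}|=|c|$.

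The geometric heart of the argument is the identification of the rigid $J_{S^m}\oplus J_P$-holomorphic disks with boundary on $L$. Using the product structure, such a disk projects to a disk in $P$ with boundary on $\Pi_L(\Lambda)$ and to a disk in $T^*S^m$ with boundary on the graph of $df$; in the adiabatic limit where the perturbation $f$ is scaled to zero, the latter degenerates to a rigid gradient flow tree on $(S^m,f)$, exactly as in the $S^1$-computation of Ekholm and Kálmán \cite{IOL1KAL2T}. Because the minimum is an index-$0$ critical point and the tree flows downhill, any flow tree whose root lies over the minimum must be constant; hence a disk with positive puncture at some $c_{\min}$ has all of its negative punctures at minimum chords as well, and the induced count reproduces $\partial_\Lambda$ letter by letter. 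Consequently the minimum chords span a sub-DGA canonically isomorphic to $(\mathcal{A}_\bullet(\Lambda),\partial_\Lambda)$, and I would define $\iota$ to be this inclusion, that is $\iota(c)=c_{\min}$ extended multiplicatively.

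To produce the left inverse I would take $\pi$ to be the algebra map sending $c_{\min}\mapsto c$ and $c_{\max}\mapsto 0$, so that $\pi\circ\iota=\id$ by construction. The only point to check is that $\pi$ is a chain map, equivalently that the two-sided ideal $J\subset\mathcal{A}_\bullet(L)$ generated by the maximum chords is preserved by $\partial_L$. By the Leibniz rule this reduces to showing $\partial_L(c_{\max})\in J$, i.e. that no rigid disk has positive puncture at a maximum chord and all negative punctures at minimum chords. Here the grading does the work: such a disk would have formal dimension
\[
|c_{\max}|-1-\textstyle\sum_i|c_i|=\bigl(|c|+m\bigr)-1-\textstyle\sum_i|c_i|,
\]
so that rigidity would force the corresponding moduli space of disks on $\Lambda$ to have dimension $-m<0$, which is empty for a generic choice. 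Thus $J$ is a differential ideal, $\pi\co\mathcal{A}_\bullet(L)\to\mathcal{A}_\bullet(L)/J\cong\mathcal{A}_\bullet(\Lambda)$ is the desired DGA left inverse, and the main part of the theorem follows. I expect the transversality and gluing underlying the flow-tree correspondence of the previous paragraph, extending Ekholm and Kálmán from $S^1$ to $S^m$ (most cleanly through a Morse--Bott degeneration in the spirit of Bourgeois), to be the principal technical obstacle; the two morphisms themselves are then essentially forced by the grading.

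Finally, for $P=\C^n$ I would deduce the statement for $\Sigma_{S^m}\Lambda$ by transporting $\iota$ and $\pi$ across the exact symplectic embedding $T^*S^m\times\C^n\hookrightarrow\C^{n+m}$. Choosing an almost complex structure on $\C^{n+m}$ that restricts to $J_{S^m}\oplus J_{\C^n}$ on the image of the embedding and is convex along the boundary of a Weinstein neighborhood of this image, an SFT neck-stretching argument confines every holomorphic disk with boundary on $\Sigma_{S^m}\Lambda$ to the image; the disk counts therefore agree with those computed for $L$, yielding a DGA isomorphism $\mathcal{A}_\bullet(\Sigma_{S^m}\Lambda)\cong\mathcal{A}_\bullet(L)$ under which $\iota$ and $\pi$ carry over verbatim.
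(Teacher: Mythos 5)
Your construction of $L$, the grading computation $|c_{\max}|=|c_{\min}|+m$, the definition of $\iota$ and $\pi$, and the overall strategy (show the maximum chords generate a differential ideal) all match the paper. The genuine gap is in your dimension count for $\partial_L(c_{\max})\in J$. You argue that a rigid disk with positive puncture at $c_{\max}$ and all negative punctures at minimum chords would project to a moduli space on $\Lambda$ of dimension $-m<0$, hence cannot exist. This only applies when the projection to $P$ is \emph{non-constant}. The disks you miss are the strips $u=(\widetilde{u},p)$ whose projection to $P$ is the constant map at a double point $p$ of $\Pi_L(\Lambda)$, with positive puncture at $c_{\max}$ and negative puncture at $c_{\min}$ for the same $c\in\MQ(\Lambda)$; their index is $|c_{\max}|-1-|c_{\min}|=m-1$, not $-m$, and their existence is not governed by the regularity of $J_P$ on $\Lambda$. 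For $m\ge 2$ they have positive index and so cause no harm, but for $m=1$ they are rigid, transversely cut out, and there are exactly two of them, so your claim that no such rigid disk exists is false in that case. The theorem survives only because the two strips carry opposite coherent orientations (equivalently, contribute $2=0$ over $\Z_2$) and hence cancel; establishing this requires the extra symmetry/sign argument of the paper (the $O(1)$-involution of Lemmas 3.10--3.11), which is a missing idea in your write-up, not a routine detail.

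Two further points where your argument is thinner than it needs to be. First, the symmetric (product-type) almost complex structure is not regular for the disks emanating from maximum chords, so one must perturb it and then control the Gromov limits; this lets in \emph{broken} configurations whose intermediate punctures sit at maximum chords, and excluding index-zero broken disks with all external negative punctures at minima requires a separate index estimate (the paper's Lemma 3.9) that your rigidity count does not cover. Second, your justification that $\iota$ is a chain map rests on an adiabatic flow-tree correspondence for general $m$ and general $P$, which is a substantial machine; the paper instead gets the confinement statement (a disk with positive puncture over the minimum stays in the fiber over the minimum) directly from the open mapping theorem and the maximum principle applied to the holomorphic projection to a Lefschetz fibration of $T^*S^m$, together with a hands-on proof that transversality in the fiber is equivalent to transversality downstairs. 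That route is both more elementary and avoids the unproven flow-tree input. Your final neck-stretching reduction for $P=\C^n$ is consistent with the paper's use of an exact symplectic embedding.
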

\begin{rem}
Ekholm and K\'alm\'an
  \cite{IOL1KAL2T} computed the \emph{full}
 Chekanov--Eliashberg algebra of
$\Sigma_{S^1}\Lambda$ in terms of
the Chekanov--Eliashberg algebra
%that
of $\Lambda$
under the additional assumption that $\dim \Lambda=1$.
\end{rem}

Note that \autoref{cormrftspgdsy} immediately follows from
\autoref{maininclinviig} using elementary algebraic considerations.

We postpone the proof of \autoref{maininclinviig} to \autoref{proof}. The
idea is to find a geometric correspondence between the pseudo-holomorphic
disks defining the differential $\partial_\Lambda$ and those defining
$\partial_L$. This correspondence will be induced by the canonical projection
map $T^*S^m \times P \to P$ for some sufficiently symmetric perturbation
of $L$ and choice of almost complex structure. Even though not all disks in
the definition of $\partial_L$ are transversely cut out for these choices,
we will show that the parts of the differential needed to deduce the above
result still will be determined by transversely cut-out disks.

\subsection{Constructing the perturbation $L$ of $S^m \times \Lambda$}
\label{secpert}

Consider the two antipodal points
$$
N=(0,\ldots,0,1) \:\: \text{and} \:\:
S=(0,\ldots,0,-1)
$$
on the unit sphere $S^m \subset
\R^{m+1}$. We will take $g \co S^m \to\nobreak [0,1]$ to be the
Morse function with two critical points obtained as the restriction of
$\frac{1}{2}(1+x_{m+1})$, where $x_{m+1}$ denotes the last standard coordinate
on~$\R^{m+1}$.
The critical points of $g$ are obviously $S$ and $N$ with critical
values $g(S)=0$ and~$g(N)=1$, respectively. Moreover, we have $g \circ r =g$
for any $r \in O(m) \subset O(m+1)$, where we use $O(m)$ to denote
the orthogonal transformations of $\R^{m+1}$ that fix $\{N,S\}$ pointwise.

In the following we will let $h \co \Lambda \to \R$ denote the
$z$--coordinate restricted to $\Lambda \subset P \times \R$. In other words,
$-h$ is a primitive of $\theta$ pulled back to $\Lambda$. Consider the
so-called Liouville flow $\phi^t \co P \to P$ with
respect to $\theta$, which is determined by the property that
$(\phi^t)^*(\theta)=e^t \theta$. One constructs a Legendrian isotopy $L_t
\subset T^*S^m \times P$ parametrized~by%
\begin{gather*}
S^m \times \Lambda \to T^*S^m \times P \times \R,\\
(\mathbf{q},x) \mapsto (-h(x) d\,e^{t\,g(\mathbf{q})},\phi^{t\,g(\mathbf{q})}(x),h(x)e^{t\,g(\mathbf{q})}),
\end{gather*}
for which $L_0 = S^m \times \Lambda$. We will be interested in the Legendrian
embedding $L:=L_\epsilon$ for some sufficiently small $\epsilon>0$, which
thus may be assumed to be arbitrarily $C^1$--close to the Legendrian embedding
$S^m \times \Lambda$.

Let
\[
\pi_{T^*S^m} \co T^*S^m \times P \to T^*S^m
\]
denote the canonical projection. Under the assumption that
$\Lambda$ is chord-generic, it follows that $L$ is chord-generic as
well. The Reeb chords on $L$ are all contained in either
$\pi_{T^*S^m}^{-1}(S)$ or  $\pi_{T^*S^m}^{-1}(N)$, and we use $\mathcal{Q}_S$
and
$\mathcal{Q}_N$ to denote the respective sets of Reeb chords.
Observe that there is a canonical bijection between each of these
subsets of Reeb chords on $L$ with the Reeb chords on $\Lambda$. More
precisely, one can say the following:
\begin{lem}
\label{lembijection} The canonical bijection
$\mathcal{Q}_S \simeq \mathcal{Q}(\Lambda)$ preserves both the action and
index of the chords
while, for the canonical bijection $\mathcal{Q}_N \simeq
\mathcal{Q}(\Lambda)$, the action is multiplied by $e^\epsilon$ and the
grading is increased by $m$.
\end{lem}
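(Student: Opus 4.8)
The plan is to read off the Reeb chords of $L=L_\epsilon$ directly from the explicit parametrization
\[
(\bfq,x)\mapsto\bigl(-h(x)\,d(e^{\epsilon g})|_{\bfq},\ \phi^{\epsilon g(\bfq)}(x),\ h(x)e^{\epsilon g(\bfq)}\bigr).
\]
First I would locate the double points of $\Pi_L(L)$. Two parameters $(\bfq_1,x_1)$, $(\bfq_2,x_2)$ have the same image in $T^*S^m\times P$ only if $\bfq_1=\bfq_2=:\bfq$, if $\Pi_L(x_1)=\Pi_L(x_2)$ (equivalently $\phi^{\epsilon g(\bfq)}(x_1)=\phi^{\epsilon g(\bfq)}(x_2)$, since $\phi^{\epsilon g(\bfq)}$ is a diffeomorphism of $P$), and if the cotangent components $-h(x_i)\,\epsilon e^{\epsilon g(\bfq)}\,dg|_{\bfq}$ agree. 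Since $dg|_{\bfq}\neq 0$ away from $\{N,S\}$, over a noncritical $\bfq$ the last condition forces $h(x_1)=h(x_2)$ and hence $x_1=x_2$, so no genuine chord occurs there; over $\bfq\in\{N,S\}$ the cotangent component vanishes identically, and the remaining condition $\Pi_L(x_1)=\Pi_L(x_2)$ with $h(x_1)\neq h(x_2)$ is precisely the condition that $(x_1,x_2)$ be a Reeb chord of $\Lambda$. This produces the two canonical bijections $\MQ_S\simeq\MQ(\Lambda)\simeq\MQ_N$; nondegeneracy of $\mathrm{Hess}(g)$ at $N,S$ gives transversality in the $T^*S^m$ directions and chord-genericity of $\Lambda$ gives it in $P$, so $L$ is chord-generic.

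The action statement is then immediate from the third coordinate above: the $z$-value over $(\bfq,x)$ is $h(x)e^{\epsilon g(\bfq)}$, and the action of a chord is the difference of the $z$-values of its endpoints. Over $S$ we have $g(S)=0$, so the action equals $h(x_+)-h(x_-)$, the action of the corresponding chord of $\Lambda$; over $N$ we have $g(N)=1$, so the action equals $e^{\epsilon}\bigl(h(x_+)-h(x_-)\bigr)$, i.e.\ $e^\epsilon$ times the original.

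For the grading I would exploit the local product structure of $\Pi_L(L)$ at a chord over a critical point $\bfq_0$ together with additivity of the Conley--Zehnder/Maslov-type index underlying the grading (computed for the product almost complex structure of \autoref{maininclinviig}). Near $\bfq_0$ the two sheets split, to the relevant order, as a product of a sheet in $T^*S^m$ and a sheet in $P$; the $P$-factor is the Lagrangian projection of $\phi^{\epsilon g(\bfq_0)}(\Lambda)$, which the Liouville flow $\phi^{-\epsilon g(\bfq_0)}$ carries back to $\Pi_L(\Lambda)$ while preserving the relevant index data, so it contributes exactly the grading $|c|$ of the chord of $\Lambda$. In the $T^*S^m$-factor, the tangent plane of each sheet is the graph of $-h(x)\,\epsilon e^{\epsilon g(\bfq_0)}\,\mathrm{Hess}(g)(\bfq_0)$, positive definite at the minimum $S$ and negative definite at the maximum $N$; splitting further into the $m$ eigendirections and adding the one-dimensional contributions shows that the $T^*S^m$-factor contributes the number of negative eigenvalues of $\mathrm{Hess}(g)(\bfq_0)$, i.e.\ the Morse index of $g$ at $\bfq_0$, namely $0$ at $S$ and $m$ at $N$. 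This is the expected Morse--Bott picture: the unperturbed $S^m\times\Lambda$ carries an entire $S^m$-family of chords over each chord $c$ of $\Lambda$, and perturbing by $g$ replaces this family by one chord at each critical point of $g$, with grading shifted by the corresponding Morse index.

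I expect the grading to be the main obstacle. One must fix the precise grading convention and Maslov-potential normalization, establish additivity of the index under the splitting $T^*S^m\oplus P$ for the chosen almost complex structure, and evaluate the index of the $T^*\R^m$-path determined by $\mathrm{Hess}(g)(\bfq_0)$, checking that the shift is exactly $+m$ at $N$ and $0$ at $S$ rather than off by a sign or a global constant. The normalization can be pinned down by comparison with the known case $m=1$ of Ekholm--K\'alm\'an, where the two chords over $c$ have gradings $|c|$ and $|c|+1$.
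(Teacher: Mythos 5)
The paper states this lemma without proof, as an immediate consequence of the explicit parametrization of $L_\epsilon$ in Section 3.1, and your argument correctly supplies the intended details: the double-point analysis and the action computation follow exactly as you describe, and the grading shift by the Morse index of $g$ (namely $0$ at the minimum $S$ and $m$ at the maximum $N$) is the standard Morse--Bott perturbation picture that the paper relies on implicitly and later feeds into the index computations of \autoref{lemindex} via the index formula of Ekholm--Etnyre--Sullivan. One small bookkeeping point: the definiteness that determines the shift is that of the Hessian of the \emph{difference} of the $z$-coordinates of the two sheets over a chord, $(h(x_+)-h(x_-))e^{\epsilon g}$ (positive definite at $S$, negative definite at $N$), rather than of $-h(x)\,\epsilon e^{\epsilon g(\mathbf{q}_0)}\mathrm{Hess}(g)(\mathbf{q}_0)$ for a single sheet whose sign depends on that of $h$; this does not affect your conclusion, and your proposed calibration against the $m=1$ computation of Ekholm--K\'alm\'an is the right way to pin down the remaining conventions.
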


\subsection{A suitable almost complex structure}
\label{secacs}

Here we construct an almost complex structure $J$ on $T^*S^m \times P$
that suits our~needs.%

\subsubsection{An integrable almost complex structure on $T^*S^m$}

Let $i$ denote the standard complex structure on
\[
A^m_1=\{ z_1^2 +\cdots+z_{m+1}^2=1\} \subset \C^{m+1}.
\]
We will use the following explicit identification between $T^*S^m$ and
$A^m_1$. Using the round metric together with the canonical embedding
$S^m \subset \R^{m+1}$, we identify $(\bfq ,\bfp )\in T^*S^m$
with the tangent vector $v_{\bfp } \in T_{\bfq }S^m \subset
T_{\bfq }\R^{m+1}$. We now consider the identification
\begin{gather*} \Psi \colon T^*S^m \to A^m_1 \subset \C^{m+1} \\
(\mathbf{q},\mathbf{p}) \mapsto \sqrt{1+\|v_{\mathbf{p}}\|^2}\mathbf{q}-iv_{\mathbf{p}}.
\end{gather*}
It is readily checked that
$\eta(\bfx +i\bfy ):=\sqrt{1+\|\bfy \|^2}$ is strictly
plurisubharmonic on $(\C^{m+1},i)$, ie that $-d(d\eta \circ i)$ is a
K\"ahler form, and that moreover
\[
\Psi^*(-d\eta \circ i)=\theta_{S^m},
\]
where $\theta_{S^m}$ is the Liouville form on $T^*S^m$.

Observe that the linear action of $r\in O(m+1)$ on $\R^{m+1}$, which
preserves $S^m$,
extends to a complex linear action by $r\in U(m+1)$ on
$\C^{m+1}$ which preserves both $A^m_1$ and~$A^m_1 \cap
\mathfrak{Re}(\C^{m+1})$. Since this action moreover preserves $\eta$,
it is a K\"ahler isometry of $A^m_1$. Finally, it can be checked that
the corresponding action on $T^*S^m$ induced by the identification $\Psi$
coincides with the canonical symplectomorphism $r^*$.

In conclusion, using $i$ to denote the almost complex structure on $T^*S^m$
induced by the above identification $\Psi$, we have shown:

\begin{lem}
$(T^*S^m,i,d\theta_{S^m})$ is a K\"ahler manifold for which each
\[
r^* \co (T^*S^m,i,d\theta_{S^m}) \to (T^*S^m,i,d\theta_{S^m}),\quad r \in O(m+1),
\]
is a K\"ahler isometry.
\end{lem}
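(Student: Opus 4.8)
The plan is to verify three claims: that $\Psi$ is an honest diffeomorphism onto $A^m_1$, that $\eta$ is strictly plurisubharmonic, and that $\Psi^*(-d\eta\circ i)=\theta_{S^m}$, after which the statement about $r^*$ being a Kähler isometry will follow formally from the preceding remarks. **First I would** confirm that the formula $\Psi(\mathbf{q},\mathbf{p})=\sqrt{1+\|v_{\mathbf p}\|^2}\,\mathbf{q}-iv_{\mathbf p}$ genuinely lands in $A^m_1$. Writing $\mathbf{x}=\sqrt{1+\|v_{\mathbf p}\|^2}\,\mathbf q$ and $\mathbf y=-v_{\mathbf p}$, one has $\sum z_j^2=\|\mathbf x\|^2-\|\mathbf y\|^2+2i\,\mathbf x\cdot\mathbf y$. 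Since $\mathbf q$ is a unit vector and $v_{\mathbf p}\in T_{\mathbf q}S^m$ is orthogonal to $\mathbf q$, the real part is $(1+\|v_{\mathbf p}\|^2)-\|v_{\mathbf p}\|^2=1$ and the imaginary part $\mathbf x\cdot\mathbf y$ vanishes, so $\Psi(\mathbf q,\mathbf p)\in A^m_1$. A dimension count together with an inverse map (recovering $\mathbf q=\mathbf x/\|\mathbf x\|$ and $v_{\mathbf p}=-\mathbf y$ from a point of $A^m_1\subset\C^{m+1}$, using that the real and imaginary parts of a point on $A^m_1$ are automatically orthogonal with the stated norm relation) shows $\Psi$ is a diffeomorphism.

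**Next I would** establish the two analytic identities. Strict plurisubharmonicity of $\eta(\mathbf x+i\mathbf y)=\sqrt{1+\|\mathbf y\|^2}$ amounts to checking that the complex Hessian $\partial\bar\partial\eta$ is positive definite, equivalently that $-d(d\eta\circ i)$ is a Kähler form; this is a direct computation of the second derivatives of $\sqrt{1+\|\mathbf y\|^2}$ in the $\mathbf x,\mathbf y$ coordinates, and it is exactly the kind of standard convexity estimate that makes $\eta$ an exhausting plurisubharmonic function. The identity $\Psi^*(-d\eta\circ i)=\theta_{S^m}$ is then verified by pulling back the Liouville form $\lambda=-d\eta\circ i$ on $\C^{m+1}$ along $\Psi$ and comparing with the canonical one-form on $T^*S^m$; because $\Psi$ was built precisely so that the imaginary part $-v_{\mathbf p}$ encodes the cotangent data and the real part encodes the base point, the pullback of $\lambda$ reproduces the tautological pairing $\mathbf p\,d\mathbf q$.

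**The main obstacle** is the last sentence of the computation — that the induced action on $T^*S^m$ coincides with the canonical symplectomorphism $r^*$ — but this is really bookkeeping rather than a genuine difficulty. The key point is that $r\in O(m+1)$ acts on $\R^{m+1}$ preserving $S^m$ and its round metric, hence acts on $T^*S^m$ by the canonical lift $r^*$, which sends $(\mathbf q,\mathbf p)\mapsto(r\mathbf q, (r^{-1})^{\!*}\mathbf p)$; under the metric identification this is simply $v_{\mathbf p}\mapsto r\,v_{\mathbf p}$. Meanwhile the complexified action $r\in U(m+1)$ sends $\Psi(\mathbf q,\mathbf p)=\sqrt{1+\|v_{\mathbf p}\|^2}\,\mathbf q-iv_{\mathbf p}$ to $\sqrt{1+\|v_{\mathbf p}\|^2}\,r\mathbf q-i\,r v_{\mathbf p}$, which is exactly $\Psi(r\mathbf q, r v_{\mathbf p})$; thus $\Psi\circ r^*=r\circ\Psi$ as claimed. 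Since $r\in U(m+1)$ is complex linear it preserves $i$, and since it preserves $\eta$ (because $\|\mathbf y\|$ is invariant under the orthogonal action on the imaginary part) it preserves the Kähler form $-d(d\eta\circ i)$; transporting these facts through $\Psi$ shows each $r^*$ preserves $i$ and $d\theta_{S^m}$ on $T^*S^m$, i.e.\ is a Kähler isometry. This yields the lemma.
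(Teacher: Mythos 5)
Your proposal follows the same route as the paper: both identify $(T^*S^m,\theta_{S^m})$ with the affine quadric $A^m_1\subset\C^{m+1}$ via $\Psi(\mathbf{q},\mathbf{p})=\sqrt{1+\|v_{\mathbf{p}}\|^2}\,\mathbf{q}-iv_{\mathbf{p}}$, use the strictly plurisubharmonic function $\eta(\mathbf{x}+i\mathbf{y})=\sqrt{1+\|\mathbf{y}\|^2}$ with $\Psi^*(-d\eta\circ i)=\theta_{S^m}$, and observe that $O(m+1)\subset U(m+1)$ acts complex-linearly on $\C^{m+1}$ preserving $A^m_1$ and $\eta$, with $\Psi$ intertwining this action and the canonical lift of $r$ to $T^*S^m$. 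The details you supply (that $\Psi$ actually lands in $A^m_1$ and is invertible, and the equivariance $\Psi\circ r^*=r\circ\Psi$) are precisely the steps the paper leaves as ``readily checked,'' so the argument is correct and essentially identical.
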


\subsubsection{A tame almost complex structure on $T^*S^m \times P$}

Let $J_P$ be a fixed compatible almost complex structure
on $(P,d\theta)$. For simplicity we will assume that, outside of a compact
set, $(P,d\theta)$ is exact symplectomorphic to half a symplectization,
where $J_P$ moreover is cylindrical. In particular, the latter condition
implies that the Liouville flow $\phi^t \co P \to P$ is a biholomorphism
outside of some compact set.

For $\epsilon>0$ sufficiently small, there is a
tame almost complex structure $J$ on $T^*S^m \times P$ determined by
the requirement that the diffeomorphism
\begin{gather*}
\Phi \co (T^*S^m \times P,i \oplus J_P) \to (T^*S^m \times P,J),\\
(\mathbf{q},\mathbf{p},x) \mapsto
(\mathbf{q},\mathbf{p},\phi^{\epsilon\, g(\mathbf{q})}(x)),
\end{gather*}
is $(i \oplus J_P, J)$--holomorphic. It follows that:

\begin{lem}
\label{lemkaehler}
The action
\[
(r^*,\id_P) \co (T^*S^m \times P,J) \to (T^*S^m \times P,J), \quad r \in
O(m) \subset O(m+1),
\]
is $J$--holomorphic and fixes $L$. Moreover, there are holomorphic
projections
\begin{gather*}
\widetilde{\pi_P} := \pi_P \circ \Phi^{-1} \colon (T^* S^m \times P,J) \to (P,J_P),\\
\pi_{T^*S^m}  \co (T^*S^m \times P,J) \to (T^*S^m,i),
\end{gather*}
where
\begin{gather*}\pi_{T^*S^m}=\pi_{T^*S^m} \circ \Phi^{-1}, \\
\widetilde{\pi_P} (L)=\Pi_L(\Lambda) \subset P,
\end{gather*}
and $\pi_P \co T^* S^m \times P \to P$ denotes the canonical projection.
\end{lem}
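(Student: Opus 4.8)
The plan is to derive the whole lemma from the single fact that, by construction, $J=\Phi_*(i\oplus J_P)$, so that both $\Phi$ and $\Phi^{-1}$ are biholomorphisms between $(T^*S^m\times P,\,i\oplus J_P)$ and $(T^*S^m\times P,\,J)$.

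I would first dispose of the projections, since these require nothing beyond this observation. On the product $(T^*S^m\times P,\,i\oplus J_P)$ the canonical projections $\pi_{T^*S^m}$ and $\pi_P$ are holomorphic onto $(T^*S^m,i)$ and $(P,J_P)$, as the differential of a projection off a split complex structure intertwines $i\oplus J_P$ with the relevant factor. Precomposing a holomorphic map with the biholomorphism $\Phi^{-1}\co (T^*S^m\times P,J)\to(T^*S^m\times P,i\oplus J_P)$ preserves holomorphicity, so $\widetilde{\pi_P}=\pi_P\circ\Phi^{-1}$ is $(J,J_P)$--holomorphic; and since $\Phi$ fixes the $T^*S^m$--coordinate we have $\pi_{T^*S^m}\circ\Phi^{-1}=\pi_{T^*S^m}$, which is therefore $(J,i)$--holomorphic as well. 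For the boundary condition I would read off $\Phi^{-1}(\mathbf q,\mathbf p,y)=(\mathbf q,\mathbf p,\phi^{-\epsilon g(\mathbf q)}(y))$, hence $\widetilde{\pi_P}(\mathbf q,\mathbf p,y)=\phi^{-\epsilon g(\mathbf q)}(y)$, and substitute the parametrization of $L$ from \autoref{secpert}, whose $P$--component is $\phi^{\epsilon g(\mathbf q)}(x)$; the two Liouville flows cancel and the image is exactly $\Pi_L(\Lambda)$.

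For the symmetry statement I would compute the conjugate $\Phi^{-1}\circ(r^*,\id_P)\circ\Phi$. Because $r\in O(m)$ fixes $N$ and $S$ it preserves $g$, so the base diffeomorphism $\bar r$ underlying $r^*$ satisfies $g\circ\bar r=g$; consequently the twisting exponent $\epsilon\,g(\mathbf q)$ appearing in $\Phi$ depends on the point only through the base value of $g$, which $r^*$ leaves unchanged, and the conjugate collapses back to $(r^*,\id_P)$ itself. Since $r^*$ is $i$--biholomorphic by the preceding K\"ahler--isometry lemma, the map $(r^*,\id_P)$ is $(i\oplus J_P)$--holomorphic, and this conjugation invariance then shows it is $J$--holomorphic. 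Invariance of $L$ I would check directly on the parametrization: the cotangent lift $r^*$ sends the fibre covector $-h(x)\epsilon e^{\epsilon g(\mathbf q)}\,dg|_{\mathbf q}$ to the analogous covector over $\bar r(\mathbf q)$, using $\bar r^*dg=dg$ together with $g(\bar r(\mathbf q))=g(\mathbf q)$, while leaving the $P$-- and $\R$--components untouched, so the image is precisely the point of $L$ with parameter $(\bar r(\mathbf q),x)$.

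The only genuinely delicate point is the commutation $\Phi\circ(r^*,\id_P)=(r^*,\id_P)\circ\Phi$ underlying $J$--holomorphicity of the action; everything there hinges on the invariance $g\circ r=g$, which is exactly why the acting group is restricted to the stabilizer of $\{N,S\}$ rather than all of $O(m+1)$. Once this identity is in place the remainder is routine bookkeeping with the explicit formulas for $\Phi$, $r^*$, and the parametrization of $L$.
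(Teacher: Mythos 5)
Your proposal is correct and follows exactly the route the paper intends: the lemma is stated there without proof as an immediate consequence of defining $J$ by declaring $\Phi$ to be an $(i\oplus J_P,J)$--biholomorphism, and your verifications (holomorphicity of the split projections, the cancellation $\phi^{-\epsilon g(\mathbf q)}\circ\phi^{\epsilon g(\mathbf q)}=\mathrm{id}$ on the parametrization of $L$, and the commutation $\Phi\circ(r^*,\id_P)=(r^*,\id_P)\circ\Phi$ coming from $g\circ r=g$) are precisely the omitted routine details. No gaps.
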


We let $\pi_{m+1} \co \C^{m+1} \to \C$ denote the holomorphic projection
onto the last complex coordinate; this projection induces a Lefschetz
fibration
\[
\pi_{m+1} \co A^m_1 \to \C
\]
whose critical points are $N$ and $S$ with the
critical
values $1$ and $-1$, respectively. We will write
\[
\pi := \pi_{m+1} \circ \Psi \co T^*S^m \to \C
\]
for the induced Lefschetz fibration of $T^*S^m$. The image of $(\pi\circ
r^* \circ
\pi_{T^*S^m} )(L) \subset \C$ for any $r \in O(m)$ is shown schematically
in \autoref{figfibration}.

Finally, the set $\pi_{T^*S^m}(L)$ is shown in \autoref{figprojection}
in the case $m=1$.

\begin{figure}[ht!]
\labellist\small
\pinlabel $x$ at 178 47 \pinlabel $iy$ at 85 107
\pinlabel $\pi(\pi_{T^*S^m}(L))$ at 35 77
\pinlabel $1$ at 147 37 \pinlabel $-1$ at 20 37
\pinlabel $e^{\epsilon/2}\epsilon/2\max_\Lambda h$ at 125 80
\pinlabel $e^{\epsilon/2}\epsilon/2\min_\Lambda h$ at 125 15
\endlabellist
\includegraphics[width=200px]{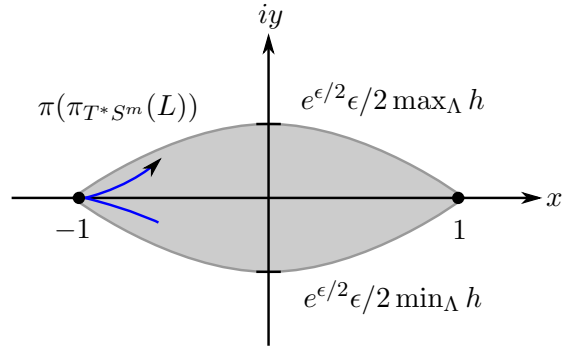}
\caption{The image of $L \subset T^*S^m \times P$ under the
holomorphic projection $\pi\circ \pi_{T^*S^m}$, where $\pi \co T^*S^m \to
\C$ is the Lefschetz fibration as described~above.
%GM: careful of ``above'' if you move this fig.
 Here
$\pi(N)=1$ and $\pi(S)=-1$. The arrow schematically
depicts the~behavior of the boundary near the positive puncture for a
$J$--holomorphic disk having boundary on $L$ and a positive puncture
contained inside
$\pi_{T^*S^m}^{-1}(S)$.}
\label{figfibration}
\end{figure}

\subsection{Transversality results}
\label{sectrans}

Recall
that a pseudo-holomorphic disk
\[
u \co (D^2,\partial D^2) \to (T^*S^m \times P,L)
\]
is said to have a \emph{positive} (resp.\ \emph{negative})
boundary puncture at $p \in \partial D^2$ if $u(p)$ is a
double point of $L$ and
the $z$--coordinate of $L$ jumps to a higher value
(resp.\
lower value) at the puncture when traversing the
boundary of the disk according to the orientation.

The following lemma is important and relates the transversality of a
$J$--holomorphic disk inside $\pi^{-1}(S) \subset T^*S^m \times P$ having
boundary on $L$ with the transversality of its projection to $P$, which
is a $J_P$--holomorphic disk having boundary on $\Pi_L(\Lambda)$. Observe
that the analogous result for the corresponding disks inside $\pi^{-1}(N)
\subset T^*S^m \times P$ is false in general, which is the reason why
computing the full differential $\partial_L$ in terms of $\partial_\Lambda$
is a more difficult problem.

\begin{lem}
\label{lemliftedtrans}
The $J$--holomorphic disk $u=(S,\widetilde{u})$ in $T^*S^m \times P$ having
boundary on~$L$ is transversely cut out if and only if the $J_P$--holomorphic
disk $\widetilde{u}$ having boundary on $\Pi_L(\Lambda)$ is transversely
cut out.
\end{lem}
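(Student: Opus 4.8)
The plan is to exploit the product structure of the situation. Near the critical point $S$ of the Lefschetz fibration $\pi$, the almost complex structure $J$ on $T^*S^m \times P$ is, by construction (\autoref{lemkaehler}), of the product form $i \oplus J_P$ after applying $\Phi^{-1}$; moreover $\widetilde{\pi_P}$ is holomorphic. Since the disk $u=(S,\widetilde u)$ has its entire positive puncture contained in $\pi^{-1}(S)$, and the relevant Reeb chords in $\mathcal{Q}_S$ project bijectively to chords on $\Lambda$ preserving action and index (\autoref{lembijection}), the first step is to set up the linearized $\bar\partial$--operator $D_u$ for $u$ as a boundary-value problem and identify how it splits along the two factors $T^*S^m$ and $P$. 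Because the image of $u$ under $\pi$ is concentrated at the critical value $-1$ (the constant map to $S$ in the first factor, as the notation $(S,\widetilde u)$ suggests), I expect the normal contribution from the $T^*S^m$--direction to be rigid and automatically surjective, so that the deformation theory of $u$ is governed entirely by the $P$--component $\widetilde u$.

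**Key steps.**

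First I would write $D_u$ with respect to a splitting of the pulled-back tangent bundle $u^*T(T^*S^m\times P)$ into a $T^*S^m$--summand and a $P$--summand, using that $J=i\oplus J_P$ near $\pi^{-1}(S)$ and that the projections in \autoref{lemkaehler} are holomorphic; this makes $D_u$ block upper-triangular, with diagonal blocks the linearized operators in each factor. Second, I would analyze the $T^*S^m$--block: because $S$ is a \emph{nondegenerate} critical point of the Lefschetz fibration $\pi_{m+1}$ and the disk sits over this critical value, the relevant operator in the fiber direction is a small, explicitly understood (Kähler, hence regular) operator whose kernel and cokernel are controlled; here the $O(m)$--symmetry from \autoref{lemkaehler} (the action fixing $L$) should be used to rule out unwanted cokernel, showing this block is surjective with the expected index. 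Third, the $P$--block is by definition the linearized operator $D_{\widetilde u}$ for the $J_P$--holomorphic disk $\widetilde u$ with boundary on $\Pi_L(\Lambda)$. Finally, invoking the standard fact that for an upper-triangular operator with one surjective diagonal block the surjectivity of the whole operator is equivalent to surjectivity of the remaining block, I would conclude that $D_u$ is surjective precisely when $D_{\widetilde u}$ is, which is the claimed equivalence of transversality.

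**Main obstacle.**

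The hard part will be the careful treatment of the $T^*S^m$--factor at the critical point $S$: one must verify that the off-diagonal coupling term in $D_u$ genuinely does not obstruct the splitting argument, and that the fiber-direction operator over the Lefschetz critical value contributes no cokernel. This is exactly the place where the situation for $N$ differs (as the excerpt explicitly warns), so the proof cannot be symmetric in $N$ and $S$; the asymmetry must come from the geometry near $S$ versus $N$ — concretely, from the direction in which the boundary of the disk approaches the puncture relative to the vanishing cycle, as indicated schematically in \autoref{figfibration}. I would isolate this in a local model computation near $S$, using the explicit Kähler structure on $A^m_1$ and the $O(m)$--invariance to reduce the fiber operator to a standard model whose regularity is known, and only then assemble the global index/surjectivity statement.
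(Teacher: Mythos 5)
Your overall strategy is the same as the paper's: split the linearized problem along the two factors using the holomorphic projection $\widetilde{\pi_P}$, observe that the base part is $\widetilde{D}_{\widetilde{u}}$ and that $\indx D_u=\indx \widetilde{D}_{\widetilde{u}}$ (so the fiber-direction block has index zero), and reduce the lemma to showing that this fiber block is an isomorphism. The homological algebra you invoke is sound, since the fiber block is the sub-problem (the kernel of $T\widetilde{\pi_P}$) in the exact sequence of boundary value problems, so its surjectivity does give $\operatorname{coker}D_u\simeq\operatorname{coker}\widetilde{D}_{\widetilde{u}}$.

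The gap is in the decisive step. The ingredients you propose for the regularity of the $T^*S^m$--block --- nondegeneracy of the Lefschetz critical point, the K\"ahler structure on $A^m_1$, and the $O(m)$--symmetry --- hold verbatim at the other critical point $N$, where the corresponding statement is false (the paper warns of exactly this before stating the lemma). Hence no argument built only from these ingredients can close the proof; the input that distinguishes $S$ from $N$ is the position of the linearized boundary condition relative to the asymptotics at the positive puncture. Note also that the boundary condition $TL$ does not split as a product, since $L$ is the non-product perturbation of $S^m\times\Lambda$ built from $h$ and the Liouville flow: its fiber-direction part over a boundary point lying above $x\in\Lambda$ is a real line of slope proportional to $h(x)$. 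The paper's argument for triviality of the fiber block is complex-analytic rather than symmetry-based: an element $\zeta$ of its kernel is a holomorphic map into $(T_S(T^*S^m),i)\simeq(\C^m,i)$ whose component $\zeta_m$ is confined by this boundary condition to the double cone $C_+\cup C_-$ of \autoref{figtangentcone}, vanishes at the positive puncture, and is asymptotic there to the lines $y=h_-x$ and $y=h_+x$ from the two sides; the open mapping theorem then forces $\zeta_m\equiv 0$ and hence $\zeta\equiv 0$. Combined with the index equality coming from \autoref{lembijection} and the observation that kernel elements of $\widetilde{D}_{\widetilde{u}}$ lift to kernel elements of $D_u$, this yields both directions of the equivalence. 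You should replace the appeal to symmetry and a ``standard local model'' by such a puncture-asymptotics argument; as written, that step would prove too much.
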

\begin{proof}
We let $\widetilde{D}_{\widetilde{u}}$ and $D_u$ denote the linearizations
of the Cauchy--Riemann operators $\partial_{J_P}$ and $\partial_J$ at
the solutions $\widetilde{u}$ and $u$, respectively. We also linearize
the boundary condition and we use $\ker\widetilde{D}_{\widetilde{u}}$
and $\ker D_u$ to denote the spaces of solutions to the corresponding
boundary value problems. From the definition of the index of a Fredholm
operator it follows that

\begin{gather*}
\dim \ker \widetilde{D}_{\widetilde{u}} \ge \indx \widetilde{D}_{\widetilde{u}},\\
\dim \ker D_u \ge \indx D_u,
\end{gather*}
with equality if and only if the corresponding solution is transversely
cut out.

In this case, one checks that the Fredholm indices for the linearized
boundary value problems satisfy
\[
\indx \widetilde{D}_{\widetilde{u}}=\indx D_u.
\]
This can be seen by an explicit calculation utilizing \autoref{lembijection}
and the index formula for the pseudo-holomorphic disks under consideration;
see eg~\cite{NILSIRTNPO}.

There is a projection
\[
T\widetilde{\pi_P} \co \Gamma(u^*T(T^*S^m \times P)) \to \Gamma(u^*TP)
\]
of sections, induced by the differential of $\widetilde{\pi_P}$.

The ``only if'' part is straight-forward: the kernel of
$\widetilde{D}_{\widetilde{u}}$ lifts to the kernel of $D_u$
under~$T\widetilde{\pi_P}$. Hence, if $\widetilde{u}$ is not transversely
cut out, ie if $\dim \ker \widetilde{D}_{\widetilde{u}}>\indx
\widetilde{D}_{\widetilde{u}}$, it follows that $\dim \ker D_u>\indx
\widetilde{D}_{\widetilde{u}}=\indx D_u$ and $u$ is not transversely cut
out either.\looseness=-1

We proceed to show that $T\widetilde{\pi_P}$ restricts to an injection
\[
T\widetilde{\pi_P}|_{\ker D_u} \co \ker D_u \to \ker
\widetilde{D}_{\widetilde{u}}.
\]
Observe that the sought statement would follow from the
injectivity of $T\widetilde{\pi_P}|_{\ker D_u}$. Namely, $\ker
\widetilde{D}_{\widetilde{u}}=\indx \widetilde{D}_{\widetilde{u}}$
implies that

\[
\ker D_u \le \ker \widetilde{D}_{\widetilde{u}}=\indx
\widetilde{D}_{\widetilde{u}}=\indx D_u
\]
and hence that $\ker D_u=\indx D_u$.

The fact that $T\widetilde{\pi_P}|_{\ker D_u}$ is an injection
is shown as follows (it can be seen as an infinitesimal version
of \autoref{lemtrans}). Assume that we are given $\zeta \in \ker
T\widetilde{\pi_P}|_{\ker D_u}$. The fact that $\zeta$ is in the kernel
of $T\widetilde{\pi_P}$ implies that it can be considered as a holomorphic
map from the disk into
\[
(T_S(T^*S^m),i) \simeq (\C^m,i).
\]
We consider the $j^{\rm th}$ component $\zeta_j$ of this map. There is a constant
$c>0$ for which the following holds. Recall that $h \co \Lambda \to \R$
is the $z$--coordinate of $\Lambda$. Let $p_+$ and $p_-$,
with $h(p_+)>h(p_-)$,
be the two lifts of the double point $p \in \Pi_L(\Lambda)$ which is the
image of the positive puncture of $\widetilde{u}$. Further, let
$$
h_{\mathrm{max}} := c \max_\Lambda h,
\quad
h_{\mathrm{min}} := c \min_\Lambda h,
\quad
h_+ := c \,h(p_+),
\quad
h_- := c \, h(p_-).
$$
For a suitable choice of holomorphic coordinates and constant $c>0$, the
linearized boundary condition implies that $\zeta_j$ is contained inside
the double cone $C_+ \cup C_-$, where
$$
C_+ := \{ x+iy\mid  y \in [h_{\mathrm{min}}x,h_{\mathrm{max}}x],\, x \ge 0 \}
\subset \C
\:\:\text{and}\:\:
C_- := -C_+,
$$
as shown in \autoref{figtangentcone}. Furthermore, $\zeta_m$ necessarily
vanishes at the positive puncture, while it is asymptotic to the line
\begin{itemize}
\item $y=h_- x$ when approaching the positive puncture along the boundary
in the direction of the orientation, and
\item $y=h_+x$ when approaching the positive puncture along the boundary
against the direction of the orientation.
\end{itemize}

The open mapping theorem implies that the interior of $\zeta_m$ is mapped
to either $C_+$ or~$C_-$. Together with the asymptotics of $\zeta_m$ at
the positive puncture, the open mapping theorem now shows that $\zeta_m$
in fact must vanish identically. Hence $\zeta \equiv 0$, which establishes
the injectivity of $T\widetilde{\pi_P}|_{\ker D_u}$.
\end{proof}

\begin{figure}[ht!]
\labellist\small
\pinlabel $x$ at 149 51
\pinlabel $h_+$ [l] at 76 76
\pinlabel $h_{\mathrm{max}}$ [l] at 76 88
\pinlabel $h_-$ [l] at 76 36
\pinlabel $h_{\mathrm{min}}$ [l] at 76 27
\pinlabel $iy$ at 73 110
\pinlabel $1$ at 121 42
\pinlabel $C_+$ at 128 65
\pinlabel $C_-$ at 20 40
\endlabellist
\includegraphics[width=200px]{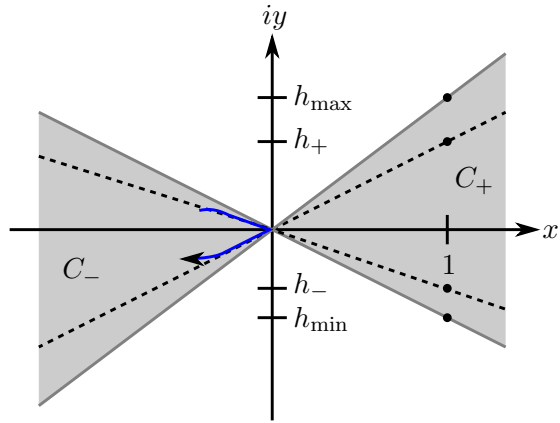}
\caption{The double cone $C_- \cup C_+$ depicts the
projection of the linearized boundary condition near a puncture in
$\pi_{T^*S^m}^{-1}(S)$. The arrow shows the behavior of a solution to the
linearized problem along the boundary near the positive puncture, given
that it is non-vanishing.}
\label{figtangentcone}
\end{figure}

We continue to show that the above lemma
can indeed be applied to
the $J$--holomorphic disks contributing to $\partial_L(s)$ for $s \in
\mathcal{Q}_S$.

\begin{lem}
\label{lemtrans} Let $u$ be a $J$--holomorphic disk in $T^*S^m \times P$
having boundary on $L$ and exactly one positive boundary puncture. If the
positive puncture of $u$ is contained
in~$\pi_{T^*S^m}^{-1}(S)$, then all of $u$ is contained inside
$\pi_{T^*S^m}^{-1}(S)$. In
particular, such disks are in bijective correspondence with the
corresponding $J_P$--holomorphic disks in $P$ with boundary on
$\Lambda$. Finally, given that $J_P$ is regular, it follows that
these $J$--holomorphic disks are transversely cut out as well.
\end{lem}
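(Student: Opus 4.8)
The plan is to exploit the holomorphic projection $\pi\circ\pi_{T^*S^m}\co(T^*S^m\times P,J)\to(\C,i)$ to trap the disk over the critical value $\pi(S)=-1$, and then to pin it to the critical point $S$ itself using the boundary condition. Write $f:=\pi\circ\pi_{T^*S^m}\circ u\co D^2\to\C$ and $v:=\pi_{T^*S^m}\circ u\co D^2\to T^*S^m$; both are holomorphic by \autoref{lemkaehler}. First I would check that $f$ extends continuously over every puncture: since each Reeb chord of $L$ lies in $\pi_{T^*S^m}^{-1}(S)\cup\pi_{T^*S^m}^{-1}(N)$, the disk $u$ is asymptotic at each puncture to a chord whose $\pi_{T^*S^m}$--image is the single point $S$ or $N$, so $f$ tends to $\pi(S)=-1$ or $\pi(N)=1$ there. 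Consequently $f|_{\partial D^2}$ is a continuous map of the connected circle into $\pi(\pi_{T^*S^m}(L))$, which by the construction of $L$ (see \autoref{figfibration}) is contained in the union of two disjoint discs of some small radius $\delta>0$ about $-1$ and $1$. Since the positive puncture maps to $-1$, connectedness forces $f(\partial D^2)$ to lie entirely in the disc about $-1$, and the maximum modulus principle applied to the holomorphic function $f+1$ then gives $f(D^2)\subset\{|w+1|<\delta\}$.

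Next I would pass to local holomorphic coordinates $z_1,\dots,z_m$ on $A^m_1\simeq T^*S^m$ centred at $S$, in which $\pi+1=\tfrac12\sum_j z_j^2+O(|z|^4)$ and the two endpoints of the positive chord project to the zero covector. The previous step confines the boundary of $v$ to the portion of $\pi_{T^*S^m}(L)$ lying over the disc about $-1$, which is a small neighbourhood of $S$; hence each coordinate function $v_j:=z_j\circ v$ is holomorphic with small boundary values, and the maximum modulus principle confines all of $v$ to a neighbourhood of $S$ in which these coordinates are valid. A direct computation from the parametrisation of $L$ shows that, in these coordinates, $\pi_{T^*S^m}(L)$ is swept out by the vectors $\lambda\,\mathbf{q}'$ with $\mathbf{q}'\in\R^m$ and $\lambda$ ranging over a short arc about $1$; in particular each $v_j$ has boundary values in the double cone $C_+\cup C_-$ of \autoref{figtangentcone} and vanishes at the positive puncture, where it is asymptotic to the two bounding rays. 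This is the heart of the matter and the step I expect to be the main obstacle: exactly as in the proof of \autoref{lemliftedtrans}, the open mapping theorem forces the interior of each $v_j$ into a single one of the cones $C_\pm$, which is incompatible with the two distinct boundary asymptotics at the puncture unless $v_j\equiv0$, and the $O(m)$--symmetry of \autoref{lemkaehler} lets one treat every component on the same footing. Thus $v\equiv S$, that is, $u$ is contained in $\pi_{T^*S^m}^{-1}(S)$.

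Finally I would identify $u$ with a disk in $P$. On the fibre $\pi_{T^*S^m}^{-1}(S)=\{S\}\times P$ we have $g(S)=0$, so $\Phi$ restricts to the identity there; hence $J$ restricts to $J_P$ and $\widetilde{\pi_P}=\pi_P$ is a biholomorphism $(\{S\}\times P,J)\to(P,J_P)$ carrying the boundary condition to $\Pi_L(\Lambda)$ by \autoref{lemkaehler}. Therefore $\widetilde{u}:=\pi_P\circ u$ is a genuine $J_P$--holomorphic disk in $P$ with boundary on $\Pi_L(\Lambda)$, and lifting $J_P$--holomorphic disks back through this biholomorphism provides the inverse, so the correspondence $u\leftrightarrow\widetilde{u}$ is bijective, with punctures matched by \autoref{lembijection}. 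The transversality assertion is then immediate from \autoref{lemliftedtrans}: since $u=(S,\widetilde{u})$, it is transversely cut out if and only if $\widetilde{u}$ is, and the latter holds whenever $J_P$ is regular.
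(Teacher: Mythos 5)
The first step of your argument contains a genuine gap. You assert that $\pi(\pi_{T^*S^m}(L))$ is contained in the union of two disjoint small discs about $-1$ and $1$, and you then use connectedness of $\partial D^2$ to confine $f:=\pi\circ\pi_{T^*S^m}\circ u$ near $-1$. That containment is false. The fibre component of $L$ over $\mathbf{q}\in S^m$ is the covector $-h(x)\,d\bigl(e^{\epsilon g}\bigr)\big|_{\mathbf{q}}$, a multiple of $dg(\mathbf{q})$, and computing $\pi=\pi_{m+1}\circ\Psi$ on these points gives $\sqrt{1+\|v_{\mathbf{p}}\|^2}\,q_{m+1}+\tfrac{i}{2}h(x)\epsilon e^{\epsilon g(\mathbf{q})}(1-q_{m+1}^2)$, whose value interpolates continuously between $-1$ (at $\mathbf{q}=S$) and $1$ (at $\mathbf{q}=N$). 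Thus $\pi(\pi_{T^*S^m}(L))$ is the single connected lens-shaped region joining $-1$ to $1$ depicted in \autoref{figfibration}, not two disjoint islands. Concretely, the strips appearing in \autoref{lemindex} and \autoref{lemsign} have boundaries whose $f$-image runs all the way from $1$ to $-1$ inside this set, so no connectedness argument can trap $f(\partial D^2)$ near $-1$, and the maximum-modulus step that follows has no input. (A further sanity check: your step makes no use of which critical point the positive puncture sits over, so if it worked it would equally confine a disk with positive puncture over $N$ to the fibre over $N$, contradicting the existence of those strips.)

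What is actually needed, and what the paper does, is a \emph{local} argument at the positive puncture: near $-1$ the boundary condition $\pi(\pi_{T^*S^m}(L))$ approaches $-1$ only through one side (the arrow in \autoref{figfibration}), so the open mapping theorem applied to the holomorphic map $f$, which tends to $-1$ at the positive puncture, forces $f\equiv -1$ on a neighbourhood of that puncture and hence, by unique continuation, on all of $D^2$. The disk therefore lies in the fibre $\pi^{-1}(-1)$, which under $\Psi$ is the singular quadric $Q=\{z_1^2+\cdots+z_m^2=0\}\cap A_1^m$ with unique singular point $S$; since $\pi_{T^*S^m}(L)\cap\pi^{-1}(-1)=\{S\}$, the maximum principle then pins $\pi_{T^*S^m}\circ u$ to $S$. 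Your second step (local coordinates at $S$ and the cone argument borrowed from \autoref{lemliftedtrans}) is a plausible alternative to this last pinning, and your final paragraph on the bijection and on transversality via \autoref{lemliftedtrans} is fine, but both rest on the confinement near $-1$ that your first step fails to establish; as written the proof does not go through.
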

\begin{proof}
In the case $m=1$, this follows eg~from \cite[Lemma 4.14]{OILCHAELI}. We
here provide a proof of the general case. The idea is to study the
image of $\pi_{T^*S^m} \circ u$ under the Lefschetz fibration $\pi
\co T^*S^m \to \C$. The image of $L$ under this projection is shown in
\autoref{figfibration}.

To that end, we first observe that $\pi \circ \pi_{T^*S^m} \circ u$ is
holomorphic. The
asymptotics at the positive puncture (see the arrow in
\autoref{figfibration} for the projection of the boundary in the case when
the projection is non-zero) together with the open mapping theorem implies
that $\pi \circ \pi_{T^*S^m} \circ u$ is constantly equal
to $-1 \in \C$ in some neighborhood of this puncture, and hence that
$\pi \circ \pi_{T^*S^m} \circ u \equiv -1$.

Under the biholomorphism $\Psi$, the fiber $\pi^{-1}(-1) \subset T^*S^m$
gets identified with the singular quadric
\[
Q:=\{ z_1^2 +\cdots+z_m^2=0\} \cap A_1^m \subset \C^{m+1},
\]
such
 that $S$ is identified with the unique singular point $(0,\ldots,0,-1)
\in Q$. Since $\pi_{T^*S^m} \circ u$ maps into this fiber and
the
boundary is mapped to
\[
\pi_{T^*S^m}(L) \cap \pi^{-1}(-1) =\{S\}
\]
(ie the singular point), the maximum principle now implies that
$\pi_{T^*S^m} \circ u \equiv S$. In other words, $u$ is contained
in $\pi_{T^*S^m}^{-1}(S)$.

Conversely, for each $J_P$ holomorphic disk $\widetilde{u}$ in $P$
having boundary on $\Pi_L(\Lambda)$ one immediately constructs a
$J$--holomorphic disk $u=(S,\widetilde{u})$ in $T^*S^m \times P$ having
boundary on $L$.

The transversality statement follows from \autoref{lemliftedtrans} above.
\vadjust{\goodbreak}
\end{proof}

\begin{lem}
\label{lemindex} A non-constant $J$--holomorphic disk $u$ in $T^*S^m \times P$
with boundary on~$L$ and exactly one positive puncture contained in
$\pi_{T^*S^m}^{-1}(N)$ has index
\begin{equation}
\label{eqindexeq}
\mathrm{index}(u)=\mathrm{index}(\widetilde{\pi_P} \circ u)+m- N_um,
\end{equation}
where $N_u$ is the number of negative punctures of $u$ contained in
$\pi_{T^*S^m}^{-1}(N)$.

If $\widetilde{\pi_P} \circ u$ is non-constant, under the additional
assumption that $J_P$ is a regular almost complex structure it follows that
\begin{equation}\label{eqindexineq}
\mathrm{index}(u) \ge m-N_um.
\end{equation}
If $\widetilde{\pi_P} \circ u \equiv p \in P$ is constant, it follows that
$u$ is a pseudo-holomorphic
strip whose negative puncture is contained in $\pi_{T^*S^m}^{-1}(S)$,
both
of whose punctures correspond to~$p \in \mathcal{Q}(\Lambda)$ and whose
index satisfies
\begin{equation}
\label{eqindexineqtw}
\mathrm{index}(u)=m-1 \ge 0.
\end{equation}
In the case $m=1$ this strip is, moreover, transversely cut out.
\end{lem}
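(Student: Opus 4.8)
The plan is to reduce every assertion to the projected disk $\widetilde{u}:=\widetilde{\pi_P}\circ u$, which by \autoref{lemkaehler} is a $J_P$--holomorphic disk in $P$ with boundary on $\Pi_L(\Lambda)$, and to use the fact that each puncture of $u$ lies either in $\pi_{T^*S^m}^{-1}(N)$ or in $\pi_{T^*S^m}^{-1}(S)$, hence corresponds under the canonical bijections of \autoref{lembijection} to a Reeb chord of $\Lambda$. I would first establish the index formula \eqref{eqindexeq}. Writing the standard expression for the dimension of the moduli space (equivalently, the Fredholm index) in terms of the gradings of the punctures, $\mathrm{index}(u)=|a|-\sum_i|b_i|-1$ and $\mathrm{index}(\widetilde{u})=|\widetilde{a}|-\sum_i|\widetilde{b}_i|-1$, I invoke \autoref{lembijection}: a puncture lying over $N$ has grading exactly $m$ higher than the corresponding chord of $\Lambda$, while a puncture lying over $S$ has the same grading. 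Since the unique positive puncture lies over $N$ by hypothesis it contributes $+m$ to the difference, each of the $N_u$ negative punctures over $N$ contributes $-m$, and the negative punctures over $S$ contribute $0$. Summing gives $\mathrm{index}(u)-\mathrm{index}(\widetilde{u})=m-N_um$, which is \eqref{eqindexeq}.

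The inequality \eqref{eqindexineq} is then immediate. When $\widetilde{u}$ is non-constant and $J_P$ is regular, the moduli space of $J_P$--holomorphic disks with boundary on $\Pi_L(\Lambda)$ containing $\widetilde{u}$ is transversely cut out, and hence is a non-empty smooth manifold of dimension $\mathrm{index}(\widetilde{u})\ge 0$. Substituting this bound into \eqref{eqindexeq} yields $\mathrm{index}(u)\ge m-N_um$.

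For the constant case, suppose $\widetilde{u}\equiv p\in P$. Then $p$ is the double point underlying the chord hit by the positive puncture, and since $\widetilde{\pi_P}$ is holomorphic, $u$ maps into the fiber $\widetilde{\pi_P}^{-1}(p)$, which by \autoref{lemkaehler} is biholomorphic to $(T^*S^m,i)$; there $u$ becomes a holomorphic disk whose boundary lies on the two sheets of $L$ over $p$. From the parametrization in \autoref{secpert} these two sheets are graphs of the covectors $-h(p_\pm)\,\epsilon\, e^{\epsilon g}\,dg$, so they coincide exactly where $dg=0$, namely over the critical points $N$ and $S$ of $g$; these are precisely the two lifts of $p$ in $\mathcal{Q}_N$ and $\mathcal{Q}_S$. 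To determine the puncture structure I would run an action argument: by \autoref{lembijection} the positive puncture (over $N$) has action $e^\epsilon\mathfrak{a}$, where $\mathfrak{a}>0$ is the action of $p$ on $\Lambda$; a negative puncture over $N$ has action $e^\epsilon\mathfrak{a}$, and a negative puncture over $S$ has action $\mathfrak{a}$. Positivity of the energy $e^\epsilon\mathfrak{a}-\sum_i(\text{action of }b_i)$ of the non-constant disk $u$ forces $N_u=0$ and at most one negative puncture over $S$ once $\epsilon$ is small enough that $e^\epsilon<2$, while non-constancy rules out the case of no negative puncture; hence $u$ is a strip whose negative puncture lies over $S$, both of whose punctures correspond to $p\in\mathcal{Q}(\Lambda)$. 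Finally, the grading formula $\mathrm{index}(u)=|a|-|b|-1$ applied to the positive puncture $a$ over $N$ (grading $|p|+m$) and the negative puncture $b$ over $S$ (grading $|p|$) gives $\mathrm{index}(u)=m-1\ge 0$, which is \eqref{eqindexineqtw}.

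The step I expect to be the main obstacle is the transversality of this strip when $m=1$, since it cannot be read off from the index bookkeeping alone. Here $T^*S^1$ is a cylinder, the Lefschetz fibration $\pi$ has the two critical points $N$ and $S$, and the two boundary sheets are explicit sections meeting only over $N$ and $S$. I would identify the index-$0$ strips with the standard holomorphic bigons of this model, equivalently with the two gradient trajectories of $g$ on $S^1$ from $N$ to $S$, and verify surjectivity of the linearized $\bar\partial$--operator by an explicit computation in this model, in the same spirit as the $m=1$ case invoked in the proof of \autoref{lemtrans} via \cite[Lemma 4.14]{OILCHAELI}.
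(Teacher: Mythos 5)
Your proposal follows the paper's proof essentially step for step: \eqref{eqindexeq} is obtained by combining the index formula from \cite{NILSIRTNPO} with the grading shifts of \autoref{lembijection}; \eqref{eqindexineq} follows from $\mathrm{index}(\widetilde{\pi_P}\circ u)\ge 0$, which holds by regularity of $J_P$ together with the holomorphicity of the projection from \autoref{lemkaehler}; the constant case is handled by viewing $u$ as a holomorphic disk in the fiber over the double point $p$ with boundary on the two graphical sheets; and transversality for $m=1$ is checked by an explicit computation in the one-dimensional model, exactly as the paper does via the embedded strip of \autoref{figprojection}. The one place where your stated justification would not survive scrutiny is the exclusion of the case of \emph{zero} negative punctures in the constant-projection analysis: a non-constant disk with one positive puncture and no negative punctures has positive energy $e^{\epsilon}\mathfrak{a}>0$, so neither energy positivity nor non-constancy rules it out (such disks do occur in general --- they are precisely what produce constant terms in the differential of an acyclic DGA). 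The correct reason in this local model is that the boundary arc between consecutive punctures must lift continuously to a single sheet of $\Gamma_+\cup\Gamma_-$, the graphs of $-h(p_\pm)\,d\bigl(e^{\epsilon g}\bigr)$, while the asymptotics at the transverse corner force the two boundary arcs adjacent to the unique puncture to lie on \emph{different} sheets; with a single puncture the boundary consists of one arc, and since $\Gamma_+\neq\Gamma_-$ meet only over $N$ and $S$, this is impossible. With that one-line repair (a step the paper itself leaves entirely implicit), your argument is complete; indeed your energy bookkeeping ruling out negative punctures over $N$ and more than one negative puncture over $S$ is more explicit than what the paper records.
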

\begin{proof}
Formula \eqref{eqindexeq} follows from a simple index calculation utilizing
\autoref{lembijection} and the index formula in \cite{NILSIRTNPO}.

Suppose that $\widetilde{\pi_P} \circ u$ is not constant. The inequality
\eqref{eqindexineq} follows from \eqref{eqindexeq} together with
the inequality $\mathrm{index}(\widetilde{\pi_P} \circ u) \ge 0$. To see
the latter inequality, recall that the projection
\[
\widetilde{\pi_P}  \co (T^*S^m \times P,L) \to (P,\Pi_L(\Lambda))
\]
is $(J,J_P)$--holomorphic by \autoref{lemkaehler}, and that $J_P$ is regular
by assumption, and hence that the index of the solution is equal to the
dimension of the moduli space
in which it is contained.

Suppose that $\widetilde{\pi_P} \circ u$ is constant. First,  it follows that
we have $u=(\widetilde{u},p)$, where $p \in \Pi_L(\Lambda)$ is a double point
and $\widetilde{u} \co D^2 \to T^*S^m$ is holomorphic. It follows that $u$
must be a strip whose positive puncture
(resp.\ negative puncture) is
the puncture in~$\mathcal{Q}_N$ (resp.\ $\mathcal{Q}_S$) corresponding
to $p$. From this fact we get \eqref{eqindexineqtw}.

Finally, the transversality claim in the case $m=1$ follows by an
explicit calculation
that is standard. To that end, observe that the
strip $\widetilde{u}$ is an embedding of a shaded strip, as shown in
\autoref{figprojection}.
\end{proof}

Recall that a \emph{broken $J$--holomorphic disk} consists of a connected
directed tree satisfying the following conditions. Let $V$ and $E$ denote
the vertex and edge sets of the tree, respectively. First, each vertex $v
\in V$ is assigned a $J$--holomorphic disk $u_v$ with exactly one positive
puncture. Second, each edge $e \in E$, also called a \emph{node} of the
broken configuration, is assigned a negative puncture $q_e$ of $u_v$ and a
positive puncture $p_e$ of $u_w$, where $v$ and $w$ are the starting point
and endpoint of $e$, respectively, and for which $u_v(q_e)=u_w(p_e)$ is
required to hold. By a positive (respectively, negative) puncture of a
broken $J$--holomorphic disk we mean a positive (respectively, negative)
puncture of one of the involved disks which does not correspond to any node.

\begin{lem}
\label{lemunbroken} Assume that $J_P$ is regular. A broken $J$--holomorphic
disk $u$ in $T^*S^m \times P$ with boundary on $L$, exactly one positive
puncture contained in $\pi_{T^*S^m}^{-1}(N)$, and all of its negative
punctures contained in $\pi_{T^*S^m}^{-1}(S)$, must be of positive index.
\end{lem}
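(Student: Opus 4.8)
The plan is to compute the total index of the broken configuration as the sum of the indices of its component disks, corrected by the number of nodes, and then estimate this quantity using \autoref{lemtrans} and \autoref{lemindex}. Recall that the formal dimension of a broken $J$--holomorphic disk built on a tree with vertex set $V$ and node set $E$ equals
\[
\mathrm{index}(u)=\sum_{v \in V}\mathrm{index}(u_v)+|E|=\sum_{v \in V}\mathrm{index}(u_v)+(|V|-1),
\]
the last equality holding because the underlying graph is a tree. I would partition $V=V_N \sqcup V_S$ according to whether the positive puncture of $u_v$ lies in $\pi_{T^*S^m}^{-1}(N)$ or in $\pi_{T^*S^m}^{-1}(S)$. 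The root lies in $V_N$, since the unique positive puncture of the whole configuration is assumed to lie over $N$ and every non-root vertex has its positive puncture matched to a node.

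The first step is to pin down the combinatorics. By \autoref{lemtrans}, every disk $u_v$ with $v \in V_S$ is entirely contained in $\pi_{T^*S^m}^{-1}(S)$, so all of its punctures lie over $S$; moreover such a disk is transversely cut out, whence $\mathrm{index}(u_v) \ge 0$. Consequently, if $w \in V_N$ is not the root, then the negative puncture of its parent matching the positive puncture of $u_w$ lies over $N$, so the parent cannot lie in $V_S$ and must itself lie in $V_N$. Thus $V_N$ spans an ancestor-closed subtree containing the root, with exactly $|V_N|-1$ nodes, and a negative puncture of $u_v$ with $v \in V_N$ lies over $N$ if and only if it is a node joining $v$ to a child in $V_N$ (it cannot be an unmatched negative puncture, since those all lie over $S$ by hypothesis). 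In particular, the quantity $N_{u_v}$ of \autoref{lemindex} counts precisely the children of $v$ lying in $V_N$, and the trivial strips of type \eqref{eqindexineqtw}, having their unique negative puncture over $S$, are leaves of this subtree.

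The second step is the estimate. Writing $a$ and $b$ for the numbers of vertices of $V_N$ with constant and non-constant projection $\widetilde{\pi_P} \circ u_v$, respectively, \autoref{lemindex} gives $\mathrm{index}(u_v)=m-1$ in the constant case and $\mathrm{index}(u_v) \ge m-N_{u_v}m$ in the non-constant case by \eqref{eqindexineq}, using that $J_P$ is regular. Since the trivial strips contribute no $N$--negative punctures, the tree relation for the $V_N$--subtree reads $\sum_{v \in V_N} N_{u_v}=|V_N|-1=a+b-1$, and summing the bounds telescopes to $\sum_{v \in V_N}\mathrm{index}(u_v) \ge m-a$. Combining the three contributions,
\[
\mathrm{index}(u) \ge \underbrace{0}_{V_S} + \underbrace{(m-a)}_{V_N} + \underbrace{(|V_S|+a+b-1)}_{\text{nodes}} = m+|V_S|+b-1 .
\]
It remains to observe that a genuinely broken configuration cannot consist solely of trivial strips: such a strip has its only negative puncture over $S$ and hence cannot be the parent in any node, so the presence of at least one node forces $|V_S|+b \ge 1$. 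Therefore $\mathrm{index}(u) \ge m \ge 1>0$. (An unbroken disk satisfying the hypotheses has $N_u=0$, so \eqref{eqindexineq} directly yields $\mathrm{index}(u)\ge m$ as well.)

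The main obstacle, and the point where the argument is genuinely global rather than vertex-by-vertex, is the bookkeeping in the displayed estimate. The disks governed by \eqref{eqindexineq} with several negative punctures over $N$ may have arbitrarily negative index, and it is precisely the $+1$ contributed by each corresponding node in the gluing formula that compensates them; a naive sum of the component indices would fail to be positive. Getting the telescoping right — identifying $\sum_v N_{u_v}$ with the number of edges of the $V_N$--subtree and matching each negative $N$--puncture to a child — is the crux, and it relies essentially on the rigidity furnished by \autoref{lemtrans}, namely that a disk with positive puncture over $S$ never reaches the fiber over $N$.
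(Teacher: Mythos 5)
Your argument is correct and follows essentially the same route as the paper's proof: both compute the index of the broken configuration as the sum of the component indices plus the number of nodes, bound the contribution of the disks with positive puncture over $S$ below by zero via \autoref{lemtrans}, and use \autoref{lemindex} together with the identification of $\sum_v N_{u_v}$ with the number of edges of the $N$--subtree (the paper phrases this as $\nu \ge \sum_i N_{u_i}$) to make the telescoping work. One small imprecision: a trivial strip \emph{can} be the parent in a node, namely of a child attached at its negative puncture over $S$, which then lies in $V_S$; but this does not affect your conclusion that a genuinely broken configuration forces $|V_S|+b\ge 1$.
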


\begin{proof}
Suppose that the broken disk consists of the $J$--holomorphic discs
$\{u_i\}_{i=1}^{\nu+1}$ and $\{v_i\}_{i=1}^\mu$, where $u_i$ has its positive puncture
contained in $\pi_{T^*S^m}^{-1}(N)$ and $v_i$ has its positive puncture
contained in $\pi_{T^*S^m}^{-1}(S)$. Furthermore, we may order the disks
$\{u_i\}$ so that $\widetilde{\pi_P}\circ u_i$ is constant if and only
if $k < i \le \nu+1$ for some $0\le k \le \nu+1$. Observe that $\mu+\nu$
is the total number of nodes of the broken disk.

The index of the broken disk is computed to be
\[
I:=\nu+\sum_{i=1}^{\nu+1} \mathrm{index}(u_i)+\mu+\sum_{i=1}^\mu
\mathrm{index}(v_i),
\]
where \autoref{lemtrans} implies that $\mu + \sum_{i=1}^\mu
\mathrm{index}(v_i) \ge 0$.

In the case $k=0$ \autoref{lemindex} implies that $\nu=0$ and, since
$\mu+\nu>0$ by assumption, it follows that $\mu>0$. In this case,
\eqref{eqindexineqtw} implies that
\[
I \ge \mu + \mathrm{index}(u_0)=\mu + (m-1)>0.
\]

In the case $k>0$, \eqref{eqindexineq} and \eqref{eqindexineqtw}
show that
\begin{eqnarray*}
\lefteqn{I \ge \nu+\sum_{i=1}^{\nu+1} \mathrm{index}(u_i)}  \\
& \ge & \nu +\sum_{i=1}^k(m-mN_{u_i})+(\nu+1-k)(m-1) \\
& = & \nu +m\left(k-\sum_{i=1}^k N_{u_i}\right)+(\nu+1-k)(m-1).
\end{eqnarray*}
Finally, the assumption that all negative punctures of the broken disk are
contained in $\pi_{T^*S^m}^{-1}(S)$ together with the fact that the disks
$v_i$ have all negative punctures contained in $\pi_{T^*S^m}^{-1}(S)$ by
\autoref{lemtrans}
gives the inequality $\nu \ge \sum_{i=1}^kN_{u_i}$. In
conclusion, we have
\[\nu+\sum_{i=1}^{\nu+1} \mathrm{index}(u_i) \ge \nu+m(k-\nu)+(\nu+1-k)(m-1)=m-1+k>0.\]
\end{proof}

\subsection{A sign computation in the case $m =1$}

\begin{lem}
\label{lembdy} Let $u$ be a $J$--holomorphic disk in $T^* S^1 \times P$ having
boundary on $L$. If the projection $\pi_{T^*S^1} \circ u$ is non-constant,
then the image of $\pi_{T^*S^1} \circ u$ intersects the interior of exactly
one of the two subsets
\begin{gather*}
T^*\{ (\cos \theta,\sin \theta); \: \pi/2 \le \theta \le 3\pi/2 \} \subset T^*S^1, \\
T^*\{ (\cos \theta,\sin \theta); \: -\pi/2 \le \theta \le \pi/2 \} \subset T^*S^1.
\end{gather*}
\end{lem}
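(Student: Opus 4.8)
The plan is to project the disk to the base of the Lefschetz fibration $\pi\co T^*S^1\to\C$ and to run a maximum-principle argument there. Write $v:=\pi_{T^*S^1}\circ u\co D^2\to T^*S^1$, which is $i$--holomorphic by \autoref{lemkaehler}, and set $w:=\pi\circ v=\pi_{m+1}\circ\Psi\circ v\co D^2\to\C$, which is then holomorphic as well. I first re-express the two subsets in the statement through the holomorphic coordinate $z_1$ on $A^1_1$. From the explicit form of $\Psi$ one reads off $\mathfrak{Re}(z_1\circ\Psi)(\mathbf q,\mathbf p)=\sqrt{1+\|v_{\mathbf p}\|^2}\,\cos\theta$, where $\mathbf q=(\cos\theta,\sin\theta)$. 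Hence $\mathfrak{Re}(z_1)$ has the sign of $\cos\theta$, so the (topological) interiors of the right and left half cotangent bundles in the statement are exactly $\{\mathfrak{Re}(z_1)>0\}$ and $\{\mathfrak{Re}(z_1)<0\}$, and these two connected open sets are separated by $\{\mathfrak{Re}(z_1)=0\}=T^*_N S^1\cup T^*_S S^1$.

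Next I relate this separating set to the fibration. Since $z_1^2+z_2^2=1$ on $A^1_1$, one has $\mathfrak{Re}(z_1)=0$ precisely when $z_2=\pi\in(-\infty,-1]\cup[1,\infty)$; in other words $\pi^{-1}\big((-\infty,-1]\cup[1,\infty)\big)=\{\mathfrak{Re}(z_1)=0\}$, with the two rays emanating from the critical values $\pi(S)=-1$ and $\pi(N)=1$. Consequently, any subset of $T^*S^1$ whose image under $\pi$ avoids these two rays is contained in $\{\mathfrak{Re}(z_1)>0\}\sqcup\{\mathfrak{Re}(z_1)<0\}$, i.e.\ in the disjoint union of the two open half cotangent bundles.

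The crux is a boundary estimate followed by the maximum principle. I claim that $\mathfrak{Re}(w)\in[-1,1]$ along $\partial D^2$, with the values $\pm1$ attained only where $v$ hits $N$ or $S$. Indeed, using the parametrization of $L=L_\epsilon$ from \autoref{secpert}, the $T^*S^1$--component of a point of $L$ is the covector with momentum $c=-\tfrac12 h\,\epsilon e^{\epsilon g}\cos\theta$, so that $\mathfrak{Re}(z_2)=\sqrt{1+c^2}\,\sin\theta$; a short computation shows $(1+c^2)\sin^2\theta\le 1$ for $\epsilon$ small, with equality only at $\theta=\pm\pi/2$. This is exactly the pinching of $\pi(L)$ onto the critical values shown in \autoref{figfibration}. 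Since $v(\partial D^2)\subset\pi_{T^*S^1}(L)$, the harmonic function $\mathfrak{Re}(w)$ satisfies $\mathfrak{Re}(w)\le 1$ on $\partial D^2$, hence $\mathfrak{Re}(w)\le 1$ on $D^2$; were equality to hold at an interior point, then $w\equiv 1$ and $v\equiv N$ would be constant, contradicting the hypothesis that $\pi_{T^*S^1}\circ u$ is non-constant. Thus $-1<\mathfrak{Re}(w)<1$ on the interior of $D^2$, so $w$ maps the interior into the open strip $\{|\mathfrak{Re}|<1\}$, which is disjoint from the two rays. By the previous paragraph $v$ therefore maps $\mathrm{int}(D^2)$ into $\{\mathfrak{Re}(z_1)>0\}\sqcup\{\mathfrak{Re}(z_1)<0\}$; by connectedness the image of the interior lies in a single one of these, say $\{\mathfrak{Re}(z_1)>0\}$, which shows the image meets the interior of the right half cotangent bundle. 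Finally, by continuity $\mathfrak{Re}(z_1)\circ v\ge 0$ on all of $D^2$, so the full image is disjoint from $\{\mathfrak{Re}(z_1)<0\}$; hence the interior of exactly one of the two subsets is met.

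I expect the main obstacle to be the boundary estimate $\mathfrak{Re}(z_2)\in[-1,1]$ on $L$, that is, verifying that the projected image $\pi(L)$ is confined to the closed unit strip and pinches onto the critical values $\pm1$ exactly over $N$ and $S$, since this is the only step where the explicit geometry of the perturbation and the smallness of $\epsilon$ enter. Once this estimate is in hand, the double-cover description of the two half cotangent bundles as the components of $\pi^{-1}$ of the slit plane, together with the maximum principle, closes the argument without any need to analyze directly how the boundary of $u$ passes through the fibers over $N$ and $S$.
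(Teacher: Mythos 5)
Your proof is correct and takes essentially the same route as the paper's: both rest on the holomorphicity of the projection to $T^*S^1$ together with the fact that $\pi_{T^*S^1}(L)$ meets the separating set $T^*_N S^1\cup T^*_S S^1$ only in the zero covectors over $N$ and $S$, the paper invoking the open mapping theorem directly in $T^*S^1$ (with \autoref{figprojection} standing in for your boundary estimate), while you compose further with the Lefschetz fibration $\pi$ and apply the maximum principle to $\mathfrak{Re}(\pi\circ v)$. Your explicit verification that $|\mathfrak{Re}(z_2)|\le 1$ holds on $\pi(\pi_{T^*S^1}(L))$ with equality only at the points over $N$ and $S$ is precisely the content the paper delegates to its figures.
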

\begin{proof}
By \autoref{lemtrans}, we may suppose that the positive puncture
of the disk is contained in $\pi_{T^*S^1}^{-1}(N)$, since it
would otherwise have a constant projection to $S \in\nobreak T^*S^1$.%

Further, unless its projection to $T^*S^1$ is constant, an interior point
of the disk cannot be mapped into $\pi_{T^*S^1}^{-1}\{ S
\cup N\}$, as follows from the open mapping theorem applied to the holomorphic
projection $\pi_{T^*S^1}\circ u$. To that end, we refer to
\autoref{figprojection} for a
description of the image of the boundary condition $\pi_{T^*S^1}(L)$ under
this projection.
\end{proof}

\begin{figure}[ht!]
\labellist\small
\pinlabel $\pi_{T^*S^1}(L)$ at 192 90
\pinlabel $S^1$ at 165 120
\pinlabel $T^*S^1$ at 173 25
\pinlabel $N$ at 73 160
\pinlabel $S$ at 73 80
\pinlabel $r$ at 317 129
\pinlabel $S^1$ at 380 70
\pinlabel $N$ at 318 200
\pinlabel $S$ at 314 40
\endlabellist
\includegraphics[width=260px]{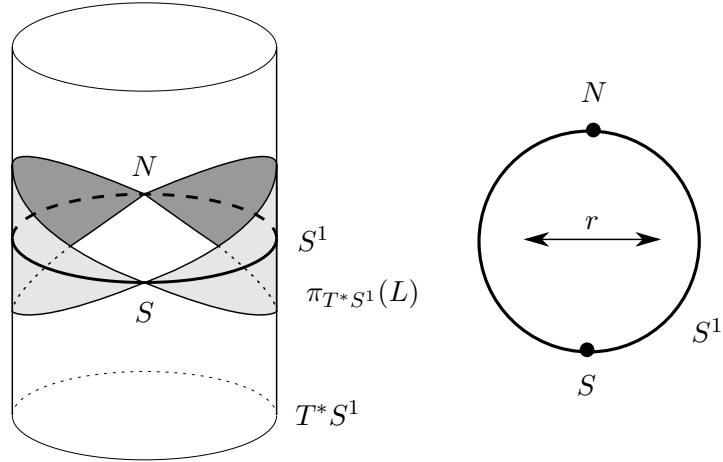}
\label{figprojection}
\caption{The image of $L \subset P \times T^*S^1$ under the
projection $\pi_{T^*S^1}$. Observe that $\pi_{T^*S^1}(L)$ is invariant
under $r^*$, where $r \in O(1)$.}
\vskip-10.5pt
\end{figure}

\begin{lem}
\label{lemsign}
The $J$--holomorphic involution $(r^*,\mathrm{id}_P)$ acts non-trivially
on the $J$--holomorphic disks in $T^*S^1 \times P$ having boundary on $L$
and non-constant projection to~$T^*S^1$.

In the case when $\Lambda$ is spin and the $J$--holomorphic disk
$u=(\widetilde{u},p)$, $p \in P$, is a non-trivial strip in $T^*S^1 \times
P$ with boundary on $L$, the involution applied to $u$ moreover reverses
the coherent orientation.
\end{lem}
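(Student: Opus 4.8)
The plan is to treat the two assertions in turn, using the explicit form of the reflection $r\in O(1)\subset O(2)$ and of its fixed-point set. For $m=1$ the only non-trivial element of $O(1)$ is the reflection $r(x,y)=(-x,y)$ in the axis through $N$ and $S$; its cotangent lift $r^*$ fixes precisely the two fibres of $T^*S^1$ over $N$ and $S$ and interchanges the two half-cotangent-bundles over the arcs $\pi/2<\theta<3\pi/2$ and $-\pi/2<\theta<\pi/2$. By \autoref{lemkaehler} the involution $\iota:=(r^*,\id_P)$ is $J$--holomorphic and preserves $L$, so $u\mapsto\iota\circ u$ defines an involution on the moduli spaces of $J$--holomorphic disks with boundary on~$L$.

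For the first assertion I would argue directly from \autoref{lembdy}. If $\pi_{T^*S^1}\circ u$ is non-constant, then its image meets the interior of exactly one of the two half-cotangent-bundles appearing in that lemma. Since $r^*$ interchanges these two halves, the image of $\pi_{T^*S^1}\circ(\iota\circ u)$ meets the interior of the other half instead. Hence $u$ and $\iota\circ u$ have distinct images under $\pi_{T^*S^1}$ and cannot agree even after reparametrisation, so $\iota$ acts freely---in particular non-trivially---on the disks with non-constant projection to~$T^*S^1$.

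For the second assertion I would compare the coherent orientations of the rigid strip $u=(\widetilde u,p)$ and of $\iota\circ u$, where $\widetilde u\co D^2\to T^*S^1$ is a non-constant holomorphic strip with positive puncture over $N$ and negative puncture over $S$ (see \autoref{figprojection}) and $p\in\Pi_L(\Lambda)$ is constant. Since $\iota$ restricts to the identity on the $P$--factor, the induced map on the determinant line of the linearised operator $D_u$ is governed entirely by the action of $r^*$ on the $T^*S^1$--component; the comparison therefore reduces to how $r^*$ acts on $\det D_{\widetilde u}$, the determinant line of the operator associated with $\widetilde u$ in $T^*S^1$. Orienting this determinant line by linearly gluing the standard capping operators at the two punctures and trivialising the boundary condition along $\pi_{T^*S^1}(L)$ via the chosen spin structure on $L$, the essential geometric input is that near each fixed puncture the differential of $r^*$ acts on $T(T^*S^1)\cong\C$ by $-\id=e^{i\pi}$, because $r$ reverses the tangent direction to $S^1$ at each of its fixed points and $r^*$ is its cotangent lift. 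This rotation by $\pi$ preserves each Lagrangian boundary line as a subspace while reversing its orientation.

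The hard part will be the ensuing sign bookkeeping. Although it is transparent that $r$ is orientation-reversing on $S^1$ and that $r^*$ acts by $-\id$ on the two fixed fibres, turning this into the precise statement that $\iota$ reverses---rather than preserves---the coherent orientation requires fixing the orientation conventions, writing down an explicit local model for the capping operators over $N$ and $S$, and verifying that the combined effect of the reversal at the two fixed punctures together with the action of the orientation-reversing $r$ on the spin structure of the $S^1$--factor amounts to an odd number of sign changes. Granting this, $u$ and $\iota\circ u$ carry opposite coherent orientations, which is precisely the asserted reversal.
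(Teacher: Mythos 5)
Your treatment of the first assertion is exactly the paper's: \autoref{lembdy} shows that $\pi_{T^*S^1}\circ u$ and $\pi_{T^*S^1}\circ(r^*\circ u)$ meet the interiors of the two different half-cotangent-bundles, so the involution cannot fix a disk with non-constant projection. No issues there.

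For the second assertion, however, your argument has a genuine gap: the entire content of the claim is the sign, and you explicitly defer it (``The hard part will be the ensuing sign bookkeeping \dots Granting this \dots''). The local observations you do make are correct --- $dr^*$ acts by $-\id$ on the fibres of $T(T^*S^1)$ over $N$ and $S$, and the $P$--component of the linearised problem is untouched --- but they do not by themselves determine whether the net effect on the coherent orientation is a sign change, since the comparison of $u$ with $\iota\circ u$ involves two different points of the moduli space: one must identify their determinant lines via capping operators and the trivialisation coming from the spin structure before a relative sign even makes sense, and it is precisely in that identification that the parity is decided. The paper avoids the local model altogether: it first uses \autoref{lembdy} to conclude that $u$ and $r^*\circ u$ have \emph{homotopic capped boundaries}, so their determinant lines are canonically comparable, and then invokes the fact that $(r^*,\mathrm{id}_P)$ restricts to an \emph{orientation-reversing} involution of $L$ (it acts as the reflection $r$ on the $S^1$--factor and as the identity on $\Lambda$) together with the fact that it does not fix $u$; reversing the orientation of the Lagrangian boundary condition is what flips the coherent orientation. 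If you want to complete your route instead, you would need to write down the capping operators at the two punctures explicitly and verify that the total number of sign changes is odd --- which is exactly the step you have left out.
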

\begin{proof}
The first claim follows immediately from \autoref{lembdy}.

To check the effect on the coherent orientation of the moduli space containing
a $J$--holomorphic strip $u$ as in the claim, we argue as follows. By
\autoref{lembdy} it follows that $u$ and $r^* \circ u$ have
homotopic boundaries after ``capping off'' the boundary with a choice
of capping path at each puncture. The claim follows since the involution
$(r^*,\mathrm{id}_P)$ restricts to an orientation-reversing
involution on $L$ and
since it does not fix~$u$.
\end{proof}

\subsection{Proof of \texorpdfstring{\autoref{maininclinviig}}{Theorem 3.1}}
\label{proof}

We choose the perturbation $L$ of $S^m \times \Lambda$ as
constructed in \autoref{secpert}, for which we have the decomposition
$\mathcal{Q}(L)=\mathcal{Q}_N \sqcup \mathcal{Q}_S$ of the Reeb chords. The
grading-preserving
bijection between the Reeb chords $\mathcal{Q}_S$ on $L$ and the
Reeb chords on $\Lambda$, as established in \autoref{lembijection}, thus
induces a graded unital algebra inclusion
\[
\iota \co \mathcal{A}_\bullet(\Lambda) \hookrightarrow
\mathcal{A}_\bullet(L).
\]
Perturb the almost complex structure $J$ constructed in \autoref{secacs}
to a regular, almost complex structure $J'$. For a
sufficiently small such perturbation, \autoref{lemtrans} implies
that $\iota$ is a chain map. Here we have used the fact that the
number of solutions in a transversely cut-out rigid moduli space
stays the same after a sufficiently small such perturbation.

Observe that $\iota$ has a left inverse
\[
\pi \co \mathcal{A}_\bullet(L) \to \mathcal{A}_\bullet(L) /\langle
\mathcal{Q}_N\rangle =\mathcal{A}_\bullet(\Lambda)
\]
on the algebra level, which is induced by the quotient with the two-sided
ideal generated by $\mathcal{Q}_N$. We will show that, given that $J'$
is sufficiently close to $J$, $\pi$ is in fact a chain map, from which the
theorem follows.

The fact that $\pi$ is a chain map is equivalent to $\partial_L$ preserving
the two-sided ideal $\langle \mathcal{Q}_N\rangle$. To that end, for each
$c \in \mathcal{Q}_N$, we need to show that $\partial_L(c)$ may be assumed
to consist of a sum of words all which contain at least one letter from
$\mathcal{Q}_N$.

Taking the limit $J' \to J$, the solutions of $J'$--holomorphic disks
converge to (possibly broken) $J$--holomorphic disks by the Gromov--Floer-type
compactness used in \cite{LCHIPXR}. In particular, if the set of (possibly
broken) solutions of $J$--holomorphic disks is empty for some specifications
of the punctures, then the same is true for the corresponding moduli space of
$J'$--holomorphic disks given that $J'$ is a sufficiently small perturbation
of $J$. \autoref{lemunbroken} implies that the set of broken $J$--holomorphic
disks of index zero which (after perturbing
$J$) could contribute to a term in the expression $\partial_L(c)$ all
have at least one negative puncture in $\mathcal{Q}_N$. In other words,
the $J'$--holomorphic disks that contribute to words in the expression
$\partial_L(c)$ containing no letters from $\mathcal{Q}_N$ correspond to
\emph{unbroken} $J$--holomorphic disks.

\textbf{Case $\mathbf{m} \boldsymbol{\ge} \mathbf{2}$}\quad\autoref{lemindex} implies
that there are no such unbroken disks of index zero, which shows the claim.

\textbf{Case $\mathbf{m}\boldsymbol{=}\mathbf{1}$}\quad
 \autoref{lemindex} implies that such disks of index zero
are necessarily $J$--holomorphic strips
that are constant when projected
to $P$. By \autoref{lemsign}, there are exactly two such strips, which,
moreover, are transversely cut-out solutions equipped with opposite coherent
orientations. In other words, they do not contribute to $\partial(c)$.

\section{Examples beyond augmentable Legendrians}\label{exfindimreprbnaug}
Recall that for Legendrian submanifolds whose Chekanov--Eliashberg
algebra admits an augmentation, \eqref{weekarniineqbcfgagbn}
was proven by Ekholm, Etnyre and Sullivan \cite{OILCHAELI}.
Ekholm, Etnyre and Sabloff \cite{ADESFLCH}
  later improved it to
\eqref{arnineqmrepbnbad} under the same assumptions.
In addition, given that $\Lambda$ is spin and
has a Chekanov--Eliashberg algebra admitting an augmentation into
$R=\Z$, $\Q$, $\R$
or $\Z_m$, we may take $b_i:=\mathrm{rk}\, H_i (\Lambda;R)$
in the inequality.

There are certain examples of Legendrian knots in $\C\times\R$ due to
Sivek~\cite{TCHOLKWMTBI}, which will be discussed below, whose
characteristic algebras admit $2$--dimensional representations over $\Z_2$,
but no $1$--dimensional representations over $\Z_2$. We will use these
examples to construct
Legendrian submanifolds $\Lambda \subset \C^n\times \R$ having
Chekanov--Eliashberg algebras that do not admit augmentations in any unital
commutative ring $R$, but for which there exists a finite-dimensional
representation $\rho: \MC_{\Lambda}\to \mathrm{M}_{k}(\Z_2)$ for
some $k>1$.

Recall that the \emph{Kauffman bound}
\[
\operatorname{tb}(\Lambda) \le \operatorname{min-deg}_{a} F_{\Lambda}(a,x)
- 1
\]
is satisfied for any Legendrian knot $\Lambda \subset \C \times \R$,
as shown by Rudolf \cite{ACBLP}, where $F_\Lambda(a,x)$ is the so-called
Kauffman polynomial for the underlying smooth knot of $\Lambda$. We will
need the following strong condition for the existence of an augmentation.
\begin{prop}[Rutherford \cite{TBNKPARIOALL}; Henry and Rutherford \cite{RPAAOFF}]
\label{propkauffbound}
Let $\F$ be an arbitrary field and $\Lambda \subset \C \times \R$ a Legendrian
knot. The Kauffman bound
\[
\operatorname{tb}(\Lambda) \le \operatorname{min-deg}_{a} F_{\Lambda}(a,x)
- 1
\]
is an equality if and only if the Chekanov--Eliashberg algebra of $\Lambda$
defined with Novikov coefficients $\F[H_1(\Lambda)]$ admits an ungraded
augmentation into $\F$.
\end{prop}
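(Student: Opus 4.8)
The plan is to route the claimed equivalence through the combinatorial notion of an (ungraded) \emph{normal ruling} of a front diagram of $\Lambda$, which serves as a bridge between the Kauffman polynomial and augmentations; this is exactly the strategy of Rutherford \cite{TBNKPARIOALL} and Henry--Rutherford \cite{RPAAOFF}. First I would recall that an ungraded normal ruling is a pairing of the strands of the front meeting at each crossing and cusp into a disjoint collection of embedded paths, subject to the normality condition at the switched crossings. Summing an appropriate power of $z$ (recording the number of switches minus the number of right cusps, up to the standard shift) over all such rulings produces the ungraded ruling polynomial $R^{2}_{\Lambda}(z)$.

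The first main step is Rutherford's theorem: after the substitution relating the Kauffman variable $x$ to $z$, the lowest-$a$-degree part of $F_{\Lambda}(a,x)$ coincides with $R^{2}_{\Lambda}(z)$. Consequently the coefficient of $a^{\operatorname{tb}(\Lambda)+1}$ in $F_{\Lambda}$ is, up to this substitution, equal to $R^{2}_{\Lambda}(z)$; since the Kauffman bound already guarantees $\operatorname{min-deg}_{a}F_{\Lambda}\ge\operatorname{tb}(\Lambda)+1$, equality holds if and only if this coefficient is nonzero, i.e.\ if and only if $R^{2}_{\Lambda}\neq 0$, i.e.\ if and only if $\Lambda$ admits at least one ungraded normal ruling. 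This converts sharpness of the Kauffman bound into the purely combinatorial statement that a ruling exists. I would prove this step by an induction on the Kauffman skein relation, checking that the behavior of normal rulings under the elementary front moves and crossing resolutions matches the recursion satisfied by the polynomial.

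The second main step is to identify ungraded normal rulings with ungraded augmentations $\varepsilon\colon(\mathcal{A}(\Lambda),\partial)\to\F$ of the Chekanov--Eliashberg algebra taken with Novikov coefficients $\F[H_{1}(\Lambda)]$. Over $\Z_{2}$ this is the Fuchs--Ishkhanov/Sabloff correspondence: a ruling prescribes which generators an augmentation sends to $1$, the normality conditions are precisely what force $\varepsilon\circ\partial=0$, and conversely an augmentation yields a ruling after reducing the differential to a normal form on a suitably dipped diagram. To upgrade this to an arbitrary field $\F$ and to incorporate the group-ring coefficients, I would follow Henry--Rutherford \cite{RPAAOFF} and pass to the Morse complex sequence model, showing that the weighted count of augmentations into $\F$ is a specialization of the ruling polynomial; in particular an augmentation exists if and only if a normal ruling does.

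The main obstacle is this second step over a general field $\F$ rather than $\Z_{2}$: one must control both the signs appearing in $\partial$ and the coefficients in $\F[H_{1}(\Lambda)]$ so that the combinatorial ruling genuinely lifts to a unital algebra map annihilating $\partial$, and conversely that every such map is detected by a ruling. This sign-and-coefficient bookkeeping is exactly the nontrivial content supplied by the Morse-complex-sequence formalism of Henry--Rutherford, and once it is in place, chaining the two equivalences yields the proposition.
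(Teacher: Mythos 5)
Your proposal is correct and follows essentially the same route as the paper: the paper's proof simply chains the two equivalences you describe, citing Rutherford's theorem (sharpness of the Kauffman bound is equivalent to the existence of an ungraded normal ruling) and Henry--Rutherford's theorem (rulings correspond to ungraded augmentations over $\F$ with Novikov coefficients). The only difference is that the paper treats both steps as black boxes by reference, whereas you sketch their internal proofs; no further detail is needed.
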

\begin{proof}
The proof is similar to the proof of  \cite[Proposition 3.2]{TCHOLKWMTBI},
which considers the case $\F=\Z_2$. The Legendrian knot admits an ungraded
ruling if and only if the Kauffman bound is an equality, by
\cite[Theorem~3.1]{TBNKPARIOALL}, and the existence of an ungraded ruling is equivalent
to the existence of an ungraded augmentation in $\F$ by
\cite[Theorem~3.4]{RPAAOFF} (given that the Chekanov--Eliashberg algebra is defined with
Novikov coefficients).
See \cite{AAROLK} for an alternative proof of
the latter fact.
\end{proof}

\begin{rem}
Regarding higher-dimensional representations of the characteristic algebra,
the following can be said.
If the characteristic algebra $\MC_{\Lambda}$ of a Legendrian knot
$\Lambda\subset \C\times\R$ admits a finite-dimensional
representation over $\Z_2$, then $\Lambda$ maximizes the Thurston--Bennequin
invariant within its
topological knot type \cite[Theorem 1.2]{SOLKAROTCEA}. Furthermore, the existence of such a
representation depends only on $\operatorname{tb}(\Lambda)$ and the
topological
type of $\Lambda$.
\end{rem}

We first prove the following lemma, which shows how
finite-dimensional representations of characteristic algebras behave
under the operation of connected sum, as defined  by Etnyre
and Honda \cite{OCSALK}.

\begin{lem}\label{rzeconsumlegknofun}
Let $\Lambda_1$ an $\Lambda_2$ be two Legendrian knots in $\C\times\R$.
\begin{enumerate}
\item
If the characteristic algebra $\MC_{\Lambda_i}$ admits a
$k_i$--dimensional (graded) representation over a field $\F$ for
$k_i \ge 1$ and $i=1$, $2$, then  $\MC_{\Lambda_1 \# \Lambda_2}$ admits a
$k_{1}k_{2}$--dimensional
(graded) representation over $\F$.
\item
If one of $\Lambda_i$, $i=1$, $2$, satisfies the strict inequality
$$
\operatorname{tb}(\Lambda_i) < \operatorname{min-deg}_{a} F_{\Lambda_i}(a,x)
- 1
$$
for the Kauffman bound, then the Chekanov--Eliashberg algebra of
$\Lambda_1\# \Lambda_2$ does not admit an augmentation into any unital
commutative ring $R$.
\end{enumerate}
\end{lem}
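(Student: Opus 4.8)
The plan is to analyze the Chekanov--Eliashberg algebra of a connected sum $\Lambda_1 \# \Lambda_2$ in terms of the algebras of the summands. The key structural input from Etnyre and Honda's connected-sum construction \cite{OCSALK} is that one may arrange a single new Reeb chord $c$ of degree $0$ that connects the two knots, while all other chords belong to one summand or the other. More precisely, I expect that $\A(\Lambda_1 \# \Lambda_2)$ is generated by $\mathcal{Q}(\Lambda_1) \sqcup \mathcal{Q}(\Lambda_2) \sqcup \{c\}$, with a differential that restricts to $\partial_{\Lambda_1}$ and $\partial_{\Lambda_2}$ on the respective subalgebras, and with $\partial c$ expressible in terms of the generators of $\Lambda_1$ and $\Lambda_2$ (up to the precise normalization used in \cite{OCSALK}). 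The first step is therefore to record this description of the differential carefully, as it governs both parts of the lemma through the induced structure of the characteristic algebra $\MC_{\Lambda_1 \# \Lambda_2} = \A(\Lambda_1 \# \Lambda_2)/I$.

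For part~(1), the plan is to build the desired $k_1 k_2$--dimensional representation explicitly. Given representations $\rho_i \co \MC_{\Lambda_i} \to \mathrm{M}_{k_i}(\F)$, I would tensor them up to act on $\F^{k_1} \otimes \F^{k_2} \simeq \F^{k_1 k_2}$, sending a generator $a \in \mathcal{Q}(\Lambda_1)$ to $\rho_1(a) \otimes \id_{k_2}$ and a generator $b \in \mathcal{Q}(\Lambda_2)$ to $\id_{k_1} \otimes \rho_2(b)$; the extra chord $c$ can be sent to a convenient fixed matrix (for instance $\id_{k_1 k_2}$, or whatever value is forced by the relation $\partial c = 0$ in $\MC$). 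The verification amounts to checking that this assignment kills the image of $\partial$, which reduces to the already-established facts that $\rho_i \circ \partial_{\Lambda_i} = 0$ together with the explicit form of $\partial c$; the grading claim follows since $\id$ is concentrated in degree $0$ and tensoring preserves the grading of each factor.

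For part~(2), the plan is to argue by contradiction using the Kauffman-bound criterion of \autoref{propkauffbound}. Suppose $\Lambda_1 \# \Lambda_2$ admitted an augmentation $\varepsilon$ into a unital commutative ring $R$. Restricting $\varepsilon$ along the subalgebra inclusions should yield augmentations of $\A(\Lambda_i)$ into $R$, and hence — after passing to a residue field of $R$ (which exists since $R$ is unital commutative and nonzero) — an ungraded augmentation of each $\A(\Lambda_i)$ into a field. By \autoref{propkauffbound} this forces equality in the Kauffman bound for each summand, contradicting the assumed strict inequality for one of them. The main obstacle I anticipate is the restriction step: I must confirm that the connected-sum differential genuinely restricts to $\partial_{\Lambda_i}$ on each subalgebra (so that an augmentation of the whole algebra really does restrict to an augmentation of each piece), and that the presence of the connecting chord $c$ does not obstruct this restriction. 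Handling the passage from $R$ to a field is routine — one quotients by a maximal ideal — so the crux is the precise algebraic behavior of the differential under the Etnyre--Honda connected sum.
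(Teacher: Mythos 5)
There is a genuine gap: both halves of your argument rest on an explicit structural description of $\A(\Lambda_1\#\Lambda_2)$ --- generators $\MQ(\Lambda_1)\sqcup\MQ(\Lambda_2)\sqcup\{c\}$, differential restricting to $\partial_{\Lambda_i}$ on each subalgebra --- which you never establish and which is not what Etnyre--Honda's construction gives you. In the standard front-projection connected sum a right cusp of one summand is consumed by a left cusp of the other, so the chord count is $|\MQ(\Lambda_1)|+|\MQ(\Lambda_2)|-1$ rather than ``$+1$ connecting chord,'' and there is no a priori reason that a holomorphic disk with positive puncture at a chord of $\Lambda_1$ cannot pass through the connect-sum region and pick up negative punctures at chords of $\Lambda_2$. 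Consequently neither the ``extra chord $c$'' in part (1) nor, more seriously, the restriction step in part (2) (restricting a hypothetical augmentation of $\A(\Lambda_1\#\Lambda_2)$ to augmentations of each $\A(\Lambda_i)$) is justified; the known DGA functoriality in fact points in the opposite direction. You correctly identified this as the main obstacle, but it is the crux of the proof, not a technical detail to be checked later.

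The paper avoids the issue entirely. For part (1) it invokes the exact Lagrangian cobordism from $\Lambda_1\sqcup\Lambda_2$ to $\Lambda_1\#\Lambda_2$, whose induced unital algebra morphism $f_L\co\MC_{\Lambda_1\#\Lambda_2}\to\MC_{\Lambda_1}\otimes\MC_{\Lambda_2}$ is exactly the map along which one pulls back $\rho_1\otimes\rho_2$; your tensor-product representation is the right target, but the morphism comes from cobordism functoriality, not from a presentation of the connected-sum DGA. For part (2) the paper applies \autoref{propkauffbound} directly to $\Lambda_1\#\Lambda_2$ rather than to the summands: the additivity $\operatorname{tb}(\Lambda_1\#\Lambda_2)=\operatorname{tb}(\Lambda_1)+\operatorname{tb}(\Lambda_2)+1$ together with the additivity of $\operatorname{min-deg}_a F$ under connected sum shows that the Kauffman bound remains a strict inequality for $\Lambda_1\#\Lambda_2$, which rules out any augmentation of its Chekanov--Eliashberg algebra. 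Your reduction from a unital commutative ring $R$ to a field via a maximal ideal does match the paper and is fine. If you want to salvage your outline, replace the presumed sub-DGA structure with the cobordism map in (1) and with the additivity of $\operatorname{tb}$ and of the Kauffman polynomial degree in (2).
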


\begin{proof}
We first prove $(1)$.
Observe that there is an exact Lagrangian cobordism~$L$ from
$\Lambda_1\sqcup \Lambda_2$ to $\Lambda_{1}\# \Lambda_{2}$ by
eg~\cite{LASALCN} or \cite{LKAELC}. It follows that there exists a unital
algebra morphism $f_{L}\co
\MC_{\Lambda_{1}\# \Lambda_{2}}\to \MC_{\Lambda_{1}}\otimes
\MC_{\Lambda_{2}}$. If $\rho_{i}\co \MC_{\Lambda_{i}} \to
\mathrm{M}_{k_i}(\F)$ is a (graded) $k_i$--dimensional representation
of $\MC_{\Lambda_{i}}$, then the pullback of the tensor product of
(graded) representations $\rho_1$ and $\rho_2$
\begin{align*}
(f_L)^{\ast}(\rho_1\otimes \rho_2)\co \MC_{\Lambda_{1}\#  \Lambda_{2}}
\to \mathrm{M}_{k_{1}k_{2}}(\F)
\end{align*}
is a (graded) $k_{1}k_{2}$--dimensional representation of
$\MC_{\Lambda_{1}\# \Lambda_2}$.

We then prove $(2)$. First, observe that for any unital commutative ring $R$
there is a maximal ideal $m \subset R$. Composing the augmentation to $R$
with the quotient projection $R\to R/m$ thus induces an augmentation into the
field $R/m$. It thus suffices to show the statement when $R=\F$ is a field.

Without loss of
generality, we assume that the Kauffman bound for $\Lambda_1$ is not an
equality, ie that
\begin{align*}
\operatorname{tb}(\Lambda_1) < \operatorname{min-deg}_{a} F_{\Lambda_1}(a,x)
- 1.
\end{align*}
In addition,
$\operatorname{tb}(\Lambda_1\# \Lambda_2)=\operatorname{tb}(\Lambda_1)+\operatorname{tb}(\Lambda_2)+1$ \cite[Lemma 3.3]{OCSALK}.
Hence, using the Kauffman bounds for $\operatorname{tb}(\Lambda_1)$ and
$\operatorname{tb}(\Lambda_2)$, we get that
\begin{align*}
\operatorname{tb}(\Lambda_1\#\Lambda_2)=\operatorname{tb}(\Lambda_1)+\operatorname{tb}(\Lambda_2)+1<\operatorname{min-deg}_{a}
F_{\Lambda_1}(a,x)+\operatorname{min-deg}_{a} F_{\Lambda_2}(a,x)-1.
\end{align*}
Further, as shown in \cite{OKPOL}, we have the equality
\begin{align*}
\operatorname{min-deg}_{a} F_{\Lambda_1\#
\Lambda_2}(a,x)=\operatorname{min-deg}_{a}
F_{\Lambda_1}(a,x)+ \operatorname{min-deg}_{a} F_{\Lambda_2}(a,x),
\end{align*}
and hence we see that the Kauffman bound
\begin{align*}
\operatorname{tb}(\Lambda_1\#  \Lambda_2) < \operatorname{min-deg}_{a}
F_{\Lambda_1\#
\Lambda_2}(a,x)  - 1
\end{align*}
for $\Lambda_1\#  \Lambda_2$ is not an equality either. Consequently,
\autoref{propkauffbound} shows that the Chekanov--Eliashberg algebra of
$\Lambda_1\#  \Lambda_2$ defined using Novikov coefficients does not admit an
augmentation into $\F$. In particular, it follows that its Chekanov--Eliashberg
algebra defined without Novikov coefficients does not admit an augmentation
either.
\end{proof}

\begin{figure}[t]
\center
\includegraphics[width=5cm]{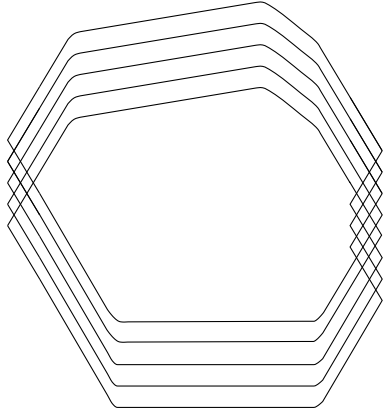}
\caption{The front projection of $\Lambda_{5,-8}$}
\label{stsrxfodrepbig}
\end{figure}

\begin{example}\label{firstfoundstfdrfro}
Sivek
 \cite{TCHOLKWMTBI} considered a Legendrian knot
$\Lambda_{p,-q}$, which is a Legendrian representative of
$(p,-q)$--torus knot with $p\geq 3$ odd and $q>p$. The front
projection of $\Lambda_{5,-8}$ is shown in
\autoref{stsrxfodrepbig}. Sivek proved that the characteristic
algebra $\MC_{\Lambda_{p,-q}}$ admits a representation $\rho\co
\MC_{\Lambda_{p,-q}}\to \mathrm{M}_{k}(\Z_2)$ for $k=2$ but not for
$k=1$; see~\cite{TCHOLKWMTBI}. In addition, observe that the proof
that $\MC_{\Lambda_{p,-q}}$ does not admit a $1$--dimensional
representation over $\Z_2$ is based on the fact that the Kauffman bound of
$\Lambda_{p,-q}$ is a strict inequality
$-pq = \operatorname{tb}(\Lambda_{p,-q})< -pq + q - p$; see
\cite{OTIOLMTL,TCHOLKWMTBI}.
Hence, using the same argument as in the proof of
\autoref{rzeconsumlegknofun}, one gets that $\MC_{\Lambda_{p,-q}}$ does
not admit an augmentation into a unital commutative ring. Observe that
$r(\Lambda_{p,-q})=q-p$ (if the orientation is such
that the cusps in the left part of \autoref{stsrxfodrepbig} are
downward cusps). Since we will use it in the proof of
\autoref{applsptwtherffdrbnaugalk} below, observe that the Maslov
class of $\Lambda_{p,-q}$ is given by $\mu(\Lambda_{p,-q})=2(q-p)$.
\end{example}

We
now discuss another family of examples.

\begin{figure}[b]
\labellist\small
 \pinlabel $a_1$ at 330 345 \pinlabel $a_2$ at 330 310
\pinlabel $a_3$ at 330 275 \pinlabel $a_{2k}$ at 330 60 \pinlabel
$a_{2k+1}$ at 343 25
\endlabellist
\includegraphics[width=6cm]{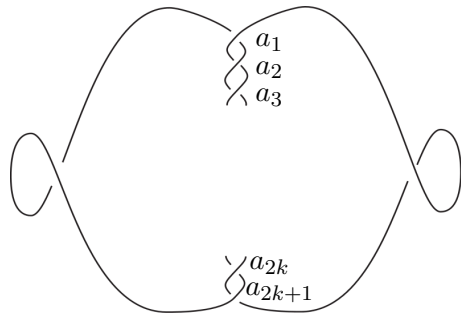}
\caption{The Lagrangian projection of $T_{2,k}$} \label{Tanoldbfny}
\vskip-2.5pt
\end{figure}

\begin{example}\label{ecexampleconsmnbnto}
Observe that from \autoref{firstfoundstfdrfro} and
\autoref{rzeconsumlegknofun} we can get many other examples of
Legendrian knots $\Lambda$ whose characteristic algebras admit
finite-dimensional representations over $\Z_2$, but whose
Chekanov--Eliashberg algebras do not admit augmentations in any
unital commutative ring $R$. Say, consider $\Lambda_{p,-q}\#
T_{2,k}$, where $T_{2,k}$ is a Legendrian representative of torus
$(2,k)$--knot -- see \autoref{Tanoldbfny} -- where $p\geq 3$
is odd and
fixed, $k\in \N$ is fixed and $q>p$. Since
the Chekanov--Eliashberg
algebra of $T_{2,k}$ admits an augmentation into $\Z_2$-- see
\cite{LKAELC}-- its characteristic algebra has a $1$--dimensional
representation over $\Z_2$. Then we apply
\autoref{rzeconsumlegknofun} and see that $\Lambda_{p,-q}\#
T_{2,k}$ satisfies the sought properties. In addition, since
$r(T_{2,k})=0$, observe that $r(\Lambda_{p,-q}\#  T_{2,k})=q-p$ and
one can find infinitely many such knots which are pairwise not
Legendrian isotopic. We also note that the Maslov class
$\mu(\Lambda_{p,-q}\#  T_{2,k})$ equals $2(q-p)$.
\end{example}

\subsection{Proof of \texorpdfstring{\autoref{applsptwtherffdrbnaugalk}}{Theorem 1.9}}
Consider a family of Legendrian knots $(\widetilde{\Lambda}_l)_{l\in
\N}$ satisfying the following three properties:
\begin{enumerate}
\item $\mu(\widetilde{\Lambda}_{l_1})\neq \mu(\widetilde{\Lambda}_{l_2})$
whenever $l_1\neq l_2$ in $\N$.
\item $\A(\widetilde{\Lambda}_l)$ does not admit an augmentation to
any unital commutative ring $R$.
\item $\MC_{\widetilde{\Lambda}_l}$ admits a finite-dimensional
representation $\rho_{l}\co \MC_{\widetilde{\Lambda}_l}\to \mathrm M_{k}(\Z_2)$,
$k>1$.
\end{enumerate}

We can use a family $\widetilde{\Lambda}_{l}:=\Lambda_{p,-p-l}$,
 described in \autoref{firstfoundstfdrfro},
 with fixed $p\geq 3$
odd and $l\in \N$, or a family
$\widetilde{\Lambda}_{l}:=\Lambda_{p,-p-l}\#  T_{2,k}$,
described in \autoref{ecexampleconsmnbnto},
 where $l\in \N$
and
we have fixed $p\geq 3$ odd and $k\in \N$. We also
observe that any family of Legendrian knots $(\Lambda'_k)_{k\in \N}$
such that $\A(\Lambda'_k)$ admits an augmentation in $\Z_2$ and
$\mu(\Lambda'_{k_1})=\mu(\Lambda'_{k_2})$ for all $k_1$, $k_2\in \N$
will lead to a family $(\Lambda_l)_{l\in \N}$, where
$\widetilde{\Lambda}_{l}:=\Lambda_{p,-p-l}\#  \Lambda'_{k}$, which satisfies
the sought properties.

We now define
$\Lambda_{l}:=\Sigma_{S^{m_s}}\cdots\Sigma_{S^{m_1}}\widetilde{\Lambda}_{l}$.
Observe that, from property (1) and the result of Lambert-Cole
\cite{LP}, it follows that $\mu(\Lambda_{l_1})\neq
\mu(\Lambda_{l_2})$ for all $l_1\neq l_2$ in $\N$. Therefore, the
$\Lambda_{l}$ are pairwise not Legendrian isotopic. From the first
two parts of \autoref{cormrftspgdsy} and properties (2) and (3) it
follows that $\MC_{\Lambda_l}$ admits a finite-dimensional
representation $\rho_{l}\co \MC_{\Lambda_l}\to \mathrm M_{k}(\Z_2)$, $k>1$,
but that $\A(\Lambda_l)$ does not admit an augmentation to any unital
commutative ring $R$. Finally, we see that
\autoref{mainthartinchlocbig} is applicable to $\Lambda_{l}$.
This finishes the proof.

\section{Limitations of the argument}\label{arglimsecttomonbg}
Note that our proof of \autoref{mainthartinchlocbig} is based on
the fact that if $\MC_{\Lambda}$ admits a $k$--dimensional
representation $\rho$, then we can reduce the
$\MC_{\Lambda}$--$\MC_{\Lambda'}$--bimodule $\MC(\Lambda,\Lambda')$ to
the $\mathrm{M}_k(\F)$--$\mathrm{M}_k(\F)$--bimodule
$\MC_{\rho}(\Lambda,\Lambda')$, which leads to rank inequality
\eqref{mineqanofthernulgtr} (which in the simplest situation of a
short exact sequence of finitely generated abelian groups is
equivalent to an application of the rank--nullity theorem). In order
to have this type of inequality for ranks of bimodules, the minimal
imaginable requirement on $\MC_{\Lambda}\otimes \MC_{\Lambda'}^{\mathrm{op}}$
is to have
a well-defined notion of rank for free left
$\MC_{\Lambda}\otimes \MC_{\Lambda'}^{\mathrm{op}}$--modules (the so-called
IBN property) which, moreover, satisfies the property that the rank
of a free module cannot be exceeded by the rank of a free submodule
(the so-called strong rank property).

\begin{rem}\label{rkproptostrrankpropnatbnvintrghsk}

Assume that we are given a Legendrian submanifold $\Lambda\subset P\times
\R$ for which $\MC_{\Lambda}$ does not satisfy the rank property.
From
the fact that there exists a natural unital homomorphism
$\MC_{\Lambda}\to \MC_{\Lambda}\otimes \MC_{\Lambda'}^{\mathrm{op}}$ and
\autoref{rhomoibnrcondpres} it follows that $\MC_{\Lambda}\otimes
\MC_{\Lambda'}^{\mathrm{op}}$ does not satisfy the rank property
either. Hence, using \autoref{genpictimplibnrpstrrkpr} it follows that
$\MC_{\Lambda}\otimes
\MC_{\Lambda'}^{\mathrm{op}}$ in particular does not satisfy the strong
rank property.
\end{rem}

We now proceed to prove the following simple fact:

\begin{fact}\label{factnonrprop}
If R is a unital ring for which there exist elements $x$, $y$, $p$, $q\in R$
satisfying
$$
p(1-yx)q=1\quad  \mbox{and}\quad xy=1,
$$
then $R$ does not satisfy the rank property.
\end{fact}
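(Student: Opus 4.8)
The plan is to invoke the criterion from \autoref{nonibnpnontrp}: to show that $R$ fails the rank property it suffices to exhibit natural numbers $m>n$ together with matrices $A$ of size $m\times n$ and $B$ of size $n\times m$ such that $AB=I_m$. I will treat the smallest possible case, namely $m=2$ and $n=1$, so that $A$ is a column vector with two entries and $B$ is a row vector with two entries, and the target identity $AB=I_2$ asks for a product of a $2\times 1$ and a $1\times 2$ matrix to equal the identity. This is impossible over a field, where such a product always has rank at most one, but it is perfectly possible over a non-commutative ring.

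The starting observation is that $e:=1-yx$ is idempotent: from $xy=1$ one computes $yx\,yx=y(xy)x=yx$, whence $e^2=e$. The same relation $xy=1$ yields the two annihilation identities $xe=x-xyx=x-x=0$ and $ey=y-yxy=y-y=0$. These are precisely the relations that will force the off-diagonal entries of $AB$ to vanish, while the hypothesis $p\,e\,q=1$ supplies the lower-right diagonal entry. Concretely, I would set
\[
A=\begin{pmatrix} x \\ p(1-yx) \end{pmatrix},
\qquad
B=\bigl(\,y,\ (1-yx)q\,\bigr).
\]

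It then remains to verify $AB=I_2$ by computing the four entries of the product. The $(1,1)$ entry is $xy=1$; the $(2,2)$ entry is $p(1-yx)(1-yx)q=p(1-yx)q=1$, using the idempotency of $e$ together with the hypothesis; and the two off-diagonal entries are $x(1-yx)q=(xe)q=0$ and $p(1-yx)y=p(ey)=0$ by the annihilation identities above. Thus $AB=I_2$ with $2>1$, and \autoref{nonibnpnontrp} shows that $R$ does not satisfy the rank property. The only genuine content is the choice of $A$ and $B$, so the step I expect to be the real point (rather than an obstacle) is this guess: the one-sided inverse relation $xy=1$ provides one ``unit'' $xy$ on the diagonal, and the idempotent $1-yx$ (necessarily nonzero, since $p(1-yx)q=1\neq 0$) provides, after conjugating by $p$ and $q$, a second independent unit, with $x(1-yx)=0$ and $(1-yx)y=0$ guaranteeing that the two do not interfere. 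Once $A$ and $B$ are written down, the verification is entirely routine.
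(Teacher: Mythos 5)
Your proposal is correct and coincides with the paper's own proof: the same $2\times 1$ and $1\times 2$ matrices $\bigl(x,\ p(1-yx)\bigr)^{T}$ and $\bigl(y,\ (1-yx)q\bigr)$ are used, and the conclusion follows from the same criterion (\autoref{nonibnpnontrp}). Your verification via the idempotency of $1-yx$ and the identities $x(1-yx)=0$, $(1-yx)y=0$ is just a slightly more structured way of carrying out the entrywise expansion the paper performs directly.
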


\begin{proof}\
Consider a column vector $(x, p(1-yx))^{T}$ and a row vector
$(y,(1-yx)q)$. It is easy to see that
\[\let\bmatrix\pmatrix
\let\endbmatrix\endpmatrix
\begin{bmatrix}
    x\\
    p(1-yx) \\
\end{bmatrix} \cdot
\begin{bmatrix}
    y & (1-yx)q \\
\end{bmatrix}
=
\begin{bmatrix}
    xy & (x - xyx)q  \\
    p(y - yxy) & p(1-2yx + yxyx)q \\
\end{bmatrix} =
\begin{bmatrix}
    1 & 0  \\
    0 & 1 \\
\end{bmatrix},
\]
and hence using \autoref{nonibnpnontrp} we get that $R$ does not
satisfy the rank property.
\end{proof}

We
now discuss an example of a Legendrian knot $\Lambda\subset
\C\times \R$ for which $\MC_{\Lambda}$ does not satisfy the rank
property.

\begin{figure}[t]
\includegraphics[width=9cm]{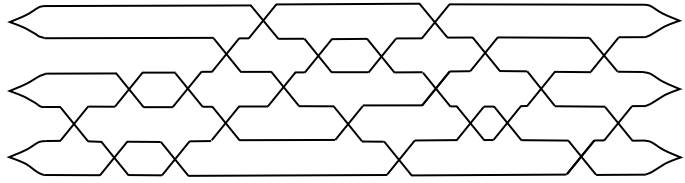}
\caption{The front projection of $\Lambda_{m(10_{132})}$ defined by
the braid words $4, 5, 3, 5, 3, 2, 4, 1, 3, 2, 4, 2, 5, 1, 3, 2, 4,
4, 3, 5, 4, 2$; cf  \protect\cite[Figure 2]{TCHOLKWMTBI}}
\label{monzeonthtw}
\end{figure}

Consider the Legendrian representative $\Lambda_{m(10_{132})}$ of
$m(10_{132})$ shown in \autoref{monzeonthtw}. We recall some facts
about its characteristic algebra $\MC_{\Lambda_{m(10_{132})}}
=\Z_2\langle x_1,\ldots,x_{25} \rangle/B$, using the notation of
\cite{TCHOLKWMTBI}, where $B$ is a two-sided ideal generated by
$\{\partial x_{1},\dots,\partial x_{25} \}$. Setting
$x:=1+x_{5}(x_{2} + x_{3})$ and $y:=x_{20}$, Sivek
\cite{TCHOLKWMTBI} showed that $xy=1$ and that the two-sided ideal
$I
\subset \mathcal{C}_{\Lambda_{m(10_{132})}}$ generated
by $1-yx$ coincides with $\MC_{\Lambda_{m(10_{132})}}$. Even more
can be said: taking $p:=x_{13} + x_{8}(x_{2}+x_{3})$ and $q:=x_{18}$,
we moreover have $p(1-yx)q=1$. This follows from the formulas
$$
\partial x_{22}=xq
\:\:\text{and}\:\:
\partial x_{23}=1+x_{11}x_{22}+pq
$$
in \cite{TCHOLKWMTBI}, taking into count that $x_{11}$ vanishes
in $\MC_{\Lambda_{m(10_{132})}}$ (as computed in
\cite[Section~2.2]{TCHOLKWMTBI}).
Hence, using \autoref{factnonrprop}, we see that
$\MC_{\Lambda_{m(10_{132})}}$
does not satisfy the rank property.

Note that it follows from \autoref{rkproptostrrankpropnatbnvintrghsk}
that $\MC_{\Lambda}\otimes \MC_{\Lambda'}^{\mathrm{op}}$ does not
satisfy the strong rank property for
$\Lambda=\Lambda_{m(10_{132})}$ either.

\begin{proof}[Proof of \autoref{hdbexwithoutthestrrnkpr}]
Assume that we are given a Legendrian knot $\widetilde{\Lambda}\subset
\C\times\R$ for which $\MC_{\widetilde{\Lambda}}$ does not satisfy the
rank property.
For example, $\widetilde{\Lambda}=\Lambda_{m(10_{132})}$.

We then use the spherical front spinning construction. For a given $n\in
\N$, consider $\Sigma_{S^{m_s}}\cdots\Sigma_{S^{m_1}}\widetilde{\Lambda}$,
where $\sum_{i=1}^{s}m_i+1=n$.
Observe that from \autoref{maininclinviig} it follows that there
is a homomorphism of unital algebras from $\MC_{\widetilde{\Lambda}}$ to
$\MC_{\Sigma_{S^{m_s}}\cdots\Sigma_{S^{m_1}}\widetilde{\Lambda}}$ given by
the composition
of homomorphisms

$$
\MC_{\widetilde{\Lambda}}\to
\MC_{\Sigma_{S^{m_1}}\widetilde{\Lambda}}\to\dots\to
\MC_{\Sigma_{S^{m_s}}\dots\Sigma_{S^{m_1}}\widetilde{\Lambda}}.
$$
Since $\MC_{\widetilde{\Lambda}}$ does not satisfy the
rank property, it follows from \autoref{rhomoibnrcondpres} that neither does
$\MC_{\Sigma_{S^{m_s}}\cdots\Sigma_{S^{m_1}}\widetilde{\Lambda}}$.
We define $\Lambda$ to be
$\Sigma_{S^{m_s}}\cdots\Sigma_{S^{m_1}}\widetilde{\Lambda}$.

We also observe that \autoref{rkproptostrrankpropnatbnvintrghsk} implies
that $\MC_{\Lambda}\otimes
\MC_{\Lambda'}^{\mathrm{op}}$ does not satisfy the strong rank property.
This finishes the proof.
\end{proof}

\begin{rem}
If there exists an exact Lagrangian cobordism $L$ from $\Lambda_{-}$
to $\Lambda_{+}$ and
$\MC_{\Lambda_+}$ does not satisfy the
rank property, then the same is true for $\MC_{\Lambda_{-}}$. Recall
that $L$ induces a unital DGA-morphism $\mathcal{A}(\Lambda_{+}) \to
\mathcal{A}(\Lambda_{-})$ and hence a unital algebra morphism
$\MC_{\Lambda_{+}}\to \MC_{\Lambda_{-}}$.
\autoref{rhomoibnrcondpres} now implies the claim.
\end{rem}

\begin{rem}
Assume that we are given a Legendrian submanifold $\Lambda\subset P\times
\R$ for which
$\MC_{\Lambda}$ does not satisfy the rank property.
It follows from
\autoref{rhomoibnrcondpres} that, even if we try to
reduce $\MC(\Lambda,\Lambda')$ to another bimodule by using a
homomorphism $\MC_{\Lambda}\to S$ (where $S$ is a unital ring), we will
necessarily obtain a free bimodule over a ring which does not satisfy the
rank property.
\vadjust{\goodbreak}
\end{rem}

\section{Proof of \texorpdfstring{\autoref{aprbpundalgprfbwhkn}}{Proposition 1.12}}

We start by showing that there must exist at least three Reeb chords on a
horizontally displaceable Legendrian submanifold whose Chekanov--Eliashberg
algebra is not acyclic, but whose characteristic algebra admits no
finite-dimensional representations. In particular, we prove that the bound
$|\MQ(\Lambda)|<3$ contradicts the hypothesis.

In the case when $|\MQ(\Lambda)|=1$, the differential must be trivial,
and hence the characteristic
algebra is free and obviously has a finite-dimensional
representation.

In the case when $|\MQ(\Lambda)|=2$, the characteristic algebra is
the quotient of the Chekanov--Eliashberg algebra $\langle
a,b\rangle$, $\ell(a) \le \ell(b)$, by the two-sided ideal generated
by a polynomial $p(a)$ (which is not a unit, by assumption). In other
words, the characteristic algebra is the free product $\F[b]  *
\F[a]/\langle p(a)\rangle$, which, hence, admits a unital projection
\[
\F[b]  * \F[a]/\langle p(a)\rangle \to \F[b]  * \F[a]/\langle p(a),b\rangle
= \F[a]/\langle p(a)\rangle
\]
to a non-zero commutative unital algebra. Finally, observe that
$\F[a]/\langle p(a)\rangle$ has a $1$--dimensional representation
$\F[a]/\langle p(a)\rangle\to \bigl(\F[a]/\langle p(a) \rangle\bigr)/I$, where $I$
is any maximal ideal of the commutative ring $\F[a]/\langle p(a)\rangle$.

\begin{rem}
At least algebraically, there are examples of free DGAs with
three generators for which the homology is not acyclic but for which the
characteristic algebra admits no finite-dimensional representations. Consider
the DGA generated by $\langle a,b,c\rangle$ over a field of characteristic
zero for which $\partial(a)=\partial(b)=0$ and $\partial(c)=1-(ab-ba)$. A
finite-dimensional unital representation of the characteristic algebra must
send $ab-ba$ to the identity. Since the trace of a commutator vanishes, while
the trace of the identity is non-zero, such a representation cannot exist.
\end{rem}

We
now prove \autoref{aprbpundalgprfbwhkn} in case (1). Let $c_{\mathrm{even}}$ and $c_{\mathrm{odd}}$ denote
the number of generators in even and odd degree, respectively. Using
\cite[Proposition 3.3]{NILSIRTNPO} we can express the Thurston--Bennequin
number as
\[
\operatorname{tb}(\Lambda)=(-1)^{(n-2)(n-1)/2}(c_{\mathrm{even}}-c_{\mathrm{odd}})=(-1)^{k+1}(c_{\mathrm{even}}-c_{\mathrm{odd}}).
\]
Combining this with the identity
\[
\operatorname{tb}(\Lambda)=(-1)^{k+1}\tfrac{1}{2}\chi(\Lambda)
\]
from \cite[Proposition 3.2(2)]{NILSIRTNPO}, we conclude that
\[
c_{\mathrm{even}}=\tfrac{1}{2}|\chi(\Lambda)|+c_{\mathrm{odd}}
\]
holds under the additional assumption that $\chi(\Lambda) \ge 0$. Moreover,
the equality $c_{\mathrm{odd}}=0$ implies that the differential is trivial
and, hence, that there is a canonical (graded) augmentation. We must
therefore have $c_{\mathrm{odd}} \ge 1$, from which it follows that
$$
|\mathcal Q(\Lambda)|=c_{\mathrm{odd}}+c_{\mathrm{even}}=
\tfrac{1}{2}|\chi(\Lambda)|+2c_{\mathrm{odd}}\geq
\tfrac{1}{2}|\chi(\Lambda)|+2.
$$

The proof
in case (2) is analogous. It suffices to consider the case
$\chi(\Lambda)< 0$, for which the above expressions yield
\[
c_{\mathrm{odd}}=\tfrac{1}{2}|\chi(\Lambda)|+c_{\mathrm{even}}.
\]
The assumption that the Maslov class is non-vanishing, ie that the
Chekanov--Eliashberg algebra admits an integer-valued grading, together
with the assumption that all generators are of non-negative degree, has
the following strong implication. Given that $c_{\mathrm{even}}=0$, the
Chekanov--Eliashberg algebra actually vanishes in degree zero, from which
the existence of an augmentation follows (using the assumption that the DGA
is not acyclic). We must thus have $c_{\mathrm{even}} \ge 1$, from which
we obtain the sought inequality
$$
|\mathcal Q(\Lambda)|=c_{\mathrm{odd}}+c_{\mathrm{even}}=
\tfrac{1}{2}|\chi(\Lambda)|+2c_{\mathrm{even}}\geq
\tfrac{1}{2}|\chi(\Lambda)|+2.
$$

\end{document}